    \def\cD{\mathcal D} \def\cE{\mathcal E}               \def\cS{\mathcal S}    
\def\sF{\mathscr F}      
\def\sA{\mathscr A}
\def\fw{\mathfrak{w}} 
\def\fm{\mathfrak{m}}
\def\fp{\mathfrak{p}}
\def\ff{\mathfrak{f}}
\def\fa{\mathfrak{a}}
\def\fp{\mathfrak{p}}
 \def\fn{\mathfrak{n}}
\def\Z{{\mathbb Z}} \def\R{{\mathbb R}} \def\F{{\mathbb F}} \def\N{{\mathrm N}}  \def\Q{{\mathbb Q}}
\newcommand{\SF}{\text{ sqf}}
\def \ring {\Z[\omega]}
\newcommand{\kommentar}[1]{}
\DeclareMathOperator{\tr}{Tr}
\DeclareMathOperator{\res}{Res}
\DeclareMathOperator{\re}{Re}
\DeclareMathOperator{\cond}{cond}
\def \e{e}%\mathbf{e}}
\def\ef{\cE_{\sF_3}}
\def\eft{\cE_{\sF_3'}} %EF for thin family
\renewcommand{\mod}[1]{\,{\rm mod}\,#1}
\def\su#1{\sum_{\substack{#1}}}
\def\pr#1{\prod_{\substack{#1}}}
\def\bs#1{\begin{equation*} \begin{split} #1 \end{split} \end{equation*}}
\def\bsc#1{\begin{equation} \begin{split} #1 \end{split} \end{equation}}
\def\eqs#1{\begin{equation*} #1 \end{equation*}}
\def\eqn#1{\begin{equation} #1 \end{equation}}
\def\mult#1{\begin{multline*}#1\end{multline*}}
\def\multn#1{\begin{multline}#1\end{multline}}
\def\ar#1{\left\{ \begin{array}{l@{\quad\text{if }}l} #1 \end{array}\right.
}
\def\({\left(} \def\){\right)} \def\[{\left[} \def\]{\right]} 
\def\le{\leqslant} \def\ge{\geqslant}
\def\eps{{\varepsilon}}
\definecolor{pink}{rgb}{1,.2,.6}
\definecolor{orange}{rgb}{0.7,0.3,0}
\definecolor{blue}{rgb}{.2,.6,.75}
\definecolor{green}{rgb}{.4,.7,.4}
\definecolor{purple}{RGB}{127,0,255}
\newcommand{\ccom}[1]{{\color{pink}{CD: #1}} }
\newtheorem{lem}{Lemma}[section]
\newtheorem{prop}[lem]{Proposition}
\newtheorem{thm}[lem]{Theorem}
\newtheorem{cor}[lem]{Corollary}
\newtheorem{conjecture}[lem]{Conjecture}
\theoremstyle{definition}
\newtheorem{rem}[lem]{Remark}
\newcommand{\phat}{\widehat{\phi}}
\begin{document}

\author{Chantal David}
\address{Department of Mathematics and Statistics, Concordia University, 1455 de Maisonneuve West, Montr\'eal, Qu\'ebec, Canada H3G 1M8}
\email{chantal.david@concordia.ca}

\author{Ahmet M. G\"ulo\u{g}lu  }
\address{Department of Mathematics, Bilkent University, Ankara, Turkey}
\email{guloglua@fen.bilkent.edu.tr}
\keywords{One-level density, low-lying zeros, non-vanishing, cubic Dirichlet characters, cubic Gauss sums, Hecke L-functions.}

\thanks{Both authors are supported by T\"UB\.ITAK Research Grant no. 119F413}

\title[One-level density and non-vanishing  for cubic $L$-functions]{One-level density and non-vanishing  for cubic $L$-functions over the Eisenstein field}

\maketitle

\begin{abstract}
We study the one-level density for families of L-functions associated with  cubic Dirichlet characters defined over the Eisenstein field.  We show that the family of $L$-functions associated with the cubic residue symbols $\chi_n$ with $n$ square-free and congruent to 1 modulo 9 satisfies the Katz-Sarnak conjecture for all test functions whose Fourier transforms are supported in $(-13/11, 13/11)$, under GRH. This is the first result extending the support outside the {\it trivial range} $(-1, 1)$ for a family of cubic L-functions.
This implies that a positive density of the L-functions associated with these characters do 
 not vanish at the central point $s=1/2$. 
 A key ingredient in our proof is a bound on an average of generalized cubic Gauss sums at prime arguments, whose proof is based on the work of Heath-Brown and Patterson \cite{HBP1979, HB2000}.
\end{abstract}

\section{Introduction}
Let $\sF$ be a family of primitive Dirichlet characters $\chi$ defined over $\Q$, or more generally over a number field $K$. From the work of Dirichlet and Hecke, we know that the $L$-functions $L(s,\chi)$ satisfy a functional equation relating the values of  $L(s,\chi)$ to those of $L(1-s, \overline{\chi})$, and the distribution of the non-trivial zeros of $L(s,\chi)$ in the central critical strip is of particular interest.

The one-level density for the family $\sF$ measures the density of the low-lying zeros (i.e. the zeros near $s=1/2$) of the L-functions associated with the characters in $\sF$. Following the work of Montgomery \cite{Mont}, and then Katz and Sarnak \cite{KS-bulletin, KS-book}, we believe that the statistics of the low-lying zeros of these $L$-functions match those of the eigenvalues of random matrices in a certain symmetry group associated with the family $\sF$, usually symplectic, orthogonal, or unitary. 

Let $\phi$ be an even Schwartz test function. For a fixed character in $\sF$, the sum
\eqs{\su{\rho=1/2+i\gamma\\ L(\rho, \chi) = 0} \phi \Bigl( \frac{\gamma \log X}{2 \pi} \Bigr)}
counts, with multiplicity, the zeros of $L(s, \chi)$ that are within $O(1/\log X)$ of the central critical point $s=1/2$. To study the statistics of these zeros, one has to consider the average over the family. In this paper, we consider smoothed averages. Let $w:\R \to (0,\infty)$ be an even Schwartz  function, and 
\begin{align*} \cD(X;\phi, \sF) &= \frac 1 {\sA_\sF (X)} \sum_{\chi \in \sF} w\left( \frac{\N(\cond(\chi))}{X} \right)  \su{\gamma \\ L(1/2+i\gamma, \chi) = 0}\phi \Bigl( \frac{\gamma \log X}{2 \pi} \Bigr)\\
\sA_\sF (X) &= \sum_{\chi \in \sF} w\left( \frac{\N(\cond(\chi))}{X} \right), 
\end{align*}
where $\N(\cond(\chi))$ is the norm of the conductor of the primitive character $\chi$.
The one-level density is then defined as
\eqs{\lim_{X \rightarrow \infty} \cD(X;\phi, \sF).}

\begin{conjecture} [Katz-Sarnak \cite{KS-book,KS-bulletin}] \label{conjecture-KS} With the notation above, we have
\eqs{\lim_{X \rightarrow \infty} \cD(X; \phi, \sF) = \int_{-\infty}^\infty \phi(x) \;W_G(x) \; dx,}
where $W_G(x)$ measures one-level density of eigenvalues near 1 of the classical compact group $G=G({\sF})$ corresponding to the symmetry type of the family
$\sF$. 
\end{conjecture}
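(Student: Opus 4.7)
Since Conjecture~\ref{conjecture-KS} is a general prediction not known in full generality, my plan is to establish it for the specific family announced in the abstract, namely $\sF = \{\chi_n : n \in \ring \text{ squarefree}, n \equiv 1 \pmod 9\}$, for test functions $\phi$ whose Fourier transforms are supported in $(-13/11, 13/11)$. Because the cubic residue symbols are not self-dual (they come in complex-conjugate pairs), the expected symmetry type is unitary, so $W_G(x) \equiv 1$ and the target is simply $\hat\phi(0)$.

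The natural starting point is the weighted explicit formula for each $L(s,\chi)$, converting the sum over zeros into a sum over prime ideals of $\ring$:
$$\sum_\gamma \phi\Bigl(\frac{\gamma \log X}{2\pi}\Bigr) = \hat\phi(0)\,\frac{\log \N(\cond\chi)}{\log X} \,-\, \frac{2}{\log X} \sum_{\mathfrak{p},\,k \ge 1} \frac{\chi(\mathfrak{p})^k \log \N\mathfrak{p}}{\N\mathfrak{p}^{k/2}}\,\hat\phi\Bigl(\frac{k \log \N\mathfrak{p}}{\log X}\Bigr) + O\Bigl(\tfrac{1}{\log X}\Bigr).$$
Weighting by $w(\N(\cond\chi)/X)$ and dividing by $\sA_\sF(X)$, the first term produces $\hat\phi(0)$ up to lower order. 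The $k \ge 3$ tail of the prime sum is negligible by trivial bounds. For $k=2$ one has $\chi(\mathfrak{p})^2 = \overline{\chi}(\mathfrak{p})$ (not the principal character, unlike in symplectic families), so this piece is still an off-diagonal character sum; it turns out to be easier than $k=1$ because the effective range is halved. All the difficulty is in the $k=1$ contribution.

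For $k=1$ one must control
$$\frac{1}{\sA_\sF(X) \log X} \sum_{\mathfrak{p}} \frac{\log \N\mathfrak{p}}{\N\mathfrak{p}^{1/2}}\,\hat\phi\Bigl(\frac{\log \N\mathfrak{p}}{\log X}\Bigr) \sum_{\chi_n \in \sF} w\Bigl(\frac{\N n}{X}\Bigr) \chi_n(\mathfrak{p}).$$
The support condition forces $\N\mathfrak{p} \le X^{13/11}$. For the inner sum I would apply cubic reciprocity to flip $\chi_n(\mathfrak{p})$ into $\chi_\mathfrak{p}(n)$ (up to explicit Eisenstein factors handling the $n \equiv 1 \pmod 9$ condition), remove the squarefree constraint by standard M\"obius inversion, and then Poisson-summate in $n$ using the smooth weight $w$. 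This converts the inner sum into an expression featuring generalized cubic Gauss sums $g(\mathfrak{p}, c)$ evaluated at prime argument $\mathfrak{p}$, weighted by Fourier transforms of $w$.

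The main obstacle is the final step: producing nontrivial cancellation in the resulting average of cubic Gauss sums at prime arguments across the range $\N\mathfrak{p} \le X^{13/11}$. The simplest GRH-based bound on the inner character sum gives cancellation only up to $\N\mathfrak{p} \le X$, which recovers precisely the trivial range $(-1,1)$ and nothing more. To push past this threshold one must invoke the deep analytic properties of the Kubota--Patterson cubic theta function, as developed by Heath-Brown and Patterson \cite{HBP1979, HB2000}: the meromorphic continuation and pole structure of the Dirichlet series $\sum_c g(\mathfrak{p}, c) \N c^{-s}$ furnishes the required saving. The numerology $13/11$ emerges as the balance between the Poisson-dual range of $c$ and the analytic exponent coming from Heath-Brown--Patterson; going further would require progress toward the Patterson conjecture on the first Fourier coefficient of the cubic theta function. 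Once this bound is in place, the $k=1$ contribution is $o(1)$ and the unitary prediction $\hat\phi(0)$ is matched.
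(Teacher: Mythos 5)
Your outline matches the paper's approach to Theorem~\ref{thm-main}: explicit formula, cubic reciprocity together with ray-class characters mod $9$, M\"obius inversion to handle square-freeness, Poisson summation, and the Heath-Brown--Patterson analysis of cubic Gauss sums at prime arguments (Theorem~\ref{average-GS}, proved via Vaughan's identity and the meromorphic continuation of Patterson's $\psi(r,\lambda,s)$), with the unitary density $W_U\equiv 1$ giving $\widehat\phi(0)$. One small notational slip: after Poisson summation the prime appears as the \emph{modulus} of the Gauss sum, $g(nd,\pi)$, so the key Dirichlet series is $\sum_c g(r,c)\,\N(c)^{-s}$ with the shift $r$ fixed and the sum running over moduli $c$, which Vaughan's identity then sieves down to prime $c$; this does not affect the correctness of the strategy.
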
 

We refer the reader to \cite[page 409]{KS-book} for the precise formulae of the densities $W_G(x)$ for the different symmetry groups $G$.

The conjecture of Katz and Sarnak is still open, but evidence for the conjecture can be obtained by proving that the conjecture holds for test functions $\phi$ whose Fourier transforms $\phat$ 
%$$\phat(y) = \int_{-\infty}^\infty \phi(x) e(-xy) \, dx,  \;\;\;\;\; e(x) = e^{2 \pi i x},$$ 
have compact support. 

Assuming GRH, \"Ozl\"uk and Snyder \cite{OS-1999} showed that the one-level density for the family of quadratic characters satisfies the Katz-Sarnak conjecture with symplectic symmetry for test functions $\phi$ with $\phat$ supported in $(-2, 2)$. The same result was obtained over functions fields by Rudnick \cite{Rudnick} and Bui and Florea \cite{BuiFlo2018} (who also identify some lower order terms).

Besides the family of quadratic Dirichlet L-functions, Conjecture \ref{conjecture-KS} has been confirmed (with limited support) for many other families of L-functions, such as different types of Dirichlet L-functions \cite{DPR2020, Gao,HR2003,M2008,OS-1999, R2001}, L-functions with characters of the ideal class group of the imaginary quadratic field $\Q(\sqrt{-D})$ \cite{FI2003}, automorphic L-functions \cite{HM2007,HM2009,ILS,RR2011,Royer}, elliptic curve L-functions \cite{BZ2008,B1992, DHP2015, HB2004,M2004,Y2005}, Hecke L-functions for characters of infinite order \cite{Waxman2021}, symmetric powers of GL(2)
L-functions \cite{DM2006,G2005}, and a family of GL(4) and GL(6) L-functions \cite{DM2006}.

We study in this paper the one-level density for families of primitive cubic Dirichlet characters defined over the Eisenstein field $\Q(\omega)$, where $\omega=e^{2\pi i/3}$. Many new conceptual and technical difficulties appear when considering cubic (and not quadratic) characters, and the results in the literature are fewer and weaker. Meisner \cite{Meisner}  and Cho and Park \cite{ChoPark} showed that, under GRH, the one-level density for families of cubic characters over $\Q$ satisfies the Katz-Sarnak conjecture with unitary symmetry for test functions $\phi$ with $\phat$ supported in $(-1, 1)$. Unconditional (and weaker) results were obtained by Gao and Zhao \cite{GZ2011} and Meisner \cite{Meisner} .

The $(-1,1)$-range for the support of $\phat$ is a natural boundary for families attached to Dirichlet characters, and our work provides the first example of a family of cubic L-functions in which the support is  extended past this trivial range, assuming GRH. In the recent work \cite{DPR2020}, Drappeau, Pratt, and Radziwill computed the one-level density over the family of all primitive Dirichlet characters, and proved the first unconditional result extending the trivial range for this family.

An important tool to extend the support past the trivial range for quadratic characters is the use of Poisson summation.  This is more intricate for cubic characters, as Poisson summation leads to averages of Gauss sums, and while quadratic Gauss sums are given by a simple formula, their cubic analogues exhibit chaotic behaviour; in order to understand them, we must invoke the deep work of Kubota and Patterson. 
Furthermore, other features of the cubic families seem to conspire to make this strategy fail: for cubic characters over $\Q$, the Gauss sums are not defined over the ground field, and for cubic characters over $\Q(\omega)$, there are ``too many characters". For this reason, it had then become customary in the literature to consider a thin subfamily of the cubic characters 
over $\Q(\omega)$,  as in \cite{GZ2011, Luo, FHL, Diaconu}. 
Only recently, moments for the whole family %$\sF_3$ 
of cubic characters when the base field contains the cubic root of unity 
 were considered in the work of David, Florea and Lalin, who computed the first moment for the whole family over function fields \cite{DFL-1}.

\subsection{Statement of the results}
The main result of this paper is the following theorem, where we extend the support of the Fourier transform of the test function for the thin family $\sF_3'$.

\begin{thm} \label{thm-main} Let $\sF_3'$ be the family of primitive cubic Dirichlet characters defined by \eqref{thinfamily}. Let $\phi$ be an even Schwartz function with $\phat$ supported in $(-\frac{13}{11}, \frac{13}{11})$. If GRH holds for $L(s,\chi)$ for each $\chi \in \sF_3'$, then  
\eqs{\lim_{X \rightarrow \infty} \cD(X;\phi, \sF_3') = \int_{-\infty}^\infty \phi(x) W_{U}(x) dx = \phat(0),}
where $W_{U}(x)$ is the kernel measuring the one-level density for the eigenvalues of unitary matrices. 
\end{thm}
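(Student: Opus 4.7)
The natural approach is to apply the explicit formula to $L(s,\chi)$ for each $\chi \in \sF_3'$, turning the sum over low-lying zeros into the main term $\phat(0)\,\log \N(\cond(\chi))/\log X$ coming from the gamma factors, minus the prime sum
\begin{equation*}
\frac{2}{\log X}\sum_{\fp,\,k\ge 1}\frac{\log \N(\fp)}{\N(\fp)^{k/2}}\;\re\bigl(\chi(\fp)^k\bigr)\;\phat\!\left(\frac{k\log\N(\fp)}{\log X}\right).
\end{equation*}
Averaging against $w(\N(\cond(\chi))/X)$ and normalizing by $\sA_{\sF_3'}(X)$, the first piece tends to $\phat(0)$ since the conductors in $\sF_3'$ are essentially of size $X$, while the $k\ge 2$ contribution is controlled by the prime ideal theorem for $\ring$ and contributes $o(1)$ for any fixed compactly supported $\phat$. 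Everything then reduces to showing that the $k=1$ average
\begin{equation*}
\cS(X)\;:=\;\frac{1}{\sA_{\sF_3'}(X)\log X}\sum_{\chi\in\sF_3'}\!w\!\left(\tfrac{\N(\cond(\chi))}{X}\right)\!\sum_{\fp}\frac{\log\N(\fp)}{\sqrt{\N(\fp)}}\;\chi(\fp)\;\phat\!\left(\tfrac{\log\N(\fp)}{\log X}\right)
\end{equation*}
and its complex conjugate are $o(1)$ whenever $\mathrm{supp}\,\phat\subset(-13/11,\,13/11)$.

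Writing $\chi=\chi_n$ for $n\in\ring$ squarefree with $n\equiv 1\pmod 9$, and swapping the two sums, the inner object becomes $\sum_n \chi_n(\fp)\, w(\N(n)/X)$ restricted to the progression and the squarefree locus. Cubic reciprocity exchanges the arguments, replacing $\chi_n(\fp)$ by $\cleg{n}{\fp}_3$ up to explicit local factors from the condition modulo $9$. For $\N(\fp)<X$, a direct P\'olya--Vinogradov-type estimate for the character sum modulo $\fp$ recovers the Meisner and Cho--Park range $(-1,1)$. To extend past the trivial range, the plan is to detect the squarefree condition via a M\"obius convolution and then apply Poisson summation on $\ring$ modulo $9\fp$. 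This produces a dual sum whose Fourier coefficients are the generalized cubic Gauss sums
\begin{equation*}
g_3(m,\fp)\;=\;\sum_{n\bmod\fp}\cleg{n}{\fp}_3\; e_\fp(mn),\qquad m\in\ring.
\end{equation*}

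After the Poisson step, the outer prime sum over $\fp$ combines with the dual sum over $m$ to produce averages of $g_3(m,\fp)$ over primes $\fp$, weighted smoothly in $\N(\fp)$ and $\N(m)$. The crucial input, announced in the abstract, is cancellation in $\sum_{\fp}g_3(m,\fp)/\sqrt{\N(\fp)}$ drawn from the Kubota Dirichlet series and the Fourier--Whittaker coefficients of the cubic metaplectic theta function, following the strategy of Heath-Brown and Patterson \cite{HBP1979, HB2000}. This is the heart of the argument and the main technical obstacle: individual cubic Gauss sums have no closed form and display the chaotic Patterson-style fluctuation, so that pointwise bounds are insufficient and genuine oscillation over primes must be harvested.

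The precise threshold $13/11$ emerges from balancing three sources of loss: the $m=0$ ``diagonal'' term after Poisson, whose size is governed by the ratio between the length $X$ of the $n$-sum and the modulus $\N(\fp)$; the off-diagonal $m\ne 0$ contribution controlled by the Gauss-sum-over-primes estimate; and the error introduced when expanding the M\"obius convolution that encodes the squarefree condition (which effectively shortens the usable range of $\fp$). Optimizing these three simultaneously fixes the admissible support, and improving any one of them beyond what is currently known for cubic Gauss sum averages is what would be required to push the result further.
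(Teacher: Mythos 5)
Your strategy matches the paper's at every structural step: explicit formula, M\"obius detection of the squarefree condition with a cutoff parameter $D$, cubic reciprocity to transfer the character to the prime modulus, Poisson summation in $n$, and then the key Heath-Brown--Patterson input bounding $\sum_{\pi} g(r,\pi)\lambda(\pi)\Lambda(\pi)/\N(\pi)^{1/2}$, generalizing \cite{HB2000,HBP1979} to track the shift $r$ (Theorem~\ref{average-GS}). One conceptual slip in your accounting of where $13/11$ comes from: there is no ``$m=0$ diagonal'' after Poisson, because the cubic character annihilates it ($\overline{\chi}_\pi(0)=0$ in Lemma~\ref{lem:Poisson}). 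What plays the role of the main term is instead the simple pole at $s=4/3$ of Patterson's metaplectic Dirichlet series $\psi(r,\lambda,s)$ (Lemma~\ref{lem:Patterson}), i.e.\ the residue tied to Kubota's cubic theta function; after Vaughan's identity and Lemma~\ref{lemma-kolesnik} this is what produces the $Z^{5/6}\N(r)^{1/12+\eps}$ term in Theorem~\ref{average-GS}, whose interaction with the M\"obius range $D$ and the GRH bound for $S_2(y)$ is precisely what pins down the threshold $v<13/11$.
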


A folklore conjecture of Chowla predicts that
$L(\frac12, \chi) \neq 0$ for {\it all} L-functions $L(s, \chi)$ attached to Dirichlet characters. Over function fields, this is false, and in a recent paper, Li \cite{Li} showed that there are infinitively many quadratic Dirichlet L-functions such that $L(\frac12, \chi) = 0$ in this case. It is believed that the number of such characters should be of density zero among all quadratic characters, which is implied by  (the function field version of) the conjecture of Katz and Sarnak.
 It is well-known that proving Conjecture \ref{conjecture-KS}  for test functions where the support of $\phat$ is large enough, yields a \emph{positive proportion} of non-vanishing for the corresponding set of L-functions, bringing evidence to Chowla's conjecture. For the family of cubic characters, one needs to extend the support beyond $(-1,1)$ to get a positive proportion. Hence, by Theorem \ref{thm-main},  we can prove the following result for the thin subfamily of cubic characters.

\begin{cor} \label{non-vanishing} \label{coro-non-vanishing} 
Let $\sF_3'$ be the family of primitive cubic Dirichlet characters defined in \eqref{thinfamily}. 
If GRH holds for the corresponding L-functions, then $L(\tfrac12, \chi) \neq 0$ for at least $2/13$ of the characters in $\sF_3'$.
\end{cor}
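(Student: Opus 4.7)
The plan is to deduce the non-vanishing bound from Theorem \ref{thm-main} by a standard positivity/test function optimization argument that is well adapted to unitary symmetry. Since $W_U(x)\equiv 1$, Theorem \ref{thm-main} simplifies to
\eqs{\lim_{X\to\infty} \cD(X;\phi,\sF_3') = \phat(0)}
for any even Schwartz $\phi$ with $\phat$ supported in $(-\tfrac{13}{11},\tfrac{13}{11})$. The idea is to choose $\phi\ge 0$ so that the existence of a zero at $s=1/2$ forces a definite contribution to $\cD$.

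First I would record the trivial lower bound: if $\phi\ge 0$ on $\R$, then every term in the sum $\sum_\gamma \phi(\gamma\log X/2\pi)$ is nonnegative, and a character $\chi\in\sF_3'$ with $L(1/2,\chi)=0$ contributes at least $\phi(0)$ (with multiplicity). Writing
\eqs{p(X) = \frac{1}{\sA_{\sF_3'}(X)}\su{\chi\in\sF_3'\\ L(1/2,\chi)=0} w\!\left(\frac{\N(\cond(\chi))}{X}\right),}
this gives $\cD(X;\phi,\sF_3') \ge \phi(0)\,p(X)$, so by Theorem \ref{thm-main},
\eqs{\limsup_{X\to\infty} p(X) \le \frac{\phat(0)}{\phi(0)}.}
Consequently the proportion of $\chi\in\sF_3'$ with $L(1/2,\chi)\neq 0$ is at least $1 - \phat(0)/\phi(0)$, provided we choose $\phi\ge 0$ to minimize this ratio.

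Next I would construct the optimal admissible test function. Set $\eta=13/11$, let $g$ be a smooth even nonnegative bump approximating the indicator $\mathbf{1}_{(-\eta/2,\eta/2)}$, and define $\psi$ by $\widehat{\psi}=g$; then take
\eqs{\phi(x) = |\psi(x)|^2, \qquad \widehat{\phi} = g\ast g,}
so $\widehat{\phi}$ is supported in $(-\eta,\eta)$ and $\phi\ge 0$ is even and Schwartz, hence admissible in Theorem \ref{thm-main}. By Plancherel,
\eqs{\phi(0) = \left(\int_{-\eta/2}^{\eta/2} g(y)\,dy\right)^{\!2}, \qquad \phat(0) = \int |g(y)|^2\,dy,}
and the Cauchy--Schwarz inequality
\eqs{\left(\int_{-\eta/2}^{\eta/2} g(y)\,dy\right)^{\!2} \le \eta \int |g(y)|^2\,dy}
becomes an equality in the limit as $g\to\mathbf{1}_{(-\eta/2,\eta/2)}$. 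Hence by a limiting argument one can make $\phi(0)/\phat(0)$ arbitrarily close to $\eta=13/11$, so the non-vanishing proportion is at least $1-1/\eta = 2/13$.

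There is no real obstacle here; the only mild point is ensuring that the optimizing test function, which is essentially the Fej\'er kernel with $\widehat{\phi}(y)=\max(0,1-|y|/\eta)$ normalized appropriately, is itself even, Schwartz, and nonnegative with $\widehat{\phi}$ supported in the required open interval. This is handled by the standard approximation argument above, first working with an admissible smoothing and then taking the limit; the positivity $\phi\ge 0$ is automatic because $\widehat{\phi}$ is a self-convolution.
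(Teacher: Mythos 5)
Your proposal is correct and takes essentially the same approach as the paper: both convert the positivity $\phi\ge 0$ and $\phi(0)>0$ into a lower bound on $\cD(X;\phi,\sF_3')$ proportional to the vanishing density, apply Theorem \ref{thm-main}, and optimize over $\phi$, and both arrive at the Fej\'er shape $\widehat\phi(t)\propto\max(0,1-|t|/\eta)$ with $\eta=13/11$. The paper uses $\phi_v(x)=(\sin(\pi v x)/\pi v x)^2$ directly and lets $v\to 13/11$, while you approximate by a smooth self-convolution and invoke Cauchy--Schwarz to identify the extremal ratio; your smoothing step is, if anything, slightly more careful since the Fej\'er kernel decays only like $x^{-2}$ and is not literally Schwartz. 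The one point you elide is the passage from the $w$-weighted proportion (your $p(X)$ involves $w(\N(\cond\chi)/X)$) to the unweighted counting proportion that the corollary asserts; the paper handles this with a specific choice of $w$ approximating an indicator at the end of Section \ref{section-non-vanishing}, and you would need the same final step.
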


Our result is the first result showing a positive proportion of non-vanishing for any cubic family over number fields. Unconditionally, it is known that there are infinitely many cubic Dirichlet characters $\chi$ such that $L(\frac12, \chi) \neq 0$, over $\Q$ \cite{BaYo} and over $\Q(\omega)$ \cite{Luo}.
Over function fields, 
a positive proportion of non-vanishing was obtained by David, Florea and Lalin \cite{DFL-2}
 for the family of cubic characters when $\F_q$ does not contain a third root of unity (which is the equivalent of cubic characters over $\Q$), improving previous work \cite{DFL-1, ELS2020} exhibiting infinitively many cubic Dirichlet characters $\chi$ such that $L(\frac12, \chi) \neq 0$ over function fields.
 The proof of \cite{DFL-2} uses a completely different technique from this paper, based on the mollified moments. It is interesting to compare the techniques and results, and  we believe that the results of \cite{DFL-2} could be obtained for cubic characters over $\Q$, where the one-level density approach seems to fail. 
 Of course, we would then need to assume GRH (which is proven over function fields). We also speculate that the mollified moments approach is more likely to succeed in obtaining a positive proportion of non-vanishing for the full family of primitive cubic characters over $\Q(\omega)$ than the one-level density approach, as breaking the $(-1,1)$-barrier using the one-level density is harder due to the size of the family. As there is no result in the literature for the one-level density for the full family over $\Q(\omega)$, we include the following result, which supports the Katz-Sarnak conjecture for test functions whose Fourier transform has support in the trivial range $(-1,1)$.

\begin{thm}\label{thm-trivial} Let $\sF_3$ be the family of primitive cubic Dirichlet characters defined in Section \ref{full-family}. Let $\phi$ be an even Schwartz function with $\phat$ supported in $(-1, 1)$. Assume GRH for $L(s,\chi)$ for each $\chi \in \sF_3$. Then,
\eqs{\lim_{X \to \infty} \cD(X;\phi, \sF_3) = \int_{-\infty}^\infty \phi(x) W_{U}(x) dx,}
where $W_{U}(x)$ is the kernel measuring the one-level density for eigenvalues of unitary matrices. 
\end{thm}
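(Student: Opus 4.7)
The plan is to apply the explicit formula to convert the zero sum in $\cD(X;\phi,\sF_3)$ into an average over prime ideals twisted by the characters $\chi \in \sF_3$, and then to extract cancellation in the character sum via cubic reciprocity and a Polya--Vinogradov-type bound. First, I would invoke Weil's explicit formula for the Hecke $L$-function $L(s,\chi)$ of a primitive cubic character $\chi$ of conductor $\cond(\chi)$, which produces a main term of the form $\phat(0) \log \N(\cond(\chi))/\log X$, archimedean contributions of size $O(1/\log X)$, and a prime sum
\eqs{-\frac{2}{\log X} \sum_{\fp} \sum_{k \ge 1} \frac{\chi(\fp)^k \log \N\fp}{\N\fp^{k/2}} \phat\Bigl(\frac{k \log \N\fp}{\log X}\Bigr).}
After averaging over $\chi \in \sF_3$ with the smooth weight $w(\N(\cond(\chi))/X)$, the first term yields the Katz--Sarnak main term $\phat(0) = \int \phi(x) W_U(x)\, dx$, since $\N(\cond(\chi)) \asymp X$ on the support of $w$.

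Next I would attack the prime sum. For $k \ge 3$ the trivial bound together with the convergence of $\sum_\fp \log \N\fp / \N\fp^{3/2}$ gives an $O(1/\log X) = o(1)$ contribution. For $k = 1, 2$ I would interchange summation and focus on the inner character sum
\eqs{S(\fp^k;X) := \sum_{c} w\Bigl(\frac{\N(c)}{X}\Bigr) \chi_c(\fp^k),}
where $c$ ranges over square-free elements of $\ring$ satisfying the congruence conditions defining $\sF_3$. Cubic reciprocity in $\ring$ identifies $\chi_c(\fp^k)$ with $\chi_{\fp^k}(c)$ up to fixed factors depending only on residues of $c$ modulo primes above $3$, converting $S(\fp^k;X)$ into a character sum against the fixed nontrivial cubic character $\chi_{\fp^k}$. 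After removing the square-free restriction by M\"obius inversion and the smooth weight by partial summation, Polya--Vinogradov over $\ring$ yields $S(\fp^k;X) = O(\N(\fp)^{k/2} X^{\eps})$ provided $\fp^k$ is not a cube.

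Plugging this back in, the $k = 1$ contribution to $\cD(X;\phi,\sF_3)$ is bounded by
\eqs{\ll \frac{1}{X \log X} \sum_{\N\fp \le X^{\sigma}} \frac{\log \N\fp}{\N\fp^{1/2}} \cdot \N\fp^{1/2} X^{\eps} \ll X^{\sigma-1+\eps},}
where $\sigma := \sup\bigl(\mathrm{supp}\, \phat\bigr)$; this is $o(1)$ precisely under the hypothesis $\sigma < 1$, and the $k = 2$ contribution gains an extra factor of $\N\fp^{-1/2}$ and is smaller. The hard part will be the bookkeeping needed to pass cleanly through cubic reciprocity: tracking the square-free restriction, the ramification at primes above $3$, and the smooth weight $w$ uniformly in $\fp$, so that the $X^\eps$ loss really is a negligible power. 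Crucially, this Polya--Vinogradov-based approach cannot push $\sigma$ past $1$, because for $\sigma > 1$ the range of $\fp$ exceeds $X$ and the square-root bound on $S(\fp^k;X)$ no longer beats the trivial one; breaking that barrier is precisely what requires the Heath-Brown--Patterson input on cubic Gauss sums at prime arguments used in Theorem \ref{thm-main}.
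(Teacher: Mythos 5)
Your overall explicit-formula framework and the reduction to bounding
\[
S(\fp^k;X)=\sum_{\chi\in\sF_3} w\Bigl(\frac{\N(\cond\chi)}{X}\Bigr)\chi(\fp^k)
\]
via cubic reciprocity is the same as in the paper. But the heart of your argument --- the claim that P\'olya--Vinogradov after M\"obius inversion gives $S(\fp^k;X)=O(\N\fp^{k/2}X^\eps)$ --- is not correct, and the shape of the error betrays this: your proof does not invoke GRH anywhere, yet the theorem assumes it, and the paper's unconditional result for $\sF_3$ (Theorem \ref{thm-non-GRH}) only reaches support $(-\tfrac12,\tfrac12)$. If your bound held you would have beaten the paper's unconditional theorem by a full factor of two, which should raise a flag.

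Concretely, after cubic reciprocity the family $\sF_3$ produces not a one-variable character sum of the form $\sum_{\N c\le X}\chi_{\fp^k}(c)$ but a sum of a divisor-type coefficient against $\chi_\pi$: parametrizing the conductor as $m=ab$ (with $a,b$ square-free and coprime) and using $\chi_\pi(ab^2)=\chi_\pi^2(m)\chi_\pi^2(a)$, one gets $\sum_m w(\N m/X)\chi_\pi^2(m)\sum_{a\mid m}\chi_\pi^2(a)$ (up to the congruence detector mod $9$), whose Dirichlet series is $L(s,\chi_\pi\psi)L(s,\chi_\pi^2\psi^2)$ times a tame Euler product. P\'olya--Vinogradov controls each inner block of fixed modulus but the M\"obius/divisor tail costs a sum over $\asymp\sqrt X$ values of the auxiliary variable, so the best one extracts this way is $S(\fp;X)\ll X^{1/2}\N\fp^{1/2}\log\N\fp$ unconditionally, not $\N\fp^{1/2}X^\eps$. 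Plugging this back in only gives $o(1)$ for $\sigma<\tfrac12$, which is exactly the unconditional range. The paper instead writes $S_{\sF_3}(y)$ as a contour integral of $X^s\fw(s)G_{\chi_\pi\psi}(s)$ via Mellin inversion, shifts to $\re s=\tfrac12+\eps$, and applies the GRH Lindel\"of bound $L(\tfrac12+\eps+it,\chi)\ll (t\,\N\cond\chi)^\eps$ from \eqref{bound-L-at-onehalf}; this replaces the convexity factor $\N\fp^{1/2}$ by $\N\fp^\eps$, giving $S_{\sF_3}(y)\ll X^{1/2+\eps}y^{1+2\eps}$ and hence $\ef(X)\ll X^{1/2+v/2+\eps}$, which is $o(X\log X)$ precisely for $v<1$. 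So the correct mechanism is the GRH subconvexity of the relevant Hecke $L$-functions, not P\'olya--Vinogradov, and the gap in your proposal is both the missing factor of $X^{1/2}$ in the claimed bound and the unused hypothesis.
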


Finally, we state some unconditional results.
\begin{thm}\label{thm-non-GRH} 
Let $\sF_3$ be the family of primitive cubic Dirichlet characters over $\Q(\omega)$ defined in Section \ref{full-family}. Let $\phi$ be an even Schwartz function with $\phat$ supported in $(-\frac12, \frac12)$. Then,
\eqs{\lim_{X \to \infty} \cD(X;\phi, \sF) = \int_{-\infty}^\infty \phi(x) W_{U}(x) dx = \phat(0),}
where $W_{U}(x)$ is the kernel measuring the one-level density for eigenvalues of unitary matrices. The same result holds for the subfamily $\sF_3'$ defined in Section \ref{section-thm-main} with $\phat$ supported in $(-\frac23, \frac23)$.
\end{thm}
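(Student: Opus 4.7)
The plan is to prove Theorem~\ref{thm-non-GRH} by running the same explicit-formula machinery that underlies Theorems~\ref{thm-main} and~\ref{thm-trivial}, but replacing all GRH-dependent bounds on prime sums by unconditional convexity estimates for Hecke $L$-functions over $\Q(\omega)$. The cost of this weaker input is a narrower admissible support for $\phat$, which is exactly what produces the ranges $(-1/2,1/2)$ and $(-2/3,2/3)$ claimed for $\sF_3$ and $\sF_3'$ respectively.

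After applying the explicit formula to each $L(s,\chi)$ and interchanging summations, one finds
\eqs{\cD(X;\phi,\sF)=\phat(0)-\frac{2}{\sA_\sF(X)\log X}\sum_{k\ge 1}\sum_{\fp}\frac{\log\N(\fp)}{\N(\fp)^{k/2}}\phat\!\left(\frac{k\log\N(\fp)}{\log X}\right)T_k(\fp)+O(1/\log X),}
where $T_k(\fp)=\sum_\chi w(\N(\cond(\chi))/X)\chi(\fp)^k$, the leading $\phat(0)$ arises from the analytic-conductor contribution, and the terms with $k\ge 3$ are absolutely convergent and absorbed in the error. It therefore suffices to show that the $k=1,2$ pieces are $o(1)$ on the truncated ranges $\N(\fp)\le X^{\sigma/k}$, with $\sigma=1/2$ for $\sF_3$ and $\sigma=2/3$ for $\sF_3'$.

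For the thin family $\sF_3'$, characters are $\chi_n$ with $n\in\Z[\omega]$ primary, square-free, and $n\equiv 1\pmod 9$; cubic reciprocity gives $\chi_n(\fp)=\cleg{\fp}{n}$, so $T_1(\fp)$ becomes a smoothed sum of a fixed primitive cubic character modulo $\fp$ over a congruence class of square-free Eisenstein integers. Removing the square-free and congruence conditions by standard M\"obius identities, representing each resulting sum as a contour integral $\tfrac{1}{2\pi i}\int_{(c)}X^s\tilde w(s) L(s,\cleg{\fp}{\cdot}\psi)\,ds$ against auxiliary ray-class characters $\psi$ of bounded conductor, and shifting to $\re(s)=1/2$, the Phragm\'en--Lindel\"of convexity bound yields $T_1(\fp)\ll X^{1/2}\N(\fp)^{1/4+\eps}$ uniformly in $\fp$. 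The case $T_2$ is identical, since $\chi_n(\fp)^2=\overline{\chi_n(\fp)}$ is itself a primitive cubic residue symbol modulo $\fp$.

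Inserting this bound into the explicit formula and carrying out the remaining $\fp$-sum via Chebyshev's estimate $\sum_{\N(\fp)\le Y}\log\N(\fp)/\N(\fp)^{1/4}\ll Y^{3/4}$, the $k=1$ contribution becomes $\ll X^{3\sigma/4-1/2+\eps}/\log X$, which is $o(1)$ for any $\sigma<2/3$, with the endpoint $\sigma=2/3$ of $\sF_3'$ recovered by exploiting the Schwartz decay of $\phat$ at the edge of its support; the $k=2$ contribution is strictly smaller because of the tighter cutoff $\N(\fp)\le X^{\sigma/2}$. The full family $\sF_3$ requires an additional M\"obius sieve to extract primitive characters from the ambient family of all cubic characters, and this extra sieve costs a factor of roughly $X^{1/4}$ in the character-sum estimate, forcing the smaller support $\sigma\le 1/2$. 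The main obstacle is precisely preserving uniformity in $\fp$ through these nested M\"obius sieves and the contour shift; pushing beyond the stated ranges would require genuinely subconvex bounds for $L(s,\cleg{\fp}{\cdot})$ or bilinear cancellation over pairs $(\fp,n)$, i.e.\ exactly the Heath-Brown--Patterson input that drives the conditional Theorem~\ref{thm-main}, so this unconditional result is in this sense the natural ``convexity companion'' to the main theorem.
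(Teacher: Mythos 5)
Your treatment of the thin family $\sF_3'$ is correct and follows the same approach as the paper: bound the smoothed character sum $T_1(\fp)$ for fixed $\fp$ by Mellin inversion, ray-class character expansion, and the unconditional convexity bound $L(\tfrac12+\eps+it,\chi_\pi\psi)\ll |t|^{1/2+\eps}\N(\fp)^{1/4}$, giving $T_1(\fp)\ll X^{1/2+\eps}\N(\fp)^{1/4+\eps}$, then sum over primes to obtain $\cE_{\sF_3'}(X)\ll X^{1/2+3v/4+\eps}$, which is $o(X\log X)$ precisely when $v<2/3$. (The paper organizes this as a bound on $S_{\sF_3'}(y)$ followed by partial integration, but the computation is the same; there is no need to ``exploit Schwartz decay at the endpoint'' since the support being the open interval $(-2/3,2/3)$ already guarantees a strict inequality $v<2/3$.)

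Your handling of the full family $\sF_3$, however, is not correct. You attribute the smaller admissible support to ``an additional M\"obius sieve to extract primitive characters from the ambient family,'' costing ``a factor of roughly $X^{1/4}$ in the character-sum estimate.'' Neither claim is right. Both families are already defined to consist of primitive characters, and both involve a M\"obius sieve for square-freeness; no extra ``primitivity sieve'' distinguishes them. The actual source of the loss is structural: for $\sF_3$ the characters are $\chi_{ab^2}$, so after cubic reciprocity the generating series factors as $L(s,\chi_\pi\psi)\,L(s,\chi_\pi^2\psi^2)\,F(s,\psi)$ with a product of \emph{two} Hecke $L$-functions of conductor $\asymp\N(\fp)$ (cf.\ \eqref{G_psi}), each contributing $\N(\fp)^{1/4+\eps}$ under convexity. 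Thus $T_1(\fp)\ll X^{1/2+\eps}\N(\fp)^{1/2+\eps}$, i.e.\ the extra factor is $\N(\fp)^{1/4}$, not $X^{1/4}$. Summing over $\N(\fp)\le X^v$ then gives $\cE_{\sF_3}(X)\ll X^{1/2+v+\eps}$, which is $o(X(\log X)^2)=o(\sA_{\sF_3}\log X)$ exactly when $v<1/2$. If one instead inserted a constant factor of $X^{1/4}$ as you propose, the resulting bound would be $X^{3/4+3v/4+\eps}$, giving only $v<1/3$, which contradicts the statement being proved. So while your final answer for $\sF_3$ matches the theorem, the mechanism you describe does not produce it; the argument for the full family needs to be rewritten with the two-$L$-function structure made explicit.
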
 

We remark that since we consider smooth sums, the support of $\phat$ for the family $\sF_3'$ in Theorem \ref{thm-non-GRH}
is slightly better than the one obtained by Gao and Zhao in \cite{GZ2011}, which requires supp$(\phat) \subseteq (-\frac35, \frac35)$ for the same family.

\subsection{Structure of the paper}
 In Section \ref{section-cubic}, we collect the relevant facts about cubic characters, cubic families and cubic Gauss sums. In Section \ref{full-family}, we prove Theorem \ref{thm-trivial}.
In Section \ref{section-thm-main}, we use Poisson summation to reduce the computation of the one-level density to averages of generalized cubic Gauss sums at prime arguments, and we prove
Theorem \ref{thm-main}, assuming the bounds for those averages given by Theorem \ref{average-GS}. We prove Theorem \ref{average-GS} in Section \ref{section-generalized-HB}
and Section \ref{section6} by generalising the work of Heath-Brown \cite{HB2000} and Heath-Brown and Patterson \cite{HBP1979} on the distribution of cubic Gauss sums at prime arguments. The proof of Corollary \ref{non-vanishing} is given in Section \ref{section-non-vanishing}.

\section{$L$-functions of cubic characters and cubic Gauss sums}
\label{section-cubic}

\subsection{Cubic Dirichlet $L$-functions}
Let $K= \Q(\omega)$, $\omega=e^{2 \pi i/3}$. The ring of integers $\ring$ of $K$ has class number one and six units $\left\{ \pm 1, \pm \omega, \pm \omega^2 \right\}$. Each non-trivial principal ideal $\fn$ co-prime to $3$ has a unique generator $n \equiv 1 \mod 3$.

The cubic Dirichlet characters on $\Z[\omega]$ are given by the cubic residue symbols. For each prime $\pi \in \ring$ with $\pi \equiv 1 \mod 3$, there are two primitive characters of conductor $\pi$; the cubic residue symbol $\chi_\pi(a)$ satisfying
$$\chi_\pi(a) =  \( \frac{a}{\pi}\)_3 \equiv a^{{(\N(\pi)-1)/3}} \mod \pi,$$
and its conjugate $\overline{\chi}_\pi= \chi_\pi^2$.
In general, for  $n\in\ring$ with $n \equiv 1 \mod 3$, the cubic residue symbol $\chi_n$ is defined multiplicatively using the characters of prime conductor by
\eqs{\chi_n (a) = \left( \frac{a}{n} \right)_3 = \prod_{\pi \mid n} \chi_\pi (a)^{v_\pi(n)} .}

Such a character $\chi_n$ is primitive when it is a product of characters of distinct prime conductors, i.e. either $\chi_\pi$ or $\chi_{\pi}^2 = \chi_{\pi^2}$.
Moreover, $\chi_n$ is a cubic Hecke character of conductor $n \ring$ if $\chi_n(\omega) = 1$. 
Since
\eqs{\( \frac{\omega}{n}\)_3 = \prod_{\pi \mid n} \omega^{v_\pi(n) (N(\pi)-1)/3} = \omega^{\sum_{\pi \mid n} v_\pi(n) (N(\pi)-1)/3} = \omega^{(N(n) - 1)/3},}
we conclude that a given Dirichlet character $\chi$ is a primitive cubic Hecke character of conductor $n_1 n_2\ring$, co-prime to $3$, provided that $\chi = \chi_n$, where 
\begin{enumerate}
\item $n=n_1 n_2^2$, where $n_1, n_2$ are square-free and co-prime, and
\item  $\N(n) \equiv 1 \mod 9$, or equivalently, $\N(n_1) \equiv \N(n_2) \mod 9$.
\end{enumerate}
\begin{rem}
Given $n\in \ring$, co-prime to 3, write $n=n_1n_2^2 n_3^3$, where $n_1n_2$ is square-free, and $n_1, n_2$ are co-prime. Then, the character $\chi_n$ modulo $n\ring$ is induced by the primitive character $\chi_{n_1} \chi_{n_2^2}$ of conductor $n_1n_2\ring$ unless $n$ is a cube; that is, $n_1, n_2$ are units. 
\end{rem}

We recall the cubic reciprocity for cubic characters.
\begin{lem} Let $m,n \in \ring, m,n \equiv \pm 1 \mod 3.$ Then,
$$
\( \frac{m}{n}\)_3 = \( \frac{n}{m}\)_3.
$$
\end{lem}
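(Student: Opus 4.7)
The plan is to prove this classical cubic reciprocity law following Eisenstein's approach via cubic Gauss sums (see, e.g., Chapter~9 of Ireland--Rosen).

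First I will reduce to the case of two distinct primary primes. The cubic residue symbol $\chi_n(a)=\bigl(\tfrac{a}{n}\bigr)_3$ is completely multiplicative in $a$, and, since its definition $a^{(\N(n)-1)/3}\bmod n$ depends on $n$ only through the ideal $(n)$, one may replace $m$ or $n$ by its negative whenever it is $\equiv -1\pmod 3$ so as to assume $m,n\equiv 1\pmod 3$. Factoring $m$ and $n$ into primary primes and expanding both sides of the desired identity as double products over the prime factors, it suffices to establish $\bigl(\tfrac{\pi}{\lambda}\bigr)_3 = \bigl(\tfrac{\lambda}{\pi}\bigr)_3$ for two distinct primary primes $\pi,\lambda\in\ring$.

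For the prime case I will introduce the cubic Gauss sum $g(\chi_\pi)=\sum_{x\bmod\pi}\chi_\pi(x)\psi(x)$ for a standard additive character $\psi$ on $\ring/\pi$, and use two classical identities: $|g(\chi_\pi)|^2=\N(\pi)$ and the cube factorization $g(\chi_\pi)^3=\pi\cdot J(\chi_\pi,\chi_\pi)$, where the Jacobi sum $J(\chi_\pi,\chi_\pi)$ lies in $\ring$ and is coprime to $\lambda$. The reciprocity is then extracted by computing $g(\chi_\pi)^{\N(\lambda)}\bmod\lambda$ in two different ways. On the one hand, writing $\N(\lambda)=1+3\cdot\tfrac{\N(\lambda)-1}{3}$ and applying the cube identity produces a factor $\pi^{(\N(\lambda)-1)/3}\equiv\bigl(\tfrac{\pi}{\lambda}\bigr)_3\pmod\lambda$. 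On the other hand, expanding the $\N(\lambda)$-th power of the sum in the residue field $\ring/\lambda$ and performing the change of variables $x\mapsto\lambda^{-1}x$ brings out the factor $\chi_\pi(\lambda)=\bigl(\tfrac{\lambda}{\pi}\bigr)_3$. Comparing the two expressions and cancelling the (invertible) Jacobi-sum factor yields the desired equality, first modulo $\lambda$ and then as an exact identity of cube roots of unity.

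The main obstacle will be executing the Frobenius-style computation on $g(\chi_\pi)^{\N(\lambda)}$ cleanly. One needs to track carefully how the additive character $\psi$ transforms under the change of variables and the $\N(\lambda)$-th power map in the residue field, and to treat separately the split case ($\lambda\overline{\lambda}=p$ a rational prime) and the rational-inert case ($\lambda=q$ with $q\equiv -1\pmod 3$), since in the latter $\N(\lambda)=q^2$ forces an additional Frobenius iteration. Once this bookkeeping is in place, the comparison of the two expressions above yields the reciprocity directly.
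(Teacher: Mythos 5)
The paper states this cubic reciprocity law without proof, as a classical recall (it is, e.g., Theorem~1 in Chapter~9 of Ireland--Rosen). So there is no in-paper argument to compare against; what matters is whether your sketch is sound.

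Your overall route --- reduce to distinct primary primes by multiplicativity and evenness of the symbol under $n\mapsto -n$, then run Eisenstein's Gauss-sum/Frobenius computation --- is exactly the classical proof, and the two-case split (split prime versus rational inert $q\equiv -1\pmod 3$) is also the right organization. However, two of the ``classical identities'' you invoke are stated incorrectly, and the error would derail the literal computation. First, the cube of the Gauss sum is $g(\chi_\pi)^3 = \N(\pi)\,J(\chi_\pi,\chi_\pi)$, \emph{not} $\pi\cdot J(\chi_\pi,\chi_\pi)$; with the Jacobi-sum normalization $J(\chi_\pi,\chi_\pi)=\pi$, the correct identity reads $g^3 = p\pi$ where $p=\N(\pi)$. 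As written, your formula gives $g^3=\pi^2$, which has the wrong absolute value ($|g|^2=p$ forces $|g^3|^2=p^3$, not $p^2$). Second, the Frobenius expansion $g^{\N(\lambda)}\equiv \sum_x\chi_\pi(x)\,\psi(\N(\lambda)x)$ and the change of variables $x\mapsto \N(\lambda)^{-1}x$ (not $\lambda^{-1}x$) extract the factor $\overline{\chi_\pi}\bigl(\N(\lambda)\bigr)$, not $\chi_\pi(\lambda)$ directly. In the split case $\N(\lambda)=\lambda\overline\lambda$, this is $\overline{\chi_\pi}(\lambda)\overline{\chi_\pi}(\overline\lambda)$, so the two sides of your comparison do not immediately yield $\bigl(\tfrac{\pi}{\lambda}\bigr)_3=\bigl(\tfrac{\lambda}{\pi}\bigr)_3$; one must also track the contribution of $p^{(\N(\lambda)-1)/3}\equiv\chi_\lambda(\pi)\chi_\lambda(\overline\pi)\pmod\lambda$ coming from the $\N(\pi)$ factor in $g^3$ and show that the extra ``conjugate'' factors cancel, which is precisely the fiddly bookkeeping step that the Ireland--Rosen proof spends a page on. These are fixable, but as stated the plan would not compile into a correct proof.
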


Let $\chi$ be a primitive cubic Hecke character to some modulus $\fm = m\ring$, co-prime to 3. The completed Hecke $L$-series is then defined by
\eqs{\Lambda (s,\chi) = (|d_K| N(m))^{s/2} (2\pi)^{-s} \Gamma(s) L(s,\chi),}
where $d_K=-3$ is the discriminant of $K$.
\begin{prop}[{\cite[VII. Cor. 8.6]{Neukirch}}]
The completed $L$-series above is entire, provided $\chi$ is primitive and $\fm \neq \ring$. Futhermore, it satisfies the functional equation
\eqs{\Lambda(s,\chi) = W(\chi) (\N\fm)^{-1/2} \Lambda(1-s,\overline{\chi})}
where 
\eqn{\label{GaussSumofL(s,chi)}
W(\chi)  = \su{x \mod \fm \\ (x,\fm)=1} \chi(x) \e \bigl(\tr(x/m\sqrt{-3})\bigr), }
where $x$ varies over a system of representatives of $\(\ring/\fm\)^\times$. 
\end{prop}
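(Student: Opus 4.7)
The plan is to prove this classical Hecke-type functional equation by the theta-function method, adapted to the imaginary quadratic field $K=\Q(\omega)$. The approach has three main steps: first, realize $\Lambda(s,\chi)$ as a Mellin transform of a theta series $\theta_\chi$ attached to $\chi$ and the lattice $\ring$; second, derive a modular transformation of $\theta_\chi$ under $y\mapsto 1/y$ via Poisson summation on the rank-$2$ lattice $m\ring\subset\C$, which is where the Gauss sum $W(\chi)$ will appear; third, split the Mellin integral at $y=1$ and use the transformation to produce an expression valid for all $s\in\C$ that exhibits the functional equation manifestly.

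For the first step, I would use the identity $(2\pi)^{-s}\Gamma(s)\N(\fa)^{-s} = \int_0^\infty e^{-2\pi y \N(\fa)}\, y^{s-1}\,dy$ for $\re(s)>1$ and each non-zero integral ideal $\fa$. Since $K$ has class number one and every ideal prime to $3$ has a unique generator $a\equiv 1\pmod 3$ (up to the six units, which are accommodated by the condition $\chi(\omega)=1$), summing over $\fa$ with multiplier $\chi(\fa)$ yields
\eqs{\Lambda(s,\chi) = (|d_K|\N\fm)^{s/2} \int_0^\infty \theta_\chi(y)\, y^{s-1}\,dy, \qquad \theta_\chi(y) = \su{a \in \ring,\ a\equiv 1\,(3) \\ (a,\fm)=1} \chi(a)\, e^{-2\pi y \N(a)}.}

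For the second step, I would decompose $\theta_\chi$ into residue classes modulo $\fm$: using $\chi(a+m\xi)=\chi(a)$ for $\xi\in\ring$, write $\theta_\chi(y) = \sum_{a\bmod\fm} \chi(a)\, \vartheta_a(y)$ where $\vartheta_a(y)=\sum_{\xi\in\ring} e^{-2\pi y \N(a+m\xi)}$. Applying Poisson summation on the lattice $m\ring\subset\C\simeq\R^2$ with the self-dual pairing $\langle x,y\rangle=\tr(xy/\sqrt{-3})$, the dual lattice of $m\ring$ is $m^{-1}\ring$, and the additive characters $\e(\tr(xa/m\sqrt{-3}))$ appear in the Fourier coefficients. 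Collecting terms and re-indexing the resulting double sum, the inner sum over $a\bmod\fm$ becomes exactly the Gauss sum $W(\chi)$ from \eqref{GaussSumofL(s,chi)}, producing the theta transformation $\theta_\chi(1/y) = W(\chi)(\N\fm)^{-1/2}\, y\, \theta_{\overline{\chi}}(y)$, with the archimedean normalisation factor $|d_K|^{1/2}=\sqrt{3}$ absorbed into the definition of $\Lambda(s,\chi)$.

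For the third step, since $\chi$ is primitive and $\fm\ne\ring$, the theta series $\theta_\chi(y)$ has no constant term and decays exponentially as $y\to\infty$. Splitting the Mellin integral at $y=1$ and substituting the transformation from step two on the piece over $(0,1)$, followed by the change of variable $y\mapsto 1/y$, gives
\bs{\Lambda(s,\chi) = (|d_K|\N\fm)^{s/2}\!\int_1^\infty \!\theta_\chi(y)\, y^{s-1}\,dy + W(\chi)(\N\fm)^{-\frac12}(|d_K|\N\fm)^{\frac{1-s}{2}}\!\int_1^\infty \!\theta_{\overline{\chi}}(y)\, y^{-s}\,dy.}
Both integrals converge absolutely for every $s\in\C$, establishing entire continuation, and the resulting expression is manifestly invariant under $(s,\chi)\mapsto(1-s,\overline{\chi})$ up to the factor $W(\chi)(\N\fm)^{-1/2}$, which is the claimed functional equation. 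The main technical obstacle is the Poisson summation of step two: one must compute the Fourier transform of a Gaussian multiplied by an additive character on the lattice $m\ring$ with the correct normalisations, so that the resulting exponential matches \emph{exactly} the additive character $\e(\tr(x/m\sqrt{-3}))$ appearing in \eqref{GaussSumofL(s,chi)}, and so that the archimedean factor $\sqrt{3}=|d_K|^{1/2}$ is absorbed in a way compatible with the normalisation of $\Lambda(s,\chi)$.
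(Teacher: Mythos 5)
The paper itself gives no proof here — it simply cites Neukirch~[VII, Cor.~8.6], whose argument is the classical Hecke theta-function method, which is exactly the strategy you propose; so the approach is the right one. There is, however, a genuine gap in your step~1 that then makes step~2 not go through as written. You define $\theta_\chi(y)$ as a sum over $a\equiv 1\pmod 3$ with $(a,\fm)=1$. Two things go wrong. First, since $(\fm,3)=1$, the Euler product/Dirichlet series $L(s,\chi)$ runs over \emph{all} ideals prime to $\fm$, including those divisible by the ramified prime $(1-\omega)$; such ideals have no generator $\equiv 1\pmod 3$ (already $(1-\omega)$ itself has none), so your theta series silently drops all of them and is not the Mellin preimage of $(2\pi)^{-s}\Gamma(s)L(s,\chi)$. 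Second, and fatally for step~2, the congruence $a\equiv 1\pmod 3$ is not preserved under translation by $m\ring$ (again because $(m,3)=1$), so the putative decomposition $\theta_\chi(y)=\sum_{a\bmod\fm}\chi(a)\vartheta_a(y)$ with $\vartheta_a(y)=\sum_{\xi\in\ring}e^{-2\pi y\,\N(a+m\xi)}$ is \emph{not} a decomposition of the $\theta_\chi$ you defined: the inner lattice sum leaves the class $1\pmod 3$ after the very first step, and your Poisson summation is being applied to a different series.

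The repair is standard and worth spelling out. Use the hypothesis that $\chi$ is a Hecke character, i.e.\ $\chi$ is trivial on $\ring^\times$ (the paper has $\chi(\omega)=1$, and $\chi(-1)=1$ automatically for a cubic character). Extend $\chi$ to a function on $\ring$ by $\chi(a):=\chi(a\ring)$ when $(a\ring,\fm)=1$ and $\chi(a):=0$ otherwise; this is well-defined and \emph{is} periodic modulo $\fm$, because two coprime-to-$\fm$ elements in the same residue class generate ideals in the same ray class mod $\fm$. Then set
\eqs{\theta_\chi(y)=\frac{1}{6}\sum_{a\in\ring}\chi(a)\,e^{-2\pi y\,\N(a)},}
the $1/6=|\ring^\times|^{-1}$ correcting the overcount over generators; this $\theta_\chi$ really does satisfy $(2\pi)^{-s}\Gamma(s)L(s,\chi)=\int_0^\infty\theta_\chi(y)y^{s-1}\,dy$, and now the residue-class decomposition mod $\fm$ and Poisson summation on the rank-$2$ lattice $m\ring$ in step~2 are applied to a genuine full-lattice sum, so they produce the Gauss sum $W(\chi)$ over $(\ring/\fm)^\times$ as in \eqref{GaussSumofL(s,chi)}. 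Two further points to keep straight, which you partly flagged: (i) the clean multiplicative form $\theta_\chi(1/y)=\text{const}\cdot y\,\theta_{\overline\chi}(y)$ only holds after rescaling $y$ by $\sqrt{|d_K|\,\N\fm}$, i.e.\ for $\tilde\theta_\chi(y):=\theta_\chi\bigl(y/\sqrt{|d_K|\N\fm}\bigr)$, which is exactly why the factor $(|d_K|\N\fm)^{s/2}$ appears in $\Lambda$; (ii) to see that your final two-integral formula is actually consistent with the asserted functional equation (rather than merely formally suggestive), you need $W(\chi)W(\overline\chi)=\chi(-1)\,\N\fm=\N\fm$ for primitive cubic $\chi$.
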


\subsection{Explicit Formula and averaging over the family}  \label{reduce-to-SF}

We now state the explicit formula for cubic L-functions, which relate sums over the zeroes to sums over the coefficients of the $L$-functions. Averaging over the family, we get the main term for the one-level density, and our results are obtained by bounding the error term.

We state but not prove the next two lemmas as their proofs are standard.
\begin{lem}  
Let $\chi$ be a primitive cubic Hecke character modulo $c\ring$. Then, uniformly for $s=\sigma + it$ satisfying $|t|\ge 1$ and $-1 \le \sigma  \le 2$, or that $1/2 < \sigma \le 2$, 
\eqn{\label{eq:L'/Lbound1}
\frac{L'}{L}(s, \chi) = \sum_{|\gamma - t| \le 1} \frac{1}{s-\rho} + O \bigl( \log \bigl (\N(c)(3+|t|)\bigr) \bigr)}
where the sum runs over the zeros $\rho = \beta +i\gamma$ of $\Lambda(s,\chi)$ counted with multiplicity.
\end{lem}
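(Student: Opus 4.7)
The plan is to follow the classical argument of de la Vallée Poussin adapted to Hecke $L$-functions over $K=\Q(\omega)$. Since $\chi$ is primitive and nontrivial, the proposition above makes $\Lambda(s,\chi)$ entire of order $1$, so Hadamard's factorisation gives
\eqs{\Lambda(s,\chi)=e^{A+Bs}\prod_\rho\left(1-\frac{s}{\rho}\right)e^{s/\rho}.}
Logarithmic differentiation together with the definition $\Lambda(s,\chi)=(|d_K|\N(c))^{s/2}(2\pi)^{-s}\Gamma(s)L(s,\chi)$ and Stirling's estimate $\Gamma'/\Gamma(s)=O\bigl(\log(3+|t|)\bigr)$ (valid in both ranges of $s$ considered) yield
\eqs{\frac{L'}{L}(s,\chi)=\sum_\rho\left(\frac{1}{s-\rho}+\frac{1}{\rho}\right)+B-\tfrac12\log(|d_K|\N(c))+O\bigl(\log(3+|t|)\bigr).}

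Next I would evaluate this identity at the anchor point $s_0=2+it$, at which the Dirichlet series of $L'/L$ converges absolutely, so $L'/L(s_0,\chi)=O(1)$. From the functional equation in the quoted proposition, a standard comparison of the Hadamard products of $\Lambda(s,\chi)$ and $\Lambda(1-s,\overline\chi)$ gives the identity $\re(B)=-\sum_\rho\re(1/\rho)$ (the series understood by pairing $\rho$ with $1-\overline\rho$). Taking real parts of the anchor identity, those two terms cancel, leaving
\eqs{\sum_\rho\re\frac{1}{s_0-\rho}=O\bigl(\log(\N(c)(3+|t|))\bigr).}
Since $\re(1/(s_0-\rho))=(2-\beta)/\bigl((2-\beta)^2+(t-\gamma)^2\bigr)\ge 1/(4+(t-\gamma)^2)$ for any zero $\rho=\beta+i\gamma$ with $0\le\beta\le 1$, this delivers the zero-density bound
\eqn{\label{zd-plan}
\sum_\rho\frac{1}{1+(t-\gamma)^2}\ll\log\bigl(\N(c)(3+|t|)\bigr),
}
and in particular the number of zeros with $|t-\gamma|\le 1$ is $O(\log(\N(c)(3+|t|)))$.

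Finally, I subtract the identity at $s_0$ from the identity at $s$ to cancel both the constant $B-\tfrac12\log(|d_K|\N(c))$ and the divergent series $\sum_\rho 1/\rho$, leaving
\eqs{\frac{L'}{L}(s,\chi)=\sum_\rho\left(\frac{1}{s-\rho}-\frac{1}{s_0-\rho}\right)+O\bigl(\log(\N(c)(3+|t|))\bigr).}
I then split the sum at $|t-\gamma|=1$. On $|t-\gamma|\le 1$ I keep $1/(s-\rho)$ (these are the terms retained in the lemma) and absorb the contributions $1/(s_0-\rho)$, each of size $O(1)$ since $|s_0-\rho|\ge 2-\beta\ge 1$, into the error using \eqref{zd-plan} to bound the number of such zeros. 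For $|t-\gamma|>1$, the telescoping estimate
\eqs{\left|\frac{1}{s-\rho}-\frac{1}{s_0-\rho}\right|=\left|\frac{s_0-s}{(s-\rho)(s_0-\rho)}\right|\ll\frac{1}{1+(t-\gamma)^2}}
combined with \eqref{zd-plan} contributes $O(\log(\N(c)(3+|t|)))$. The only delicate input is the identity $\re(B)=-\sum_\rho\re(1/\rho)$ coming from the functional equation; the remaining steps are routine applications of the Hadamard product and the absolute convergence of the Dirichlet series for $L'/L$ at $\sigma=2$.
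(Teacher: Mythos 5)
Your proof is correct and is precisely the standard de la Vallée Poussin argument (Hadamard product, anchor at $s_0=2+it$, real-parts trick to get the zero-density bound, then telescoping) that the authors have in mind when they write that this lemma's proof is "standard" and omit it; all the details you flag — $\re(B)=-\sum_\rho\re(1/\rho)$ from the functional equation, $\Gamma'/\Gamma=O(\log(3+|t|))$ in both ranges, and $|s_0-\rho|\ge 2-\beta\ge 1$ — check out for a primitive cubic Hecke $L$-function over $\Q(\omega)$.
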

\begin{lem} \label{lem:T1}
With the character as in the previous lemma, for any $T\ge 1$, there is some $T_1 \in [T,T+1]$ such that 
\eqn{\label{eq:L'/Lbound2}
\frac{L'}{L}(\sigma \pm i T_1, \chi) \ll \log^2 \bigl( \N(c)(3+T) \bigr) }
uniformly for $-1\le \sigma \le 2$.
\end{lem}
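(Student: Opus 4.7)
The proof I propose is a standard pigeonhole argument on the imaginary parts of the non-trivial zeros of $L(s,\chi)$. The essential input is a zero-counting estimate: the number of zeros $\rho = \beta + i\gamma$ of $\Lambda(s,\chi)$ with $T-1 \le \gamma \le T+2$ is $O(\log(\N(c)(3+T)))$. For Hecke $L$-functions over $K = \Q(\omega)$, this follows in the standard way from the functional equation combined with Jensen's formula (or the argument principle applied to a bounded rectangular contour), with the conductor norm $\N(c)$ playing the role of the modulus in the classical case over $\Q$; alternatively, one can extract it from \eqref{eq:L'/Lbound1} at $s = 2 + iT$ by comparison with the absolutely convergent Dirichlet series for $L'/L$.

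Given this count, set $N = \lceil C \log(\N(c)(3+T)) \rceil$ with $C$ a sufficiently large absolute constant, and partition $[T, T+1]$ into $N$ equal subintervals of length $1/N$. Because fewer than $N/2$ of these subintervals contain an ordinate $\gamma$ of some zero, at least one subinterval is zero-free, and I take $T_1$ at its midpoint. Then every zero $\rho = \beta + i\gamma$ of $\Lambda(s,\chi)$ with $|\gamma - T_1| \le 1$ satisfies $|T_1 - \gamma| \ge 1/(2N) \gg 1/\log(\N(c)(3+T))$.

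Finally, apply \eqref{eq:L'/Lbound1} with $s = \sigma \pm iT_1$: the sum on the right has $O(\log(\N(c)(3+T)))$ terms by the zero-counting estimate, and each term is bounded by $|s-\rho|^{-1} \le |T_1 - \gamma|^{-1} \ll \log(\N(c)(3+T))$ by the construction of $T_1$. Combining these estimates yields $L'/L(\sigma \pm iT_1, \chi) \ll \log^2(\N(c)(3+T))$ uniformly for $-1 \le \sigma \le 2$, as claimed. There is no real obstacle here: the only step beyond routine bookkeeping is the zero-counting bound, which is a direct adaptation of the classical argument for Dirichlet $L$-functions and presents no new difficulties in the Eisenstein setting.
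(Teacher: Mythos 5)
Your approach is the standard one, and since the paper explicitly declines to prove this lemma on the grounds that the proof is ``standard,'' it is indeed what the authors have in mind: pigeonhole on zero ordinates using a $O(\log(\N(c)(3+T)))$ zero-count in a unit-height window, then feed the resulting lower bound on the distance from $T_1$ to the nearby ordinates into \eqref{eq:L'/Lbound1}. The zero-count itself can be extracted from \eqref{eq:L'/Lbound1} at $\sigma = 2$ by taking real parts, exactly as you suggest.

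There is, however, one small gap to fill. The lemma asserts the bound at both heights $+T_1$ and $-T_1$, but your pigeonhole only guards against ordinates $\gamma$ near $+T$. For $s = \sigma - iT_1$, the sum in \eqref{eq:L'/Lbound1} runs over zeros with $|\gamma + T_1| \le 1$, so the relevant denominators are $|(-T_1) - \gamma| = |T_1 + \gamma|$, not $|T_1 - \gamma|$ as written in your last step. Since $\chi$ is a non-real (cubic) character, $L(s,\chi)$ does not have conjugate-symmetric zeros, so a $T_1$ chosen to avoid the ordinates near $+T$ does not automatically keep $-T_1$ away from the ordinates near $-T$. The repair is routine: run the pigeonhole simultaneously over the ordinates $\gamma \in [T-1, T+2]$ and the reflected ordinates $-\gamma$ for $\gamma \in [-T-2, -T+1]$; the combined count is still $O(\log(\N(c)(3+T)))$, a zero-free subinterval for the union still exists, and the rest of your argument then goes through unchanged for both signs.
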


\begin{lem}[Explicit Formula] \label{lem:ExplicitFormula}
Let $\chi$ be a primitive cubic Hecke character modulo $n\ring$ with $(n,3)=1$, and let $\phi(x)$ be an even function of Schwartz class on $\R$
whose Fourier transform $\phat(y)$ has compact support in $(-v,v)$. Then, 
\mult{\sum_{\rho} \phi \Bigl( \frac{(\rho - 1/2)\log X}{2 \pi i} \Bigr)  =
\phat(0) \frac{\log \N(n)}{\log X} + O \Bigl( \frac 1 {\log X} \Bigr)  \\
- \sum_{\fp} \sum_{1 \le k \le 2} \(\chi (\fp^k) + \chi (\fp^{2k})\)
 \phat \left( \frac{k \log \N\fp}{\log X} \right)
\frac{\log \N\fp}{(\N\fp)^{k/2} \log X}, }
where the implied constant depends only on $\phi$. 
\end{lem}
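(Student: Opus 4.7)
The plan is to prove Lemma~\ref{lem:ExplicitFormula} by the classical contour-integration argument, using two key inputs: $\Lambda(s,\chi)$ is entire since $\chi$ is primitive and non-trivial, and $\phat$ has compact support, so by Paley--Wiener $\phi$ extends to an entire function with rapid decay on every horizontal strip. Setting $F(s) = \phi\bigl((s-1/2)\log X/(2\pi i)\bigr)$, which inherits that decay, I will apply the residue theorem to $F(s)(\Lambda'/\Lambda)(s,\chi)$ around the rectangle with vertices $-1 \pm iT_1$ and $2 \pm iT_1$, choosing $T_1 \in [T, T+1]$ so that Lemma~\ref{lem:T1} applies, and let $T \to \infty$; the horizontal sides vanish by combining the $\log^2$-bound of Lemma~\ref{lem:T1} with the rapid horizontal decay of $F$, and the residues produce $\sum_\rho F(\rho)$.

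Next, the functional equation gives $(\Lambda'/\Lambda)(s,\chi) = -(\Lambda'/\Lambda)(1-s,\overline{\chi})$, while the evenness of $\phi$ gives $F(1-s) = F(s)$. Substituting $s \mapsto 1-s$ in the integral along $\mathrm{Re}(s) = -1$ turns it into a second copy of the integral along $\mathrm{Re}(s) = 2$ with $\overline{\chi}$ in place of $\chi$, yielding
\[ \sum_\rho F(\rho) = \frac{1}{2\pi i}\int_{(2)} \Bigl[\frac{\Lambda'}{\Lambda}(s,\chi) + \frac{\Lambda'}{\Lambda}(s,\overline{\chi})\Bigr] F(s)\,ds. \]
I then decompose $\Lambda'/\Lambda(s,\chi) = \tfrac12 \log(|d_K|\N(n)) - \log(2\pi) + (\Gamma'/\Gamma)(s) + (L'/L)(s,\chi)$. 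Shifting the contour for the constant piece to $\sigma = 1/2$ and evaluating $(2\pi i)^{-1}\int_{(1/2)} F(s)\,ds = \phat(0)/\log X$ by direct parametrization gives, after doubling and using $|d_K|=3$, the main term $\phat(0)\log \N(n)/\log X + O(1/\log X)$. The pieces from $\log(2\pi)$ and $\Gamma'/\Gamma$ are absorbed into the $O(1/\log X)$ error via Stirling on $\sigma = 1/2$ combined with the horizontal decay of $F$.

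For the $L'/L$ piece I expand via the Dirichlet series on $\sigma = 2$, interchange sum and integral, and change variables $s = 1/2 + 2\pi i z/\log X$; Paley--Wiener allows the $z$-contour to be shifted back to the real axis, so that the resulting integral is recognized as a Fourier transform:
\[ \frac{1}{2\pi i}\int_{(2)} \frac{L'}{L}(s,\chi) F(s)\,ds = -\sum_{\fp}\sum_{k \ge 1} \frac{\chi(\fp^k)\log \N\fp}{(\N\fp)^{k/2}\log X}\, \phat\!\Bigl(\frac{k\log \N\fp}{\log X}\Bigr). \]
The companion $\overline{\chi}$-sum contributes $\chi(\fp^k) + \overline{\chi}(\fp^k) = \chi(\fp^k) + \chi(\fp^{2k})$ since $\chi^3 = 1$, while the terms with $k \ge 3$ are collectively $O(1/\log X)$ by absolute convergence of $\sum_\fp (\log \N\fp)/(\N\fp)^{3/2}$. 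What remains is exactly the $k \in \{1,2\}$ sum in the statement.

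I expect no serious obstacle here: the work is bookkeeping, namely verifying that the horizontal contour pieces vanish (Lemma~\ref{lem:T1} paired with Paley--Wiener decay of $F$) and that the archimedean factors truly contribute only $O(1/\log X)$. The argument is entirely parallel to the classical explicit formula for Dirichlet $L$-functions over $\Q$, with $\log \N(n)$ in the role of $\log q$ and the Euler product of $L(s,\chi)$ over prime ideals of $\ring$ replacing the usual product over rational primes.
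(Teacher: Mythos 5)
Your proposal follows exactly the same route as the paper's proof: you set up the same test function $F(s)=\phi\bigl((s-1/2)\log X/(2\pi i)\bigr)$, integrate $F\cdot\Lambda'/\Lambda$ around the same rectangle using Lemma~\ref{lem:T1} to choose $T_1$, invoke the functional equation and the evenness of $\phi$ to fold the $\sigma=-1$ side onto $\sigma=2$, decompose $\Lambda'/\Lambda$ into the conductor, archimedean, and $L'/L$ pieces, shift to the critical line, and truncate the prime sum at $k\le 2$ by absolute convergence. The only cosmetic difference is that the paper controls the $\Gamma'/\Gamma$ contribution via the quoted digamma approximation rather than your invocation of Stirling, but these yield the same $O(1/\log X)$ bound.
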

\begin{proof}
Note that 
\eqs{G(s) := \phi \Bigl( \Bigl( s - \frac{1}{2} \Bigr) \frac{\log X}{2\pi i} \Bigr),}
is holomorphic in $-1 \le \re(s) \le 2$ and satisfies
\eqn{ \label{eq:G(s)}
G(s) = G(1-s), \qquad s^2 G(s) \ll 1.}

Let $T$ be a large real number and $T_1 \in [T, T+1]$ be as in Lemma \ref{lem:T1}. Let $R$ be the rectangle with vertices $2-i T_1, 2+iT_1, -1+iT_1, -1-iT_1$. By Cauchy's Residue Theorem we obtain
\eqs{\sum_{\rho} G(\rho) = \frac{1}{2\pi i} \int_R G(s) 
\frac{\Lambda'}{\Lambda} (s, \chi) ds,}
where the integral is taken counter-clockwise around $R$. By Lemma \ref{lem:T1} and \eqref{eq:G(s)}, the contribution of the horizontal integrals is $\ll T^{-2} \log^2 (T\N(c))$.  Thus, taking limit as $T\to \infty$ and using the functional equations for $G(s)$ and $\Lambda (s, \chi)$, we obtain
\bs{ %\label{eq:shiftto1/2}
\sum_{\rho} G(\rho) &= 
\frac{1}{2\pi i} \int_{\sigma=2} G(s) 
\biggl(\frac{\Lambda'}{\Lambda} (s, \chi) + \frac{\Lambda'}{\Lambda} (s, \overline{\chi}) \biggr) ds \\
&= \frac{1}{2\pi i} \int_{\sigma=2} G(s) \biggl( \frac{L'}{L}(s, \chi) + \frac{L'}{L}(s, \overline{\chi}) + 2\frac{\Gamma'}{\Gamma} (s) + \log \frac{3\N(c)}{4\pi^2} \biggr) ds.}
Using \eqref{eq:G(s)} again we can shift the contour to $\sigma=1/2$ and conclude that
\eqs{\sum_{\rho} G(\rho) = F(1) \log \frac{3\N(c)}{4\pi^2} + \frac{1}{2\pi i} \int_{(1/2)}
2 G(s) \frac{\Gamma'}{\Gamma} (s) ds - \sum_\fp H(p),}
where
\eqs{H(\fp) = \sum_{n \ge 1} \bigl( \chi (\fp^n) +
\overline{\chi} (\fp^n) \bigr) F(\N(\fp^n)) \log \N\fp}
and
\eqs{F(y) = \frac{1}{2\pi i} \int_{\sigma=1/2} G(s) y^{-s} ds = \phat \Bigl( \frac{\log y}{\log X} \Bigr) / (\sqrt y \log X).}

By the approximate formula (cf.~\cite[8.363.3]{GradRyzhik})
\eqs{\frac{\Gamma'}{\Gamma}\, (a + ib) + \frac{\Gamma'}{\Gamma}\,
(a-ib) = 2 \,\frac{\Gamma'}{\Gamma} (a) + O \left( (b/a)^2
\right)}
we obtain
\bs{\frac{1}{\pi i} \int_{\sigma=1/2} G(s) \frac{\Gamma'}{\Gamma}(s) ds
&= \frac 2 {\log X} \int_{-\infty}^{\infty} \phi(t) \frac{\Gamma'}{\Gamma}  \(\frac{1}{2} + i \frac{2\pi t}{\log X}\)
 dt\\
&= \frac{2 (\Gamma'/\Gamma) (1/2)}{\log X} \phat(0) + O
\((\log X)^{-3} \).
}
Finally, noting that 
\eqs{\sum_{\fp} \sum_{k>2} \left( \chi (\fp^k) + \chi (\fp^{2k})
\right) \phat \left( \frac{k\log \N\fp}{\log X} \right)
\frac{\log \N\fp}{(\N\fp)^{k/2}} \ll \sum_p p^{-3/2} \log p \ll 1}
proves the claimed result.

\end{proof}

Let $\sF$ be one of the families that will be defined in the next two sections. Recall that
\eqs{\cD(X;\phi, \sF) = \frac 1 {\sA_\sF (X)} \sum_{\chi \in \sF} w\left( \frac{\N(\cond(\chi))}{X} \right)  \su{\gamma \\ L(1/2+i\gamma, \chi) = 0}\phi \Bigl( \frac{\gamma \log X}{2 \pi} \Bigr),}
where $\N(\cond(\chi))$ is the norm of the conductor of the primitive character $\chi$ and
\eqs{\sA_\sF (X) = \sum_{\chi \in \sF} w\left( \frac{\N(\cond(\chi))}{X} \right).}
Using the explicit formula, Lemma \ref{lem:ExplicitFormula}, we obtain
\bsc{\label{after-EF}
\cD(X; \phi, \sF)  &= \frac{\phat(0) }{\sA_\sF (X) \log X} \sum_{\chi \in \sF} w\left(\frac{\N ( \cond(\chi))}{X} \right) \log \N( \cond(\chi) ) \\
&\quad + O \Bigl( \frac {X^{v/2} + |\cE_\sF (X)|}{\sA_\sF (X)\log X} \Bigr), }
where
\eqn{\label{EF(X)}
\cE_\sF  (X) = \sum_{\chi \in \sF\cup \{ 1\}} w\left( \frac{\N (\cond(\chi))}{X} \right) 
\sum_{\fp \nmid 3} \sum_{1 \le k \le 2} \chi (\fp) \phat \left( \frac{k \log \N\fp}{\log X} \right)
\frac{\log \N\fp}{(\N\fp)^{k/2}}.}
To establish Theorems \ref{thm-main} and \ref{thm-trivial}, we need to find the largest $v$ such that $\ef (X) = o(\sA_\sF (X) \log X)$ for each family in question. Thus, the rest of the paper will be devoted to the estimate of the sum
\eqn{\label{SF(y)}
S_\sF (y) = \sum_{\chi \in \sF\cup \{ 1\}} w(\cond(\chi)/X) \su{\N\fp \le y\\ \fp \nmid 3}  \chi (\fp) \log \N\fp.
}

\subsection{Cubic Gauss sums and Poisson Summation for cubic characters}

We now define the generalized Gauss sums associated with the cubic residue symbols $\chi_n$, where $n \equiv 1 \mod 3 \in \ring$. Notice that we do not suppose that $n$ is square-free, and  these characters are not necessarily primitive.
Let
\eqn{\label{ShiftedGaussSum}
g(r,n) = \sum_{\alpha \mod n} \chi_n (\alpha) \e \bigl(\tr( r \alpha/n)\bigr).}
Then, for $(n, \sqrt{-3})=1$, 
\eqs{W(\chi_{n}) = \chi_n(\sqrt{-3}) g(1,n),}
where $W(\chi)$ is the sign of the functional equation given by \eqref{GaussSumofL(s,chi)}.

The following two lemmas are classical results about cubic Gauss sums which can be found in \cite{HBP1979}, or easily checked, and we include them without proof.
\begin{lem} \label{cubic-GS-lemma1} 
Let $n, n_1, n_2 \equiv 1 \mod 3$ and $s, r$ be elements of $\Z[\omega]$. \\ 
If $(s, n)=1$, 
\eqs{ %\label{remove-shift}
g(rs, n) = \overline{\chi_{n}}(s) g(r,n).} 
If $(n_1, n_2)=1$, 
\eqs{ %\label{almost-mult}
g(r, n_1 n_2) = \overline{\chi_{n_2}}(n_1) g(r,n_1) g(r, n_2) = g(rn_1, n_2) g(r, n_1).}
\end{lem}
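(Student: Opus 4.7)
The plan is to derive both identities directly from the definition
$$g(r,n) = \sum_{\alpha \bmod n}\chi_n(\alpha)\,e(\tr(r\alpha/n)),$$
using only multiplicativity of the cubic residue symbol, the Chinese Remainder Theorem, and the cubic reciprocity lemma stated just before.

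For the first identity, I would apply the change of variables $\beta \equiv s\alpha \pmod{n}$ inside $g(rs,n)$. Since $(s,n)=1$, this is a bijection on $\ring/n\ring$, and
$$\chi_n(\alpha) = \chi_n(s^{-1})\chi_n(\beta) = \overline{\chi_n}(s)\chi_n(\beta),$$
because $\chi_n^3 = 1$ on elements coprime to $n$. Pulling $\overline{\chi_n}(s)$ out of the sum yields $g(rs,n) = \overline{\chi_n}(s)\,g(r,n)$.

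For the second identity, I would parameterize residues mod $n_1 n_2$ by writing $\alpha \equiv \alpha_1 n_2 + \alpha_2 n_1 \pmod{n_1 n_2}$ with $\alpha_i$ ranging over residues mod $n_i$; since $(n_1,n_2)=1$, this is a bijection by CRT. The exponential then splits as
$$e(\tr(r\alpha/(n_1 n_2))) = e(\tr(r\alpha_1/n_1))\,e(\tr(r\alpha_2/n_2)),$$
and by multiplicativity $\chi_{n_1 n_2} = \chi_{n_1}\chi_{n_2}$, the character factors as
$$\chi_{n_1 n_2}(\alpha) = \chi_{n_1}(\alpha_1 n_2)\,\chi_{n_2}(\alpha_2 n_1) = \chi_{n_1}(n_2)\chi_{n_2}(n_1)\,\chi_{n_1}(\alpha_1)\chi_{n_2}(\alpha_2).$$
Collecting these identifications produces $g(r,n_1 n_2) = \chi_{n_1}(n_2)\chi_{n_2}(n_1)\,g(r,n_1)\,g(r,n_2)$. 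By cubic reciprocity, $\chi_{n_1}(n_2) = \chi_{n_2}(n_1)$, so the prefactor equals $\chi_{n_2}(n_1)^2 = \overline{\chi_{n_2}}(n_1)$, establishing the first equality. The second equality then follows at once by applying the already-proved first identity of the lemma with modulus $n_2$ and multiplier $s=n_1$: $g(rn_1,n_2) = \overline{\chi_{n_2}}(n_1)\,g(r,n_2)$.

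Both steps are short, formal manipulations; the only place any real content is used is the invocation of cubic reciprocity to simplify $\chi_{n_1}(n_2)\chi_{n_2}(n_1)$ to $\overline{\chi_{n_2}}(n_1)$. I do not expect any obstacle.
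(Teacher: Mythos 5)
Your proof is correct. The paper states this lemma without proof, calling it a classical result (citing Heath-Brown and Patterson) that is ``easily checked'' — and your argument is precisely the standard check: the substitution $\beta = s\alpha$ for the first identity, the CRT parametrization $\alpha = \alpha_1 n_2 + \alpha_2 n_1$ for the second, and the cubic reciprocity lemma stated just before to reduce $\chi_{n_1}(n_2)\chi_{n_2}(n_1)$ to $\chi_{n_2}(n_1)^2 = \overline{\chi_{n_2}}(n_1)$. Every step is valid; in particular the use of $\chi_n^3 = 1$ to pass from $\chi_n(s)^{-1}$ to $\overline{\chi_n}(s)$ and the deduction of the final equality from the already-proved first identity with $n=n_2$, $s=n_1$ are exactly right.
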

\begin{lem} \label{cubic-GS-lemma2}
Let $\pi \equiv 1 \mod 3$ be a prime,  $(\pi, r)=1$, where $r\equiv 1 \mod 3$. Let $k, j$ be integers with $k > 0$ and $j \ge 0$.

If $k=j+1$, 
\eqs{g(r\pi^j,\pi^k) = \N(\pi^j) \times \ar{-1 & 3\mid k\\[1mm] g(r,\pi) & k\equiv 1 \mod 3 \\[1mm] \overline{g(r,\pi)} & k \equiv 2 \mod 3.  }}

If $k \neq j+1$,
\eqs{g(r\pi^j,\pi^k) = \begin{cases}
\varphi_K(\pi^k)& \text{if } 3\mid k, \, k \le j \\ 0 & \text{otherwise.}  
\end{cases}}
\end{lem}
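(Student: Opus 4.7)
The plan is to compute $g(r\pi^j,\pi^k)$ directly from the definition by a case analysis governed by the comparison of $k$ with $j+1$. The key structural observation is that $\chi_{\pi^k}(\alpha)=\chi_\pi(\alpha)^k$ depends only on $\alpha$ modulo $\pi$, while the exponential $e(\tr(r\pi^j\alpha/\pi^k))$ is trivial when $k\le j$ and has ``denominator'' $\pi^{k-j}$ when $k>j$. The three regimes $k\le j$, $k=j+1$, and $k\ge j+2$ will correspond exactly to the three outputs stated in the lemma.

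The case $k\le j$ is immediate: $r\pi^j/\pi^k=r\pi^{j-k}\in\ring$, so the exponential is identically $1$ and the sum collapses to $\sum_{\alpha\in(\ring/\pi^k)^\times}\chi_\pi(\alpha)^k$. By character orthogonality this equals $\varphi_K(\pi^k)$ when $3\mid k$ (i.e.\ when $\chi_\pi^k$ is trivial) and vanishes otherwise. For $k\ge j+2$, set $m=k-j\ge 2$ and exploit the gap between the conductor of the character and the denominator of the exponential by translating $\alpha\mapsto\alpha(1+\pi^{m-1}u)$ with $u\in\ring/\pi$. This map is a bijection of $(\ring/\pi^k)^\times$; since $1+\pi^{m-1}u\equiv 1\pmod{\pi}$ the character is preserved, while the exponential acquires a factor $e(\tr(r\alpha u/\pi))$. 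Averaging over $u\bmod\pi$ then yields $\N(\pi)\,g(r\pi^j,\pi^k)=\sum_\alpha\chi_\pi(\alpha)^k e(\tr(r\alpha/\pi^m))\sum_u e(\tr(r\alpha u/\pi))=0$, the inner sum vanishing since $(r\alpha,\pi)=1$.

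The case $k=j+1$ is the substantive one. Writing $\alpha=\alpha_0+\pi\beta$ with $\alpha_0\in(\ring/\pi)^\times$ and $\beta\in\ring/\pi^j$, neither $\chi_\pi(\alpha)^k$ nor $e(\tr(r\alpha/\pi))$ depends on $\beta$, so the $\beta$-sum produces a clean factor of $\N(\pi^j)$, leaving $\N(\pi^j)\sum_{\alpha_0}\chi_\pi(\alpha_0)^k e(\tr(r\alpha_0/\pi))$. I would then identify the inner sum in each residue class of $k\bmod 3$: for $3\mid k$ it is a classical Ramanujan sum equal to $-1$ because $(r,\pi)=1$; for $k\equiv 1\pmod 3$ it is precisely $g(r,\pi)$ by definition; and for $k\equiv 2\pmod 3$ one uses $\chi_\pi^2=\overline{\chi_\pi}$ together with the substitution $\alpha_0\mapsto -\alpha_0$ and $\chi_\pi(-1)=1$ (which follows from $\chi_\pi(-1)^2=\chi_\pi(1)=1$ and $\chi_\pi(-1)\in\mu_3$) to recognise the sum as $\overline{g(r,\pi)}$.

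The only step demanding genuine care is matching the three subcases of $k=j+1$ to the three formulas in the lemma, particularly the $k\equiv 2\pmod 3$ identification, which involves the small manipulation with complex conjugation and $\chi_\pi(-1)$. The other two regimes are essentially routine: $k\le j$ is pure character orthogonality, and $k\ge j+2$ is the standard additive-character vanishing trick exploiting the size difference between the conductors of the multiplicative and additive parts of the integrand.
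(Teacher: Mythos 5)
Your proof is correct and complete. The paper states this lemma without proof (noting it is classical and ``easily checked''), and your three-regime direct computation --- orthogonality for $k\le j$, the shift $\alpha\mapsto\alpha(1+\pi^{m-1}u)$ exploiting the conductor gap for $k\ge j+2$, and the reduction to $(\ring/\pi)^\times$ together with $\chi_\pi(-1)=1$ for $k=j+1$ --- cleanly supplies the verification the paper omits.
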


Our next lemma is obtained from the Poisson summation formula over $\Z^2$, which is essentially \cite[Lemma 10]{HB2000}. 
\begin{lem} \label{lem:Poisson} 
Let $\chi$ be a primitive character of $(\ring/\ff \ring)^\times$. Then,
\eqs{\sum_{n \in \ring} \chi (n) w(\N(n)/Y) = \frac Y {W(\overline{\chi})} \sum_{n \in \ring} \overline{\chi} (n) \widehat{w}( \sqrt{ Y\N(n)/\N( \ff)}), }
where $W(\chi)$ is given by \eqref{GaussSumofL(s,chi)} and for $t \in \R, t \ge 0$,
\begin{align*}
\widehat {w} (t) &= \int_\R \int_\R w(\N(x+y\omega)) \e \bigl(\tr( t (x+y\omega)/\sqrt{-3})\bigr) dx dy, \\ 
&= \int_\R \int_\R w(\N(x+y\omega)) \e (- ty) dx dy. 
\end{align*}
\end{lem}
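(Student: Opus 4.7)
The plan is to apply the classical two-dimensional Poisson summation on $\Z^2 \cong \ring$ after partitioning the outer sum into residue classes modulo $\ff = m\ring$. The core task is to keep careful track of the Jacobians, identify the Euclidean dual frequencies with lattice points of $K$ via the inverse different $\mathfrak{d}^{-1}=(\sqrt{-3})^{-1}\ring$, and assemble the residual character sum as a twisted Gauss sum. Concretely, I would first write
\eqs{\sum_{n\in\ring}\chi(n)w(\N(n)/Y)=\sum_{r\bmod m}\chi(r)\sum_{k\in\ring}w(\N(r+mk)/Y),}
and parameterize $k=c+d\omega\mapsto(c,d)\in\Z^2$, so that the inner sum is a sum of a Schwartz function $F_r$ on $\Z^2$. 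Poisson summation then gives $\sum_{(c,d)\in\Z^2}F_r(c,d)=\sum_{(u,v)\in\Z^2}\widehat F_r(u,v)$.

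Next I would evaluate $\widehat F_r(u,v)$ by the linear change of variables $x+y\omega=r+m(c+d\omega)$. Multiplication by $m$ on $\R^2$ has determinant $\N(m)$, so this produces the factor $\N(m)^{-1}$. A second change $(x,y)\mapsto\sqrt Y(x,y)$ extracts a factor $Y$ and normalizes the argument of $w$. The resulting Fourier integral pairs the frequency $(u,v)$ with $(x,y)$ via a character of $\ring/m\ring$ which, by the duality coming from the inverse different, can be written uniquely as $r\mapsto e(\tr(rn/(m\sqrt{-3})))$ for a unique $n\in\ring$ up to $m$ (and the identification extends to all of $\Z^2\leftrightarrow\ring$). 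Since $w(\N(\cdot))$ is rotation-invariant on $\C\cong\R^2$, its two-dimensional Fourier transform depends only on the Euclidean length of the frequency; the relation $\tr((x+y\omega)/\sqrt{-3})=-y$ then pins down the normalization and shows that the integral equals $\widehat w(\sqrt{Y\N(n)/\N(\ff)})$ with $\widehat w$ as in the second displayed form of the lemma.

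Finally I would assemble the sum over residues. For $(n,m)=1$, the substitution $r\mapsto rn^{-1}\pmod m$ together with the definition \eqref{GaussSumofL(s,chi)} yields
\eqs{\sum_{r\bmod m}\chi(r)e(\tr(rn/(m\sqrt{-3})))=\overline\chi(n)W(\chi),}
while primitivity of $\chi$ forces the sum to vanish when $(n,m)>1$. Since cubic characters satisfy $\chi(-1)=1$ and $|W(\chi)|^2=\N(m)$, one has $W(\chi)W(\overline\chi)=\N(m)$, so the accumulated prefactor $W(\chi)/\N(m)$ collapses to $1/W(\overline\chi)$, matching the stated formula. The main obstacle is the bookkeeping in the middle step: verifying that the Poisson dual on $\R^2$ is exactly the arithmetic dual $(m\ring)^{*}=\frac{1}{m\sqrt{-3}}\ring$ parameterized by $n\in\ring$, so that the Fourier transform of $w(\N(\cdot))$ evaluates at the correct magnitude $\sqrt{Y\N(n)/\N(\ff)}$ rather than some constant multiple thereof; all other steps are routine once this identification is in place.
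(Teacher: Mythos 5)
The paper does not actually prove this lemma; it is stated and attributed to Heath-Brown (\cite[Lemma 10]{HB2000}), so there is no in-paper proof to compare against. Your outline follows exactly the route Heath-Brown's argument takes and that any proof would take: split into residue classes modulo $m$, apply two-dimensional Poisson summation, identify the dual lattice of $m\ring$ under the trace pairing with $(m\sqrt{-3})^{-1}\ring$ via the inverse different, change variables to exhibit the Jacobian $\N(m)^{-1}$ and the dilation factor $Y$, use the $\mathrm{SO}(2)$-invariance of $w(\N(\cdot))$ to see the radial Fourier transform depends only on $\sqrt{Y\N(n)/\N(\ff)}$, and collapse the residue sum into $\overline{\chi}(n)W(\chi)$ with vanishing for $(n,m)>1$ by primitivity. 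You also correctly single out the dual-lattice bookkeeping as the step most worth double-checking. This is the correct and standard approach.

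One small caveat worth flagging: in the final step you invoke $W(\chi)W(\overline{\chi})=\N(\ff)$, which is equivalent to $\chi(-1)=1$, and you justify it by appealing to $\chi$ being cubic. The lemma is stated for an \emph{arbitrary} primitive character of $(\ring/\ff\ring)^\times$, where in general one only has $W(\chi)W(\overline{\chi})=\chi(-1)\N(\ff)$ (equivalently $\overline{W(\chi)}=\chi(-1)W(\overline{\chi})$). A cleaner argument avoids assuming $\chi(-1)=1$ and instead writes the prefactor after Poisson as $W(\chi)/\N(\ff)=1/\overline{W(\chi)}$, and then tracks the sign in the Poisson dual pairing $e(-\tr(rn/(m\sqrt{-3})))$, whose evaluation produces a compensating $\chi(-1)$. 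Practically this does not matter here: every character the paper applies the lemma to is a product of a cubic residue symbol (odd order, so trivial on $-1$) with a ray class character modulo $9$ (trivial on units when lifted to elements), so $\chi(-1)=1$ always holds in the application. But a reader taking the lemma at its stated generality would want the $\chi(-1)$-bookkeeping made explicit rather than sidestepped.
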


We will often use the following lemma, which makes it easier to keep track of the size of various parameters when optimizing an estimate of the
form
$
U \ll AH^a + BH^{-b},$ where $A, B, a, b$ are positive constants and $H$ can be chosen. Simply choosing $H$ to satisfy $AH^a = BH^{-b}$ leads to $U \ll (A^b B^a)^{1/(a+b)},$ which is the best bound apart for the value of the other parameters in the implied constant. This can be generalized to the following lemma.
\begin{lem}[{\cite[Lemma 2.4]{GraKol}}] \label{lemma-kolesnik} 
Suppose that $$L(H) = \sum_{i=1}^m A_i H^{a_i} + \sum_{j=1}^n B_j H^{-b_j},$$ where
$A_i, B_j, a_i, b_j$ are positive. Suppose that $H_1 \le H_2$. Then, there is some $H$ with $H_1 \le H \le H_2$ such that
$$
L(H) \ll \sum_{i=1}^m \sum_{j=1}^n \left(A_i^{b_j} B_j^{a_i}\right)^{1/(a_i+b_j)} + \sum_{i=1}^m A_i H_1^{a_i} + \sum_{j=1}^n B_j H_2^{-b_j},
$$
where the implied constant depends only on $m$ and $n$.
\end{lem}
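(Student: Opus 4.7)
The plan is to decompose $L(H) = f(H) + g(H)$ where $f(H) = \sum_{i=1}^m A_i H^{a_i}$ is strictly increasing on $(0,\infty)$ and $g(H) = \sum_{j=1}^n B_j H^{-b_j}$ is strictly decreasing, and then to choose $H \in [H_1, H_2]$ so that $f$ and $g$ roughly balance. First I would handle two easy cases. If $f(H_1) \ge g(H_1)$, take $H = H_1$; monotonicity of $g$ gives $L(H_1) \le 2 f(H_1) = 2\sum_i A_i H_1^{a_i}$, which is absorbed by the second sum on the right-hand side of the claimed inequality. Symmetrically, if $g(H_2) \ge f(H_2)$, take $H = H_2$ and obtain $L(H_2) \le 2 g(H_2) = 2\sum_j B_j H_2^{-b_j}$, absorbed by the third sum.

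Otherwise $f(H_1) < g(H_1)$ and $f(H_2) > g(H_2)$, so by the intermediate value theorem there exists $H^* \in (H_1, H_2)$ with $f(H^*) = g(H^*) =: S$, and hence $L(H^*) = 2S$. The task is then to bound $S$ term-by-term by the diagonal double sum. For each fixed $i$, I would use $A_i H^{*a_i} \le S = g(H^*) \le n \max_{j} B_j H^{*-b_j}$, and if $j^*$ attains this maximum, rearranging $A_i H^{*\,a_i + b_{j^*}} \le n B_{j^*}$ into $H^* \le (nB_{j^*}/A_i)^{1/(a_i+b_{j^*})}$ and substituting back gives
\[ A_i H^{*a_i} \le n^{a_i/(a_i+b_{j^*})} \bigl(A_i^{b_{j^*}} B_{j^*}^{a_i}\bigr)^{1/(a_i+b_{j^*})} \le n \sum_{j=1}^n \bigl(A_i^{b_j} B_j^{a_i}\bigr)^{1/(a_i+b_j)}. \]
Summing over $i$ yields $L(H^*) \le 2n \sum_{i,j} (A_i^{b_j} B_j^{a_i})^{1/(a_i+b_j)}$, with implied constant depending only on $m$ and $n$, as required.

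There is no serious obstacle. The one conceptual point worth emphasizing is that the correct balancing condition is $A_i H^{a_i} = B_j H^{-b_j}$, whose crossover value $H = (B_j/A_i)^{1/(a_i+b_j)}$ equalizes the two terms at exactly $(A_i^{b_j} B_j^{a_i})^{1/(a_i+b_j)}$. Minimizing $A_i H^{a_i} + B_j H^{-b_j}$ instead would introduce constants such as $(a_i/b_j)^{a_i/(a_i+b_j)}$ that depend on the exponents, which is exactly what we must avoid. The factor $n^{a_i/(a_i+b_{j^*})}$ arising in the interior case is harmlessly bounded by $n$ because the exponent lies in $(0,1)$, guaranteeing that the final constant depends on $m$ and $n$ alone.
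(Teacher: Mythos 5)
The paper cites this lemma directly from Graham and Kolesnik (Lemma 2.4 of \cite{GraKol}) and gives no proof of its own, so there is no internal proof to compare against. Your argument, however, is correct. You split $L = f + g$ with $f$ increasing and $g$ decreasing, dispose of the endpoint cases $f(H_1) \ge g(H_1)$ and $g(H_2) \ge f(H_2)$ by the second and third sums, and otherwise locate a balancing point $H^*$ by the intermediate value theorem; the term-by-term estimate $A_i H^{*a_i} \le n\,\max_j B_j H^{*-b_j}$ followed by solving for $H^*$ and substituting back yields precisely $(A_i^{b_j} B_j^{a_i})^{1/(a_i+b_j)}$, up to the harmless factor $n^{a_i/(a_i+b_{j^*})} \le n$. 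Summing over $i$ and using $L(H^*) = 2f(H^*)$ gives the diagonal double sum with constant $2n$, which indeed depends only on $m$ and $n$ as required. This is essentially the standard argument behind the Graham--Kolesnik lemma; the one genuinely stylistic choice is your use of the intermediate value theorem to pick a single crossover point for $f$ and $g$, rather than explicitly considering the $mn$ pairwise crossover values $(B_j/A_i)^{1/(a_i+b_j)}$, which is the more common phrasing but amounts to the same thing.
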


\section{Proof of Theorem \ref{thm-trivial}}
\label{full-family}
Let $\sF_3$ be the family of cubic residue symbols $\chi_{ab^2}$ where $a,b  \equiv 1 \mod 3\in \Z[\omega]$ are square-free and co-prime, and 
$ab^2 \equiv 1 \mod 9$.
By a slight abuse of notation (dropping the letter $\chi$), we write
\eqs{
	\sF_3 = \Big\{ ab^2\in \ring\setminus\{1\} \;:\;  \begin{array}{l}
a, b \equiv 1 \mod 3 \text{ both square-free},\\
(a,b)=1, \;ab^2 \equiv 1 \mod 9
\end{array} \Big\}. 
}

\begin{lem} \label{lem:sizeofwholefamily} 
Let $\phi$ be a Schwartz class test function whose Fourier transform is supported in $(-v,v)$. Then, for the family $\sF_3$ defined above, 
\bs{&\sA_{\sF_3}  (X) = \frac{2\pi\fw(1)}{9h_{(9)}\sqrt{3}}  \prod_{\fp \nmid 3} \Bigl( 1 - \frac 3 {\N\fp^2} + \frac 2 {\N\fp^3}\Bigr) X\log X + O(X), \\
&\sum_{ab^2 \in \sF_3} w(\N(ab)/X) \log \N(ab)  = \sA_{\sF_3}(X) \log X + O(X \log X),}
and for any $\eps>0$,
\eqs{\ef (X) \ll 
\begin{cases}
X^{1/2+v/2+\eps} & \text{under GRH}\\
X^{1/2+v+\eps} & \text{unconditionally,}
\end{cases}}
where $\ef (X)$ is defined in \eqref{EF(X)}, $\fw (s)= \int _0^\infty w(x) x^{s-1} ds$ is the Mellin transform of $w$ and $h_{(9)} = |J^{(9)}/P^{(9)}|$ is the order of the ray class group modulo $9$. 
\end{lem}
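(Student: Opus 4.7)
The plan is to prove the three assertions via contour integration for the first two, and via cubic reciprocity together with character-sum bounds for the third.

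For $\sA_{\sF_3}(X)$, I use Mellin inversion: writing $w(y) = \tfrac{1}{2\pi i}\int_{(2)} \fw(s)y^{-s}\,ds$ gives
\eqs{\sA_{\sF_3}(X) = \frac{1}{2\pi i}\int_{(2)} \fw(s)X^s Z(s)\,ds, \qquad Z(s) := \su{ab^2\in\sF_3} \N(ab)^{-s}.}
The congruence $ab^2 \equiv 1 \mod 9$ is detected by orthogonality of the $h_{(9)}$ characters $\psi$ of the ray class group modulo $9$, yielding $Z(s) = h_{(9)}^{-1}\sum_\psi Z_\psi(s)$ with
\eqs{Z_\psi(s) = \pr{\fp\nmid 3}\Bigl(1 + \frac{\psi(\fp)+\psi(\fp)^2}{\N\fp^s}\Bigr) = L(s,\psi)L(s,\psi^2)\,A_\psi(s),}
where $A_\psi(s)$ converges absolutely (hence is analytic) for $\re s > 1/2$. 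For $\psi=1$ this reduces to $\zeta_K^{(3)}(s)^2 A(s)$ with $A(s) = \prod_{\fp\nmid 3}(1-3\N\fp^{-2s}+2\N\fp^{-3s})$, giving $Z_1$ a double pole at $s=1$; for $\psi\neq 1$ both $\psi$ and $\psi^2$ are nontrivial (since $h_{(9)}=9$ is odd), so $Z_\psi$ is holomorphic at $s=1$. Shifting the contour to $\re s = 1/2+\eps$ and using standard polynomial bounds on Hecke $L$-functions, the residue of the double pole of $Z_1/h_{(9)}$ produces the claimed $X\log X$ main term with error $O(X)$. The second identity follows by the same argument applied to $y\mapsto w(y)\log y$ (Mellin transform $\fw'(s)$), or equivalently by splitting $\log\N(ab) = \log X + \log(\N(ab)/X)$ and reusing the first asymptotic.

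For the bound on $\ef(X)$, I switch the order of summation. The support of $\phat$ restricts the outer prime sum to $\N\fp \le X^{v/k}$ for $k=1,2$, and the inner character sum, for each primary prime $\pi$ generating $\fp$, is
\eqs{T(\pi) := \su{ab^2\in\sF_3} w(\N(ab)/X)\,\chi_{ab^2}(\pi) = \su{ab^2\in\sF_3} w(\N(ab)/X)\,\chi_\pi(a)\overline{\chi}_\pi(b),}
where the second equality uses cubic reciprocity. Detecting the square-free and coprimality conditions by Möbius inversion and the congruence modulo $9$ by characters $\psi$ of the ray class group, $T(\pi)$ becomes a contour integral of $\fw(s)X^s\cdot L(s,\chi_\pi\psi)L(s,\overline{\chi}_\pi\psi^2)A_{\pi,\psi}(s)$ with $A_{\pi,\psi}$ analytic and polynomially bounded in $\N\pi$ for $\re s > 1/2$. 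Since $\pi\nmid 9$, both twisted characters are nontrivial Hecke characters of conductor divisible by $\pi$. Shifting the contour to $\re s = 1/2+\eps$, under GRH the Lindelöf-type bound on the critical line gives $T(\pi)\ll X^{1/2+\eps}(\N\pi)^\eps$, and unconditionally the convexity bound (equivalently Pólya–Vinogradov) gives $T(\pi)\ll X^{1/2+\eps}(\N\pi)^{1/2+\eps}$. Summing against the weight $\log\N\fp/\N\fp^{k/2}$ over $\N\fp\le X^{v/k}$ (the $k=2$ contribution being strictly smaller due to the stronger weight and smaller support, and the $\chi=1$ contribution being $\ll X^{v/2}$ by the prime number theorem in $K$) yields the claimed bounds on $\ef(X)$.

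The main technical obstacle is the decoupling step for the bilinear sum $T(\pi)$: after Möbius inversion for square-freeness and coprimality and decomposition via the ray-class characters $\psi$, one must verify that the resulting Dirichlet series factors cleanly as a product $L(s,\chi_\pi\psi)L(s,\overline{\chi}_\pi\psi^2)A_{\pi,\psi}(s)$ with $A_{\pi,\psi}$ analytic in $\re s > 1/2$ and bounded polynomially in $\N\pi$ on the critical line, so that the $L$-function estimates apply uniformly in $\pi$. Cubic reciprocity also introduces unit-dependent sign factors depending on the congruence class of $\pi$ modulo $9$, which must be tracked consistently through the decomposition.
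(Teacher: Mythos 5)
Your proposal follows essentially the same route as the paper: Mellin inversion with the Euler-product factorisation $Z_\psi=L(s,\psi)L(s,\psi^2)A_\psi$, a double pole at $s=1$ for $\psi=\psi_0$, the Mellin transform of $w\log$ for the second identity, and cubic reciprocity together with the twisted factorisation $L(s,\chi_\pi\psi)L(s,\overline{\chi}_\pi\psi^2)A_{\pi,\psi}$ plus convexity/GRH bounds for $\ef(X)$. The ``technical obstacle'' you flag at the end is not actually one: the square-free and coprimality constraints are local, so the Euler product factorises directly without any Möbius bookkeeping (the paper sidesteps this by parameterising by $q=ab$ square-free with $b\mid q$), and the Eisenstein form of cubic reciprocity $\cleg{m}{n}=\cleg{n}{m}$ for $m,n\equiv 1\pmod 3$ carries no supplementary unit factor, so there are no signs to track.
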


\begin{proof} 
We first write 
\eqs{\sA_{\sF_3}  (X) + w(1/X) = \frac 1 {h_{(9)}} \sum_{\psi \mod 9} \su{q \equiv 1 \mod 3\\ q \SF} w(\N(q)/X) \psi(q) \su{b \equiv 1 \mod 3\\ b \mid q} \psi(b),}
where $\psi$ runs over the ray class characters modulo 9. By Mellin inversion,
\eqs{\sA_{\sF_3}  (X) + w(1/X) = \frac 1 {h_{(9)}} \sum_{\psi \mod 9} \frac{1}{2\pi i} \int_{\re(s)=2} \fw (s) X^s G_\psi(s)
ds,}
with generating series
\bsc{ \label{G_psi}
G_\psi(s) &=  \su{q \in \ring\\ q \equiv 1 \mod 3\\ q \;\text{square-free}} \frac{\psi(q)}{\N(q)^s} \su{b \in \ring \\ b \equiv 1 \mod 3\\ b \mid q} \psi(b) \\
&= \pr{\pi \equiv 1 \mod 3} \Bigl( 1 + \frac{\psi(\pi)}{\N(\pi)^s} \bigl(1+\psi(\pi)\bigr)\Bigr) = L(s,\psi) L(s,\psi^2) F(s,\psi),}
where
\eqs{L(s,\psi) = \pr{\pi \equiv 1 \mod 3} \Bigl( 1 - \frac{\psi(\pi)}{\N(\pi)^s} \Bigr)^{-1} = \sum_\fa \frac{\psi(\fa)}{\N\fa^s}}
is the Hecke L-function associated with the character $\psi$,
and
\bs{
F_\psi(s) &= \pr{\pi \equiv 1 \mod 3} \Bigl( 1 - \frac{\psi(\pi)}{\N(\pi)^s} \Bigr) \Bigl( 1 - \frac{\psi^2(\pi)}{\N(\pi)^s} \Bigr) \Bigl( 1 + \frac{\psi(\pi)}{\N(\pi)^s} \bigl(1+\psi(\pi)\bigr)\Bigr) \\
&=\pr{\pi \equiv 1 \mod 3} \Bigl( 1 - \frac{\psi^2 (\pi) + \psi^3 (\pi)+\psi^4(\pi)}{\N(\pi)^{2s}} + \frac{\psi^4(\pi)+\psi^5(\pi)}{\N(\pi)^{3s}}\Bigr)
%%&=\pr{\pi \equiv 1 \mod 3} \Bigl( 1 - \frac{\psi^2 (\pi) + \psi^3 (\pi)+\psi^4(\pi)}{\N(\pi)^{2s}} \Bigr)
}
Note that $F_\psi(s)$ converges absolutely for $\re(s) > 1/2$ for all $\psi$ modulo 9,  whereas $L(s,\psi)$ and $L(s,\psi^2)$ both have analytic continuation to entire functions except for a simple pole at $s=1$ when the characters are principal. 

Hence, moving the contour to $\re(s)=1/2+\eps$, we get
\multn{ \frac{1}{2\pi i} \int_{\re(s)=2} \fw (s) X^s G_\psi (s) ds \\ \label{after-cauchy}
= \sum_{\substack{\psi=\psi_0 \\  \psi^2 = \psi_0}} \mathop{\res}_{s=1} \,\bigl( X^s \fw (s) L(s,\psi) L(s,\psi^2) F_\psi(s)\bigr) + O \left( X^{1/2+\eps} \right),}
where, for the error term, we used the fact that $\fw(s) \ll |s|^{-n}$ for all $n \ge 1$, with the classical convexity bound
$ L(s,\psi), L(s,\psi^2) \ll_\psi |s|^{\max(0,1+\eps-\re(s))}$
for $0 \le \re(s) \le 2$, uniformly for any $\eps >0$. 

When $\psi = \psi_0$, there is a pole of order 2 at $s=1$ with residue
\eqs{\left( \frac 4 9 (\mathop{\res}_{s=1} \zeta_K (s))^2 F_{\psi_0}(1) \fw (1) \right) \, X\log X + O(X),}
and when $\psi^2 =\psi_0, \psi \neq \psi_0$, then there is a simple pole and the contribution of the residue is $O(X)$. Substituting in \eqref{after-cauchy}, this proves the first result.

For the second assertion, using Mellin inversion, we have
\eqs{\sum_{ab^2 \in {\sF_3}  \cup \{ 1 \}} w(\N(ab)/X) \log \N(ab)  = -\frac{1}{2\pi i h_{(9)}} \sum_{\psi \mod 9}  \int_{\re(s)=2} \fw (s) X^s G_\psi'(s) ds,}
where $G_\psi(s)$ was defined above in \eqref{G_psi}.  Using integration by parts, for each character $\psi$, the above integral is
\eqs{-\int_{\re(s)=2} \fw' (s) X^s G_\psi(s) ds -  \log X \int_{\re(s)=2} \fw (s) X^s  G_\psi(s) ds.}
Working as above, the main contribution from each integral comes from the double pole of $G_{\psi_0}(s)$ at $s=1$. For the first integral, we bound this contribution by
$O(X\log X)$. Summing the second integral over the characters gives $\sA_{\sF_3} (X) + w(1/X)$. This proves the second assertion.

Finally, we prove the last assertion.
Let
\eqs{S_{\sF_3}  (y) = \sum_{3< \N\fp \le y} \log \N\fp \sum_{ab^2 \in {\sF_3} \cup \{1\}} \chi_{ab^2} (\fp) w(\N(ab)/X).}

Writing each prime ideal $\fp$ as $\fp = ( \pi )$ with $\pi \equiv 1 \mod 3$, we have
\bs{S_{\sF_3}  (y) &= \frac{1}{h_{(9)}} \sum_{\psi \mod 9}  \su{\pi \equiv 1\mod 3\\ \N(\pi)\le y} \log{\N(\pi)} \su{q \in \ring \\q \equiv 1\mod 3\\q \SF} (\chi_\pi \psi)(q) w(\N(q)/X) \su{b \in \ring \\b\equiv 1\mod 3\\b \mid q} (\chi_\pi \psi)(b) \\
	&= \frac{1}{ h_{(9)}} \sum_{\psi \mod 9}  \su{\pi \equiv 1\mod 3\\ \N(\pi)\le y} \log{N(\pi)} \;\frac{1}{2 \pi i}\int_{(2)} X^s \fw(s) G_{(\chi_\pi \psi)} (s) ds.
}
We evaluate the integral working as above.
Since $(\psi \chi_\pi)$ is non-trivial for every character $\psi \mod 9$, the generating series has no pole when $\re(s) >1/2$. 
We move the integral to $\re(s)=1/2 + \eps$, and we use the bound
\begin{align} \label{bound-L-at-onehalf}
L(\textstyle \frac12 + \varepsilon+it, \chi) \ll \begin{cases}  t^\eps \; \N (\cond(\chi))^\eps & \mbox{under GRH} \\
|t|^{1/2+\eps} \; \N(\cond(\chi))^{1/4} &\mbox{unconditionally,}
\end{cases}
\end{align}
which holds for any non-trivial character $\chi$. 
This gives
\eqs{S_{\sF_3}  (y) \ll  \begin{cases} X^{1/2+\eps} y^{1+2\eps} & \text{under GRH}
\\ X^{1/2+\eps} y^{3/2} & \text{unconditionally.}\end{cases}}
By partial integration
\bs{
\ef(X) &= \sum_{1 \le k \le 2} \int_3^\infty \phat \Bigl( \frac{k\log y }{\log X} \Bigr) y^{-k/2} dS_{\sF_3} (y) \\
&= \sum_{1 \le k \le 2} \int_3^{X^{v/k}} S_{\sF_3} (y) y^{-k/2-1} \Bigl( \frac k 2 \phat \Bigl(\frac{k\log y }{\log X} \Bigr) - \phat' \Bigl( \frac{k\log y }{\log X} \Bigr) \frac k {\log X}\Bigr) dy \\
&\ll 
\begin{cases}
X^{1/2+v/2+\eps} & \text{under GRH}\\
X^{1/2+v+\eps} & \text{unconditionally.}
\end{cases}  
}
This establishes the third assertion.
\end{proof}
Using the lemma in \eqref{after-EF} of Section \ref{reduce-to-SF}, the proof of Theorem \ref{thm-non-GRH} for the family $\sF_3$ follows, and assuming GRH, the proof of Theorem \ref{thm-trivial} follows.

\section{Proof of Theorem \ref{thm-main}}
\label{section-thm-main}

Let $\sF_3'$ be the family of primitive cubic Dirichlet characters determined by the cubic residue symbols $\chi_n$, where $n \neq 1$ is square-free and congruent to 1 modulo 9. 
Again, with a slight abuse of notation, we write \eqn{\label{thinfamily}
\sF_3' =\{ n\in \Z[\omega] : n\neq 1, n \equiv 1 \mod 9 \text{ and square-free} \}.}

\begin{lem} \label{lem:Aw(X)asymptotic}
Let $w:\R \to (0,\infty)$ be an even Schwartz function. Then, 
\eqs{\sA_{\sF_3'} =  \zeta_K^{-1}(2) \frac{\pi \fw(1) }{4\sqrt{3}h_{(9)}} X + O(X^{1/2+\eps}),}
\eqs{\sum_{n \in \sF_3'} w(\N(n)/X) \log \N(n) = \sA_{\sF_3'} \log X + O(X),}
and for any $\eps>0$,
\eqs{\eft (X) \ll 
\begin{cases}
X^{1/2+v/2+\eps} & \text{under GRH}\\
X^{1/2+3v/4+\eps} & \text{unconditionally,} 
\end{cases}}
where $\fw (s)= \int _0^\infty w(x) x^{s-1} ds$ is the Mellin transform of $w$,  $h_{(9)} = |J^{(9)}/P^{(9)}|$ is the order of  the ray class group modulo $9$, and $\zeta_K (s)$ is the Dedekind zeta function of $K$.

\end{lem}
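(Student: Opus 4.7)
The strategy is to mirror the Mellin-inversion argument from the proof of Lemma~\ref{lem:sizeofwholefamily}, exploiting the simpler (square-free, single-factor) structure of $\sF_3'$, which produces a simple pole rather than a double pole at $s=1$.

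Detect the congruence $n \equiv 1 \mod 9$ via orthogonality of the characters $\psi$ of the ray class group modulo $9$, obtaining
\eqs{\sA_{\sF_3'}(X) + w(1/X) = \frac{1}{h_{(9)}} \sum_{\psi \mod 9} \frac{1}{2\pi i} \int_{\re(s)=2} \fw(s) X^s G_\psi(s) \, ds,}
with generating series
\eqs{G_\psi(s) = \su{n \equiv 1 \mod 3 \\ n \text{ sqf}} \frac{\psi(n)}{\N(n)^s} = \pr{\pi \equiv 1 \mod 3}\Bigl(1 + \frac{\psi(\pi)}{\N\pi^s}\Bigr) = \frac{L(s,\psi)}{L(2s,\psi^2)},}
where the last identity uses $1+x=(1-x^2)/(1-x)$ and the L-functions are restricted to primes coprime to $3$. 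Shifting each contour to $\re(s)=1/2+\eps$ crosses only the simple pole at $s=1$ coming from $\psi=\psi_0$; combining $\res_{s=1} L(s,\psi_0) = \frac{2\pi}{9\sqrt{3}}$ (which uses $\res_{s=1}\zeta_K(s)=\frac{\pi}{3\sqrt{3}}$ together with the local factor $(1-3^{-s})$ at the ramified prime) with $L(2,\psi_0)=\frac{8}{9}\zeta_K(2)$ yields exactly the stated main constant. The vertical integral on $\re(s)=1/2+\eps$ is controlled by the rapid decay of $\fw$, the convexity bound for $L(s,\psi)$, and the non-vanishing of $L(2s,\psi^2)$ in this region, giving the error $O(X^{1/2+\eps})$.

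For the second assertion I will repeat the Mellin inversion with $G_\psi(s)$ replaced by $-G_\psi'(s)$; differentiation turns the simple pole of $G_{\psi_0}$ at $s=1$ into a double pole, and computing $\res_{s=1}(X^s\fw(s)(-G_{\psi_0}'(s)))$ produces a leading contribution of size $X\log X$ which matches $\sA_{\sF_3'}\log X$ by the first assertion, the remaining terms being absorbed into $O(X)$. For the third assertion, I will bound
\eqs{S_{\sF_3'}(y) = \su{\N\fp\le y \\ \fp\nmid 3} \log\N\fp \sum_{n\in\sF_3'\cup\{1\}} \chi_n(\fp)\, w(\N(n)/X)}
by writing $\fp=(\pi)$ with $\pi\equiv 1\mod 3$, applying cubic reciprocity to replace $\chi_n(\pi)$ by $\chi_\pi(n)$, and treating the inner sum as in the first part via Mellin inversion. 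The resulting generating series $L(s,\chi_\pi\psi)/L(2s,(\chi_\pi\psi)^2)$ is holomorphic on $\re(s)>1/2$ because $\chi_\pi\psi$ is always nontrivial (the prime conductor $\pi\ring$ of $\chi_\pi$ is coprime to the $9\ring$-conductor of $\psi$), so the contour shifts to $\re(s)=1/2+\eps$ with no residue contribution. Applying \eqref{bound-L-at-onehalf} gives $S_{\sF_3'}(y)\ll X^{1/2+\eps} y^{5/4+\eps}$ unconditionally and $X^{1/2+\eps} y^{1+\eps}$ under GRH (the extra $y^{1/4}$ in the unconditional case reflecting the convexity factor $\N\pi^{1/4}$), after which partial summation exactly as at the end of Lemma~\ref{lem:sizeofwholefamily} converts these into the stated bounds on $\eft(X)$.

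The main delicate point I anticipate is keeping the numerical constants exact in the residue computation, in particular correctly tracking the Euler factors at the ramified prime $\pi_3=(1-\omega)$ of norm $3$ which enters both $L(s,\psi_0)$ and $L(2,\psi_0)$. A minor auxiliary check is verifying that $\chi_\pi\psi$ is indeed nontrivial for every $\psi\mod 9$ and every prime $\pi\equiv 1\mod 3$, ensuring no spurious pole at $s=1$ in the third part.
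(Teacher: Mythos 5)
Your argument matches the paper's proof of this lemma in all essentials: you detect the congruence by ray class characters modulo $9$, factor the square-free generating series as $G_\psi(s)=L(s,\psi)/L(2s,\psi^2)$ (the paper writes this equivalently as $\zeta_K^{-1}(2s)\zeta_K(s)(1+3^{-s})^{-1}$ for $\psi=\psi_0$ and $F(s)L(s,\psi)$ for $\psi\neq\psi_0$, with $F(s)=1/L(2s,\psi^2)$), shift to $\re(s)=1/2+\eps$ using rapid decay of $\fw$ plus convexity, and your residue bookkeeping at the ramified prime — $\res_{s=1}L(s,\psi_0)=\frac{2\pi}{9\sqrt 3}$ and $L(2,\psi_0)=\frac89\zeta_K(2)$ — reproduces the constant $\zeta_K^{-1}(2)\frac{\pi}{4\sqrt 3}$. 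Your handling of the second and third assertions (differentiating $G_\psi$, and using cubic reciprocity plus nontriviality of $\chi_\pi\psi$ together with \eqref{bound-L-at-onehalf} and partial summation) is likewise the paper's route, so this is a correct proof along the same lines.
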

\begin{proof} 
As in the proof of Lemma \ref{lem:sizeofwholefamily}, we start by writing 
\begin{align*}
\sA_{\sF_3'} = \frac 1 {2\pi i h_{(9)}} \sum_{\psi \mod 9} \int_{\sigma =2} X^s \fw (s) G_\psi (s) ds - w(1/X), \end{align*}
where
\begin{equation} \label{G-thinfamily} G_\psi (s) = \su{\fa \in J^{(9)}} \frac{\psi(\fa)|\mu{(\fa)|}}{(\N\fa)^s} = \left\{
\def\arraystretch{1.2}
\begin{array}{l@{\quad\text{if }}l}
\zeta^{-1}_K (2s)\zeta_K (s) ( 1+ 3^{-s})^{-1}
& \psi = \psi_0\\
F(s) L(s,\psi)& \psi \neq \psi_0,
\end{array} \right.  
\end{equation}
with $\psi_0$ the principal character modulo 9, and
\eqs{F(s) = \sum_\fa \frac{\mu(\fa)\psi^2(\fa)}{(\N\fa)^{2s}}, \qquad L(s, \psi) = \sum_{\fa} \psi(\fa) (\N\fa)^{-s}.}
Proceeding as before, shifting the contour to $\re(s) = 1/2 +\eps$ and using the convexity bound we conclude that
\eqs{\sA_{\sF_3'} = \frac 1 {h_{(9)}} \mathop{\res}_{s=1} \( X^s \fw (s) G_{\psi_0} (s) \) + O(X^{1/2+\eps}).}
Since $G_{\psi_0}(s)$ has a simple pole at $s=1$, we have
\bs{\mathop{\res}_{s=1} \( X^s \fw (s) L_{\psi_0} (s) \) = X \fw(1) \lim_{s\to 1} (s-1) G_{\psi_0} (s) = X \fw(1) \zeta_K^{-1}(2) \frac{\pi}{4\sqrt{3}},
}
where we used (cf. \cite[Ch VII. Corollary 5.11]{Neukirch}) 
\eqs{\mathop{\res}_{s=1} \zeta_K(s) = \frac{2\pi h_K R}{6|d_K|^{1/2}} = \frac{\pi}{3\sqrt{3}}.}
This gives the first claim. The second identity follows as in the previous section along the same lines using integration by parts. For the third identity, using the bound \eqref{bound-L-at-onehalf} and working as in the proof of Lemma \ref{lem:sizeofwholefamily} with the generating series \eqref{G-thinfamily}, we get
\eqs{S_{\sF_3'} (y) \ll  \begin{cases} X^{1/2+\eps} y^{1+\eps} & \text{under GRH}
\\ X^{1/2+\eps} y^{5/4} & \text{unconditionally.}\end{cases}}
The third identity follows by partial integration.
\end{proof}

Using Lemma \ref{lem:Aw(X)asymptotic} in equation \eqref{after-EF} of Section \ref{reduce-to-SF}, the proof of Theorem \ref{thm-non-GRH} for the family $\sF_3'$ follows. 
We also remark that using the bound for $\eft(X)$ of the lemma, under GRH, gives the one-level density for the family $\sF_3'$ only when the support of the Fourier transform is contained in $(-1,1)$. We now 
turn to the proof of Theorem \ref{thm-main} which increases this support.

Using Lemma \ref{lem:Aw(X)asymptotic} in \eqref{after-EF}, we have that
\eqs{\cD(X; \phi, \sF_3')  = \phat(0)+ O \Bigl( \frac{1}{\log{X}} \Bigr) + O \Bigl( \frac {X^{v/2} + |\eft (X)|}{X \log X} \Bigr), }
and we want to show that $\eft (X) = \underline{o} ( X \log X )$ when $\text{supp}(\phat) \subseteq (-v,v)$. For the family $\sF_3'$ in \eqref{thinfamily}, the sum defined in \eqref{SF(y)} can be written as 
\bs{S_{\sF_3'}  (y) &= \sum_{3< \N\fp \le y} \log \N\fp \sum_{n \in {\sF_3'} \cup \{1\}} \chi_n (\fp) w(\N(n)/X) \\
 &= \sum_{\substack{n \in \ring \\ n \equiv 1 \mod 9}} w(\N(n)/X) \sum_{3< \N\fp \le y} \chi_n (\fp) \log \N\fp \su{d \in \ring\\ d \equiv 1 \mod 3 \\ d^2 \mid n } \mu_K(d) \\
&= \sum_{\substack{ d \in \ring \\ d \equiv 1 \mod 3}} \mu_K(d) \sum_{\substack{ n \in \ring \\ n \equiv d^{-2} \mod 9}} w ( {\N(nd^2)}/{X} ) \sum_{3 < \N\fp \le y} \chi_{nd^2} (\fp) \log \N\fp.
}
where $\mu_K$ is Moebius function over $K$ and we use the detector
\eqs{\su{ d \in \ring \\ d \equiv 1 \mod 3 \\ d^2 \mid n} \mu_K(d) = \begin{cases}
1&\text{if $n$ is square-free}\\
0&\text{otherwise.}
\end{cases}}
We write $S_{\sF_3'} (y) =S_1(y) + S_2(y)$ where
\begin{align} \label{def-S1} S_1 (y) &= \sum_{3 < \N\fp \le y} \log \N\fp \su{d \in \ring \\d \equiv 1 \mod 3\\ \N(d) \le D} \mu_K(d) \chi_{d^2} (\fp)  \sum_{\substack{ n \in \ring \\n \equiv d^{-2} \mod 9}} w  \Bigl(\frac{\N(nd^2)}{X}\Bigr) \chi_n (\fp) \\ \label{def-S2}
S_2 (y) &= \su{d \in \ring \\d \equiv 1 \mod 3\\ \N(d) > D} \mu_K(d)\su{n \in \ring \\ n \equiv d^{-2} \mod 9} w(\N(nd^2)/X) \sum_{3 < \N\fp \le y} \chi_{nd^2} (\fp) \log \N\fp.
\end{align}

\subsection{Estimate of $S_2(y)$}

\begin{lem} \label{lem:CharSumwithGRH}
Given $n \equiv 1 \mod 3$ in $\ring$ with $\N(n) \equiv 1\mod  9$, write $n= n_1n_2^2n_3^3$ with $n_1, n_2$ square-free and co-prime, and $n_i \equiv 1\mod 3$. Assuming GRH for $L(s,\chi_n)$, and that $n$ is not a cube, the estimate  
\eqs{\sum_{\N\fp \le x} \chi_n (\fp) \log \N\fp
\ll x^{1/2} \log^3 \(x\N(n)\)}
holds for $x > 1$. \end{lem}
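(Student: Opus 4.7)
The plan is to reduce the claim to the classical GRH-conditional bound for the Chebyshev sum of a nontrivial primitive Hecke character of $K$. First I would write $n = n_1 n_2^2 n_3^3$ as in the statement and use the hypothesis that $n$ is not a cube to ensure $n_1 n_2 \ne 1$; thus $\chi_n$ is induced by the nontrivial primitive Hecke character $\chi^* := \chi_{n_1 n_2^2}$ of conductor $\ff := n_1 n_2 \ring$, with $\N\ff \le \N(n)$. I would then observe that $L(s,\chi_n)$ and $L(s,\chi^*)$ differ only in the Euler factors at primes dividing $n$ (which have no zeros in $\re(s) > 0$), so GRH for $L(s,\chi_n)$ is equivalent to GRH for $L(s,\chi^*)$, and the completed L-function of $\chi^*$ is entire.

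Second, I would peel off the primes dividing $n$. Since $\chi_n(\fp) = \chi^*(\fp)$ for $\fp \nmid n$, and there are $O(\log \N(n))$ prime ideal divisors of $n$ each contributing at most $\log \N(n)$, the prime sum for $\chi_n$ agrees with that for $\chi^*$ up to an error of $O((\log \N(n))^2)$. Adding back the trivial prime-power contribution $O(x^{1/2}\log x)$, the lemma reduces to the bound
$$\Psi(x,\chi^*) := \sum_{\N\fa \le x} \chi^*(\fa)\,\Lambda_K(\fa) \ll x^{1/2}\log^3\bigl(x\N(n)\bigr)$$
under GRH for $L(s,\chi^*)$, where $\Lambda_K$ denotes the von Mangoldt function of $K$.

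Finally, I would prove this by the standard Perron–explicit-formula argument. With $c = 1 + 1/\log x$ and $T \in [x,2x]$ chosen via Lemma \ref{lem:T1}, Perron's formula gives $\Psi(x,\chi^*) = -\frac{1}{2\pi i}\int_{c-iT}^{c+iT}(L'/L)(s,\chi^*)\,x^s s^{-1}\,ds + O(x^{\eps})$. Shifting the contour to the line $\re(s) = 1/2 + 1/\log x$ crosses no poles because $\chi^*$ is nontrivial, and on this new line \eqref{eq:L'/Lbound1} combined with the GRH facts that $|s-\rho| \ge 1/\log x$ for $\rho$ on the critical line, and that there are $O(\log(\N\ff(|t|+2)))$ zeros with $|\gamma-t|\le 1$, yields $(L'/L)(s,\chi^*) \ll (\log x)\log(\N\ff(|t|+2))$; integrating this bound against $x^s/s$ produces the claimed estimate, while the horizontal segments are controlled by \eqref{eq:L'/Lbound2} and the choice of $T_1$. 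The only content-bearing step is the initial reduction to the inducing primitive character $\chi^*$, which is precisely where the hypothesis that $n$ is not a cube is used (otherwise $\chi^*$ would be trivial and $L(s,\chi^*)$ would have a pole at $s=1$); the remaining steps are textbook, so no serious obstacle is expected.
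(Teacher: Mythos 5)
Your proof is correct and follows essentially the same Perron-plus-contour-shift strategy as the paper. The only minor difference in bookkeeping is that you replace $\chi_n$ by the inducing primitive character $\chi^* = \chi_{n_1 n_2^2}$ at the level of the prime sum, bounding trivially the finitely many primes dividing $n_3$ but not $n_1 n_2$ (in fact the discrepancy is $\ll \log\N(n)$, even better than your stated $O((\log\N(n))^2)$, though either is ample), whereas the paper keeps $\chi_n$ in the Perron integral and instead factors $L(s,\chi_n) = \prod_{\fp\mid d}(1-\chi^*(\fp)\N\fp^{-s})\,L(s,\chi^*)$, carrying the extra Euler factors through the shifted contour and absorbing them into a $\log\N(d)$ term. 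Both routes use the same Lemma \ref{lem:T1} for the choice $T \asymp x$, the same zero-density input for $L'/L$ near the critical line, and yield the same final bound.
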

\begin{proof} We shall give only a sketch of the proof as it uses standard techniques.

The estimate trivially holds for $x < 3$ since the sum equals zero and the right side is positive. Thus we assume $x\ge 3$, and write 
\eqs{L(s,\chi_n) = \prod_{\fp \nmid n} \bigl(1 - \chi_n (\fp) \N\fp ^{-s} \bigr)^{-1} = \prod_{\fp \mid d} \bigl(1 - \chi (\fp) \N\fp ^{-s} \bigr) L(s,\chi),
}
where $\chi = \chi_{n_1 n_2^2}$ is the primitive character to modulus $n_1n_2\ring$ that induces $\chi_n$, and where $d$ denotes the product of primes dividing $n_3$, but not $n_1n_2$.  For $T>1$, we find $T_1\in [T,T+1]$ given by Lemma \ref{lem:T1} for $L(s,\chi)$. Using Perron's formula with $a=1+ (2\log x)^{-1}$,
\bs{
\sum_{\N\fp \le x} \chi_n(\fp) \log \N\fp  &= \frac{1}{2\pi i} \int_{a - i T_1}^{a + i T_1} -\frac{L'}{L} (s, \chi_n)
\frac{x^s}{s} ds \\
&\qquad + O \biggl( x^{1/2} + \frac{x \log x} T + \log x + \frac{x\log^2 x}{T} \biggr).
}
Next we shift the contour to $b= 1/2 + (2\log x)^{-1}$ and use \eqref{eq:L'/Lbound1} and \eqref{eq:L'/Lbound2} to estimate the horizontal and vertical integrals to get
\mult{
\sum_{\N\fp \le x} \chi_n(\fp) \log \N\fp \ll \frac{x\log^2 (\N(n_1n_2)(3+T))}{T\log x}  + \Bigl(x^{1/2} \log T + \frac{x}{T\log x} \Bigr) \log \N(d)\\
+ \frac{x\log^2 x}{T} + x^{1/2} \(\log x \log (3\N(n_1n_2)) + \log T \log^2 (\N(n_1n_2)(3+T))\) .}
Choosing $T = x$ gives the claimed estimate. \end{proof}
\begin{lem}
Let $1 < D \le \sqrt X$. Then,
\eqn{\label{eq:S2bound}
S_2(y) \ll \begin{cases}  \cfrac{X y^{1/2}}{D} \log^3{(yX)}   + y \min \Big\{ \cfrac{X}{D^2}, X^{1/3} \log X \Big\} 
& \text{under GRH} \\    \\
\displaystyle\frac{y X}{D}  & \text{unconditionally} \end{cases} }
\end{lem}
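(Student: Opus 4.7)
The plan is to split $S_2(y)$ according to whether $nd^2$ is a cube, equivalently whether $\chi_{nd^2}$ is the trivial character on $(\ring/nd^2\ring)^\times$. Write $S_2(y) = S_2^{\mathrm{nc}}(y) + S_2^{\mathrm{c}}(y)$ for the non-cube and cube contributions. Throughout, the support of $w$ forces $\N(nd^2) \asymp X$, and hence $\N(d) \le \sqrt{X}$.

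For $S_2^{\mathrm{nc}}(y)$ under GRH, apply Lemma \ref{lem:CharSumwithGRH} to the inner prime sum: since $nd^2$ is not a cube,
\[
	\sum_{\N\fp \le y} \chi_{nd^2}(\fp)\log\N\fp \ll y^{1/2}\log^3(yX),
\]
using $\N(nd^2)\ll X$ on the support of $w$. The number of $n$ with $\N(n)\asymp X/\N(d)^2$ is $\ll X/\N(d)^2$, and $\sum_{\N(d)>D}\N(d)^{-2}\ll 1/D$ (dyadic summation over annuli of $\ring$). Combining, $S_2^{\mathrm{nc}}(y)\ll X y^{1/2}D^{-1}\log^3(yX)$, the first term of the GRH bound.

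For $S_2^{\mathrm{c}}(y)$, bound the inner sum trivially via Chebyshev's estimate $\sum_{\N\fp\le y}\log\N\fp\ll y$. Since $d$ is square-free, the condition that $nd^2$ be a cube is equivalent to $n=dm^3$ for some $m\in\ring$; then $\N(nd^2)=\N(dm)^3\asymp X$ forces $\N(dm)\asymp X^{1/3}$. Thus $S_2^{\mathrm{c}}(y)$ is bounded by $y$ times the number of pairs $(d,m)$ with $\N(d)>D$ and $\N(dm)\asymp X^{1/3}$. Fixing $d$ and counting $m$'s gives $\sum_{D<\N(d)\le X^{1/3}} X^{1/3}/\N(d)\ll X^{1/3}\log X$; alternatively, a direct product count majorising $(d,m)$ by $(d,n)$ pairs with $\N(n)\asymp X/\N(d)^2$ yields $\ll X/D^2$ (cf. the unconditional computation below). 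Taking the minimum of the two produces the term $y\min(X/D^2,X^{1/3}\log X)$.

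For the unconditional bound, we dispense with the cube/non-cube dichotomy: using $|\chi_{nd^2}(\fp)|\le 1$ and $\sum_{\N\fp\le y}\log\N\fp\ll y$, the total is bounded by $y\sum_{\N(d)>D}X/\N(d)^2\ll yX/D$. The main obstacle is the cube contribution, where one must carefully exploit the square-free condition on $d$ to reduce to the clean parametrisation $n=dm^3$, and verify both estimates in the minimum; the non-cube part requires only the routine application of Lemma \ref{lem:CharSumwithGRH} followed by summation over the dyadic ranges of $\N(d)$.
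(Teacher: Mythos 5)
Your proposal follows essentially the same approach as the paper: trivial bound on the prime sum unconditionally, and under GRH a split into cube and non-cube cases with Lemma~\ref{lem:CharSumwithGRH} applied to the latter and the parametrisation $n=dm^3$ (valid since $d$ is square-free) for the former. Two small cautions are worth raising. First, you write that ``the support of $w$ forces $\N(nd^2)\asymp X$, and hence $\N(d)\le\sqrt{X}$''; this is literally false, since $w:\R\to(0,\infty)$ is a positive Schwartz function and in particular nowhere vanishing. The paper's argument does not rely on compact support: it separately treats $\N(d)>\sqrt{X}$, where the rapid decay of $w$ gives $\sum_{\N(d)>\sqrt{X}}\sum_n w(\N(nd^2)/X)\ll X^{1/2}\le X/D$, and for $D<\N(d)\le\sqrt{X}$ splits the $n$-sum at $\N(n)=X/\N(d)^2$. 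Your conclusion is the same but the intermediate claim should be phrased as rapid decay rather than support. Second, your one-line derivation of the $X/D^2$ term (``direct product count majorising $(d,m)$ by $(d,n)$ pairs with $\N(n)\asymp X/\N(d)^2$'') actually gives $\sum_{\N(d)>D}X/\N(d)^2\ll X/D$, not $X/D^2$; the paper's own justification of this term is likewise terse. This does not affect the downstream application, since $D$ is eventually taken to be $X^\eps$ and then $\min(X/D^2,X^{1/3}\log X)=X^{1/3}\log X$, which is the term you and the paper both use; but as written the $X/D^2$ bound in your argument is not established. Everything else — the application of Lemma~\ref{lem:CharSumwithGRH} with conductor $\ll X$, the $\sum_{\N(d)>D}\N(d)^{-2}\ll 1/D$ dyadic count, the Chebyshev bound on the cube piece, and the unconditional estimate — matches the paper's proof.
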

\begin{proof} 
Trivially estimating the innermost sum over primes in \eqref{def-S2} gives
\eqs{S_2 (y) \ll y \su{d \in \ring \\ d \equiv 1 \mod 3\\ \N(d) > D} \mu_K(d)\su{n \in \ring \\ n \equiv d^{-2} \mod 9} w(\N(nd^2)/X).}
Since $1 \le D \le \sqrt X$, we have 
\bs{\su{d \in \ring \\ D < \N(d) \le \sqrt X} \biggl(\su{n \in \ring \\ \N(n) \le X/\N(d^2)} + \su{n \in \ring \\ \N(n) > X/\N(d^2)}\biggr) w(\N(nd^2)/X) \ll  X/D, } 
and
\eqs{\su{d \in \ring \\ d \equiv 1 \mod 3\\ \N(d) > \sqrt X} \sum_{n \in \ring} w(\N(nd^2)/X) \ll X^{1/2} \le X/D.}

If we assume GRH, we can use Lemma \ref{lem:CharSumwithGRH} whenever $nd^2$ is not a cube. The contribution of these terms to $S_2(y)$ is
\eqs{\ll y^{1/2} \su{d \in \ring \\ \N(d) > D} \sum_{n \in \ring} w(\N(nd^2)/X) \log^3 (y\N(nd^2)) \ll \frac {Xy^{1/2}}D \log^3 (yX).}
Note that $nd^2$ is a cube if and only if $n =da^3$ for some $a \in\ring$, since $d$ is square-free. Thus, the contribution of cubes to $S_2(t)$ is 
\eqs{\ll y\su{d \in \ring \\ d \equiv 1 \mod 3\\ \N(d) > D} \sum_{a \in \ring} w(\N(a^3d^3)/X) \ll y\min\{ X/D^2, X^{1/3} \log X\}.}
Combining all the estimates, the result follows. 
\end{proof}

\subsection{Estimate of $S_1(y)$}
Writing each prime ideal $\fp$ co-prime to 3 as  $\fp=\pi\ring$ with $\pi \equiv 1 \mod 3$, and using cubic reciprocity, we have
\eqs{\chi_{nd^2} (\fp) = \chi_{nd^2} (\pi) = \chi_\pi (nd^2) = \chi_\pi (n) \chi_\pi (d^2),}
where the first equality follows since $nd^2 \equiv 1 \mod 9$. Replacing in \eqref{def-S1}, and using the ray class characters to detect the congruence condition, we have
\bs{S_1 (y) 
&= \frac 1 {h_{(9)}} \sum_{\psi \mod 9} \su{\pi \equiv 1 \mod 3\\ \N(\pi) \le y} \log \N(\pi) \su{d \equiv 1 \mod 3\\ \N(d) \le D} \mu_K(d) \chi_{\pi} (d^2) \psi (d^2) \\
&\quad \times \sum_{n\in \ring} w  \Bigl(\frac{\N(nd^2)}{X}\Bigr) \chi_{\pi} (n) \psi (n),
}
where $\psi$ runs over the ray class characters modulo $9$, and $h_{(9)}$ is the order of the ray class group modulo 9.
When $\psi$ is the principal character modulo 9, the innermost sum over $n$ is
\eqs{\sum_{n\in \ring} w  \Bigl(\frac{\N(nd^2)}{X}\Bigr) \chi_{\pi} (n) - \sum_{n\in \ring} w  \Bigl(\frac{\N((1-\omega)nd^2)}{X}\Bigr) \chi_{\pi} ((1-\omega)n).}
Hence, we can write $S_1(y) = S_{11}(y) + S_{12}(y)-S_{13}(y)$, where 
\mult{S_{11} (y) = \frac 1 {h_{(9)}} \su{\psi \mod 9\\ \psi \neq \psi_0} \su{\pi \equiv 1 \mod 3\\ \N(\pi) \le y} \log \N(\pi) \\
\times \su{d \equiv 1 \mod 3\\ \N(d) \le D} \mu_K(d) \chi_{\pi} (d^2) \psi (d^2) \sum_{n\in \ring} w  \Bigl(\frac{\N(nd^2)}{X}\Bigr) (\psi \chi_\pi) (n),
}
\eqs{S_{12} (y) = \frac 1 {h_{(9)}} \su{\pi \equiv 1 \mod 3\\ \N(\pi) \le y} \log \N(\pi) \su{d \equiv 1 \mod 3\\ \N(d) \le D} \mu_K(d) \chi_{\pi} (d^2) \sum_{n\in \ring} w  \Bigl(\frac{\N(nd^2)}{X}\Bigr) \chi_\pi (n),
}
and
\eqs{S_{13} (y) = \frac 1 {h_{(9)}} \su{\pi \equiv 1 \mod 3\\ \N(\pi) \le y} \log \N(\pi) \su{d \equiv 1 \mod 3\\ \N(d) \le D} \mu_K(d) \chi_{\pi} ((1-\omega)d^2) \sum_{n\in \ring} w  \Bigl(\frac{3\N(nd^2)}{X}\Bigr) \chi_\pi (n).
}
Note that since $9$ is a prime power, each non-principal character $\psi$ takes on the same values as the primitive character that induces $\psi$. Hence, we can treat each $\psi$ as primitive, and we denote its conductor by $\ff_\psi$. Then, $\psi \chi_\pi$ is a primitive character of modulus $\ff\ring$, where $\ff=\ff_\chi \pi$, since $(\pi,9)=1$.

We now apply Poisson summation (Lemma \ref{lem:Poisson}) to the innermost sums over $n \in \ring$. For $S_{11} (y)$, this gives
\multn{\label{after-Poisson}
\sum_{n\in \ring} w  (\N(nd^2)/X ) (\psi \chi_\pi) (n) = 
\\ \frac {X\psi(-1) \chi_{\pi}(\ff_\psi \sqrt{-3}) \psi (\pi) W({\psi}) g(1, \pi)}{\N(d^2) \N(\ff_\psi \pi)} \sum_{n \in \ring} \overline{\psi} (n) \overline{\chi}_\pi (n)
\tilde{w} \biggl( \frac{X\N(n)}{ \N( d^2) \N(\ff_\psi \pi)} \biggr),
}
where $\tilde{w}(t) =\widehat{w}(\sqrt{t}),$ and we used the identities
\eqs{W(\overline{\chi_\pi \psi}) = \chi_\pi(-1) \psi(-1) \overline{W(\chi_\pi \psi)} = 
\psi(-1) \overline{W(\chi_\pi \psi)} , \;\; |W({\chi_\pi \psi})|^2 = N(\pi) N(\ff_\psi),}
and (from the Chinese Remainder Theorem),
\eqs{W(\chi_\pi \psi) = \chi_\pi (\ff_\psi) \psi (\pi) W(\chi_\pi) W(\psi) = \chi_\pi (\ff_\psi) \psi (\pi) W(\psi) \chi_\pi(\sqrt{-3}) g(1, \pi)}
with $g(1, \pi)$ defined by \eqref{ShiftedGaussSum}.

We now insert \eqref{after-Poisson} into $S_{11}(y)$, then write each $\tilde n \in \ring$ as $u n (1-\omega)^k$, where $n\equiv 1\mod 3$, $u$ is a unit and $k\ge 0$, and use
\eqs{\chi_\pi(d^2)  \overline{\chi_\pi} (n)  g(1, \pi) = \overline{\chi_\pi} (nd) g(1,\pi) = \begin{cases} g(nd , \pi) & \text{when $(nd, \pi)=1$} \\ 0 & \text{otherwise} \end{cases}}
to get 
\begin{align} \label{this-is-S11}
S_{11}(y) &= \frac X {h_{(9)}} \su{\psi \mod 9\\ \psi \neq 1} \frac{W(\psi)\psi(-1)}{\N(\ff_\psi)} \su{d \equiv 1 \mod 3\\ \N(d) \le D} \frac{\mu_K(d) \psi (d^2) }{\N(d^2)} 
\sum_{u \in \ring^\times} \su{n \in \ring\\ n \equiv 1 \mod 3} \overline{\psi} (un)  \nonumber \\ 
&\quad \times \su{\pi \equiv 1 \mod 3\\ (\pi, nd)=1 \\\N(\pi) \le y} 
\frac {g(nd,\pi)}{\N(\pi)}
\tilde{w} \biggl( \frac{X\N(n)}{ \N( d^2\ff_\psi \pi)} \biggr) 
\chi_{\pi}(u^2 \sqrt{-3}\ff_\psi) \psi (\pi) \log \N(\pi) .
\end{align}

We get similar and simpler (since there is no character $\psi$) formulae for $S_{12}(y)$ and $S_{13}(y)$, namely
\mult{S_{12} (y) = \frac X {h_{(9)}} \su{d \equiv 1 \mod 3\\ \N(d) \le D} \frac{\mu_K(d)}{\N(d^2)} \sum_{\substack{u\in \ring^\times\\k \ge 0}}  \sum_{\substack{n \in \ring\\ n \equiv 1 \mod 3}} \\ \times \su{\pi \equiv 1 \mod 3\\ (\pi, nd)=1 \\\N(\pi) \le y}   \frac {g(nd,\pi)} {\N(\pi)}  \chi_\pi (u^2 (1-\omega)^{2k} \sqrt{-3})  \tilde{w} \biggl( \frac{X3^k\N(n)}{ \N( d^2\pi)} \biggr)  \log \N(\pi), }
and
\mult{S_{13}(y) = \frac X {h_{(9)}} \su{d \equiv 1 \mod 3\\ \N(d) \le D} \frac{\mu_K(d)}{\N(d^2)} 
\sum_{\substack{u\in \ring^\times\\k \ge 0}} 
 \sum_{\substack{n \in \ring\\ n \equiv 1 \mod 3}} \\ \times \su{\pi \equiv 1 \mod 3\\ (\pi, nd)=1\\\N(\pi) \le y}
\frac {g(nd,\pi)} {\N(\pi)}  \chi_{\pi} (u^2 (1-\omega)^{2k+1} \sqrt{-3})
 \tilde{w} \biggl( \frac{X3^{k-1}\N(n)}{\N( d^2\pi)} \biggr)  \log \N(\pi).}

We first estimate the sum over primes. To this end, we define for any character $\lambda$ on $\ring$ and $r \equiv 1 \mod 3$, 
\bsc{\label{H(Z,r,Lambda)}
H(Z,r,\lambda) &= \su{\pi \equiv 1 \mod 3\\ (\pi,r)=1\\ \N(\pi) \le Z}
\tilde g_\lambda (r,\pi) \log \N(\pi)  = \su{c \equiv 1 \mod 3\\ (c,r)=1\\ \N(c) \le Z} \tilde g_\lambda (r,c) \Lambda(c)\\
\tilde g_\lambda (r,c) &= g(r,c) \lambda(c) \N(c)^{-1/2},  }
where the second equality in the first line follows from Lemma \ref{cubic-GS-lemma2}.
\begin{thm} \label{average-GS} 
Let $\lambda$ be a character on $\ring$ and $r  \in \ring$ with $r\equiv 1 \mod 3$. Then for any $\eps > 0$,
\mult{
H(Z,r,\lambda) \ll Z^{\eps} \Bigl( Z^{2/3} \N(r)^{1/6+\eps} + Z^{1/2} \N(r)^{1/4+\eps} \\
+ Z^{5/6} \N(r)^{1/12 +\eps} + Z^{4/5} \N(r)^{1/10+\eps}\Bigr),}
where the implied constant depends on the modulus of $\lambda$ and $\eps$.
\end{thm}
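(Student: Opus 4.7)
The plan is to adapt the Heath-Brown--Patterson analysis of cubic Gauss sums at prime arguments \cite{HB2000, HBP1979} to the twisted setting of a general shift $r$ and auxiliary character $\lambda$. First, replace the sharp cutoff $\N(c) \le Z$ by a smooth dyadic partition $F(\N(c)/W)$ with $W$ ranging dyadically up to $Z$. Vaughan's identity at parameters $U, V$ (to be optimized in $[1,Z^{1/2}]$) then decomposes $\Lambda(c)$ so that $H(Z,r,\lambda)$ splits into \emph{Type~I sums}
\begin{equation*}
S_I(A;W) = \sum_{\N(a) \le A} \alpha_a \sum_{\substack{b \equiv 1\,(3) \\ (b,ra)=1}} g(r,ab)\lambda(ab)\N(ab)^{-1/2} F(\N(ab)/W),
\end{equation*}
with $\alpha_a$ supported on $\N(a) \le UV$, and \emph{Type~II sums} $S_{II}(A,B;W)$ with both $a$ and $b$ in dyadic ranges $A,B \ge \min(U,V)$.

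\textbf{Type~I estimate.} By the multiplicativity in Lemma~\ref{cubic-GS-lemma1}, $g(r,ab) = \overline{\chi_a}(b)\,g(r,a)\,g(ra,b)$ whenever $(a,b)=1$; terms with $(a,b)>1$ are absorbed into the error via the trivial bound $|g(r,c)| \le \N(c)^{1/2}$. The inner $b$-sum then reduces to a smoothed twisted cubic Gauss sum sum
\begin{equation*}
T(Y;r',\mu) = \sum_{\substack{b \equiv 1\,(3) \\ (b,r')=1}} g(r',b)\mu(b)\N(b)^{-1/2} F(\N(b)/Y),
\end{equation*}
with $r'=ra$ and $\mu = \overline{\chi_a}\lambda$. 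The key analytic input, obtained by extending the Patterson analysis of the cubic Kubota Dirichlet series $\sum_b g(r',b)\mu(b)\N(b)^{-s-1/2}$ to arbitrary $(r',\mu)$, is a pointwise bound of the form $T(Y;r',\mu) \ll Y^{1/3}\N(r')^{1/6+\eps} + \N(r')^{1/2+\eps}$. Applying Cauchy--Schwarz to the outer $a$-sum (using $\sum_{\N(a) \le A} |\alpha_a|^2 \ll A(\log A)^{C}$) then yields the contributions $Z^{2/3}\N(r)^{1/6+\eps}$ and $Z^{1/2}\N(r)^{1/4+\eps}$ to the theorem.

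\textbf{Type~II estimate and optimization.} The same factorization plus Cauchy--Schwarz in $a$ (and cubic reciprocity to swap $\chi_a(b)$ into $\chi_b(a)$) reduces $S_{II}(A,B;W)$ to a bilinear character sum of the shape $\sum_a |\sum_b \beta_b \chi_a(b)|^2$. Heath-Brown's cubic large sieve \cite[Thm.~2]{HB2000} bounds this by $(AB)^{\eps}(A+B+(AB)^{2/3})\sum_b |\beta_b|^2$, giving
\begin{equation*}
S_{II}(A,B;W) \ll (WZ\N(r))^{\eps}\bigl(A^{1/2}+B^{1/2}+(AB)^{1/3}\bigr) W^{1/2}.
\end{equation*}
Summing over the dyadic weights $W \le Z$ and applying Lemma~\ref{lemma-kolesnik} to balance $U$ and $V$ against the Type~I constraint $UV \le Z^{1/2}$ produces the two residual terms $Z^{5/6}\N(r)^{1/12+\eps}$ and $Z^{4/5}\N(r)^{1/10+\eps}$ of the claimed bound.

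\textbf{Main obstacle.} The crux lies squarely in securing the Patterson-type bound for $T(Y;r',\mu)$ \emph{uniformly in} the shift $r'$ and the (possibly ramified) twist $\mu$. This requires a careful treatment of the metaplectic functional equation of the cubic theta function, a uniform analysis of the local factors at primes dividing $r'$ and the conductor of $\mu$, and an explicit extraction of the exponent $1/6$ on $\N(r)$ from the residue of the Kubota Dirichlet series at its pole---precisely the point where the unramified, untwisted analysis of Heath-Brown and Patterson must be generalised. Once this uniform analytic input is in hand, the remaining ingredients---Vaughan's decomposition, the cubic large sieve, and the $(U,V)$-optimization via Lemma~\ref{lemma-kolesnik}---proceed along standard lines.
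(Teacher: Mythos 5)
Your overall skeleton is the right one, and it matches the paper: decompose $H$ via Vaughan's identity, treat the Type~I pieces through a shifted Kubota--Patterson Dirichlet series, treat the Type~II pieces through Heath-Brown's cubic large sieve, and balance with Lemma~\ref{lemma-kolesnik}. You also correctly identify that the whole argument hinges on a uniform-in-$r'$ and in-$\mu$ bound for the smoothed twisted Gauss-sum sum. The gap is in that very bound, which you only state but do not prove, and which as stated is wrong.

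Concretely, you assert $T(Y;r',\mu) \ll Y^{1/3}\N(r')^{1/6+\eps} + \N(r')^{1/2+\eps}$. The generating function $\psi(r',\mu,s) = \sum_b g(r',b)\mu(b)\N(b)^{-s}$ has a simple pole at $s=4/3$ (Lemma~\ref{lem:Patterson}), whose residue $\rho(r',\mu)$ vanishes unless $r'$ is square-free times a cube, and when non-zero satisfies $\rho(r',\mu) \ll \N(r'_1)^{-1/6}$ where $r'_1$ is the square-free part -- a \emph{negative} power. Via Perron, the pole therefore contributes $\asymp Y^{4/3-1/2}\N(r'_1)^{-1/6} = Y^{5/6}\N(r'_1)^{-1/6}$ to $T(Y;r',\mu)$, not $Y^{1/3}\N(r')^{1/6}$; the sign of the shift-dependence and the exponent of $Y$ are both off. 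The secondary terms are also not of the shape $\N(r')^{1/2+\eps}$: the correct ones, obtained by combining the convexity bound of Lemma~\ref{lem:Patterson} with the mean-value estimate $\int_{-T}^T |\psi(r,\lambda,1+\eps+it)|^2\,dt \ll T^2\N(r)^{1/2}$ of Lemma~\ref{lem:from-HB}, give contributions of size $Y^{2/3+\eps}\N(r')^{1/6+\eps}$ and $Y^{1/2+\eps}\N(r')^{1/4+\eps}$ (cf.\ Proposition~\ref{Bound4F}). This distinction matters: the four terms in the theorem arise as boundary terms and balancing terms in the optimization, with the term $Z^{5/6}\N(r)^{1/12+\eps}$ coming from balancing the Type~II bound $Z\,u^{-1/2}$ against $u^{1/2}Z^{2/3}\N(r)^{1/6}$, so an incorrect Type~I input would not reproduce the stated exponents.

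Two further points. First, the paper does not apply Cauchy--Schwarz to the $a$-sum; instead it uses the meromorphic structure of $h_a(r,\lambda,s)$ (Lemma~\ref{lem:ha(r,lambda,s)}) to make the exponents of $\N(a)$ explicit in the bound for $F_a$ (Proposition~\ref{Bound4F}) and then sums directly over $\N(a)\le u$. Your Cauchy--Schwarz step loses $\N(a)$-information that the paper uses, so it is not obvious you recover the same final exponents; this needs to be checked. Second, the single-parameter Vaughan identity the paper uses imposes $u\le Z^{1/3}$, whereas you impose $UV\le Z^{1/2}$; with that constraint the $\Sigma_4$ piece of Vaughan need not vanish, and you would need to treat it separately, which you do not address.
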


We will prove this result in Section \ref{section-Patterson} following the techniques of \cite{HB2000}. Theorem \ref{average-GS} can be compared with Theorem 1 of \cite{HB2000}  which corresponds to the case $r=1$ with a bound of $Z^{5/6+\varepsilon}$.

We first show how Theorem \ref{average-GS} implies Theorem \ref{thm-main}. Write the estimate in Theorem \ref{average-GS} as $H(Z,r,\lambda) \ll \sum_{j=1}^4 Z^{\vartheta_j} \N(r)^{\theta_j}$ with each $0 < \theta_j < 1/2 < \vartheta_j < 1$.  
We only bound $S_{11}(y)$ since the estimates for $S_{12}(y)$ and $S_{13}(y)$ follow similarly. By partial summation, we have for 
$$\lambda(\pi) = \chi_{\pi}(u^2 \sqrt{-3}\ff_\psi) \psi (\pi),$$ which is a character on $\ring$ of bounded modulus,
\bs{
&\su{\pi \equiv 1 \mod 3\\ (\pi,nd)=1 \\ \N(\pi) \le y} \frac {g(nd,\pi)} {\N(\pi)}  \lambda (\pi) \log \N(\pi)  \tilde{w} \left( \frac {X N(n)}{N(d^2 \ff_\psi) N(\pi)} \right)  \\
&\quad =\int_3^y \tilde{w} \left(\frac{X N(n)}{N(d^2 \ff_\psi) Z} \right) Z^{-1/2}  dH(Z,nd,\lambda)\\
&\quad =  \displaystyle \frac{\tilde w \left( \frac{X N(n)}{N(d^2 \ff_\psi) y} \right)} {y^{1/2}}  H(y,nd,\lambda) + \int_3^y H(Z,nd,\lambda) \biggl( \frac{\tilde{w}' \left(\frac{X N(n)}{N(d^2 \ff_\psi) Z} \right) }{Z^{5/2}} + \frac {\tilde{w} \left(\frac{X N(n)}{N(d^2 \ff_\psi) Z} \right)  }{2Z^{3/2}}\biggr) dZ
\\
&\quad \ll \sum_{j=1}^4 N(nd)^{\theta_j} \min \left( y^{\vartheta_j-1/2}, \left( \frac{X N(n)}{N(d^2 \ff_\psi)} \right)^{-2} y^{\vartheta_j+3/2} \right),
}
using Theorem \ref{average-GS}. We also used the fact that
 $\tilde{w}$ is bounded and $\tilde{w}'(x) \ll x^{-1}$ for the first bound, and the 
 fact that $\tilde{w} (x) \ll x^{-2}$ and
 $\tilde{w}' (x) \ll x^{-3}$ for the second bound.
 
Inserting this estimate in \eqref{this-is-S11} yields 
\bs{S_{11}(y) &\ll X \sum_{j =1}^4   \su{d \equiv 1 \mod 3\\ \N(d) \le D} \N(d)^{\theta_j-2}    \Biggl(\su{n \in \ring\\ \N(n) \le {y N(d^2 \ff_\psi)}/{X}} \N(n)^{\theta_j} y^{\vartheta_j-1/2} \\
&\qquad + \su{\N(n) >  y N(d^2 \ff_\psi)/X} \N(n)^{\theta_j} \(\frac{ \N( d^2\ff_\psi )}{X\N(n)}\)^2 y^{\vartheta_j+3/2} \Biggr)  \\
 &\ll X  \sum_{j=1}^4  \Biggl( y^{\vartheta_j-1/2} \su{d \equiv 1 \mod 3\\ \N(d) \le D} \N(d)^{\theta_j-2}    \( \frac{y\N(d^2)}{X} \)^{1+\theta_j}   \\
& \qquad + y^{\vartheta_j+3/2}  \su{d \equiv 1 \mod 3\\ \N(d) \le D} \N(d)^{\theta_j-2}   \(\frac{ \N( d^2 )}{X}\)^2  \su{\N(n) >   y N(d^2)/X } \N(n)^{\theta_j-2} \Biggr) \\
&\ll \sum_{j=1}^4 \frac{y^{\vartheta_j+\theta_j+1/2} D^{1+3\theta_j}}{X^{\theta_j}}.}

The estimates for $S_{12}(y)$ and $S_{13}(y)$ are similarly carried out and result in the same bound, so we conclude that
\eqn{\label{eq:S1Bound}
S_1(y) \ll \sum_{j=1}^4 \frac{y^{\vartheta_j+\theta_j+1/2} D^{1+3\theta_j}}{X^{\theta_j}}.}

\subsection{Finding the maximal support}

Combining \eqref{eq:S2bound} (under GRH) and \eqref{eq:S1Bound}, and assuming that $1 \le D \le X^{1/3}$ and $y \le X^v$, we obtain 
\bs{
S_{\sF_3'} (y) &\ll X^\eps \left( \frac{y^{4/3} D^{3/2}}{X^{1/6}} + \frac{y^{5/4} D^{7/4}}{X^{1/4}} + \frac{y^{17/12} D^{5/4} }{X^{1/12}} + \frac{y^{7/5} D^{13/10}}{X^{1/10+\eps}} \right) \\
& \quad + \frac {Xy^{1/2}}D \log^3 X + yX^{1/3}\log X.}

Using partial integration as we did at the end of the proof of Lemma \ref{lem:sizeofwholefamily}, and replacing $y=X^v$, this gives
\mult{
\eft(X) 
\ll X^\eps \bigl({X^{5v/6 - 1/6} D^{3/2}} + {X^{3v/4-1/4} D^{7/4}} + {X^{11v/12 - 1/12} D^{5/4} }\\
+ {X^{9v/10-1/10} D^{13/10}}\bigr)  + \frac {X \log^3{X}}D  + X^{v/2+{1/3}}\log X,}
and choosing $D = X^\eps$, we have that
\eqs{\eft(X) = \underline{o}(X) \quad \text{for any $v < 13/11$.}}
We remark that the bound on $v$ comes from the term $X^{11v/12 - 1/12} D^{5/4}$ above, which in turns comes from the term 
$Z^{5/6} N(r)^{1/12 + \eps}$ of  Theorem \ref{average-GS}.

This completes the proof of Theorem \ref{thm-main}, assuming Theorem \ref{average-GS}, which is proven in the next two sections.

\section{Proof of Theorem \ref{average-GS}}  \label{section-Patterson} \label{section-generalized-HB}

The proof of Theorem \ref{average-GS} is a slight generalization of the proof of Theorem 1 of  \cite{HB2000}, where the author proves the bound
\eqs{\su{c \equiv 1 \mod 3\\ \N(c) \le X}
\frac{g(c)}{\N(c)^{1/2}}  \Lambda(c) \ll X^{5/6+\varepsilon}.}
Comparing the above and the statement of Theorem \ref{average-GS}, and replacing  $g(c)=g(1,c)$ by $g(r,c) \lambda(c)$ when $(r,c)=1$, we need to keep the dependence on the shift $r$ (the character $\lambda$ has absolutely bounded conductor).

\begin{lem}[Vaughan's Identity {\cite[p. 101]{HB2000}}] \label{V-Identity}
Let $r \in \ring$, and 
\eqs{\Sigma_{j}(Z, r,u) = \sum_{a,b,c} \Lambda(a) \mu_K(b) \tilde{g}_\lambda(r,abc),\qquad (0 \le j \le 4)}
where $\tilde g_\lambda$ is defined in \eqref{H(Z,r,Lambda)} and each sum runs over $a,b,c \in \Z[\omega]$ which are square-free with $a,b,c \equiv 1 \mod 3$, $Z < N(abc) \le 2Z$,  $(r, abc)=1$, and subject to the conditions
\begin{eqnarray*}
&N(bc) \le u, \quad\quad &j=0\\
&N(b) \le u, \quad\quad &j=1\\
&N(ab) \le u, \quad\quad &j=2'\\
&N(a), N(b) \le u < N(ab) \quad\quad &j=2''\\
&N(b) \le u < N(a), N(bc) \quad\quad &j=3\\
&N(a)< N(bc) \le u, \quad\quad &j=4.\\
\end{eqnarray*}
Then,
\eqs{\Sigma_0(Z,r,u) = \Sigma_{1} (Z,r,u)- \Sigma_{2'}(Z,r,u) - \Sigma_{2''} (Z,r,u) - \Sigma_3 (Z,r,u)+\Sigma_4(Z,r,u).}
Furthermore, $$
\Sigma_0(Z,r,u) = H(2Z,r,\lambda) - H(Z,r,\lambda),
$$
and if we suppose that $1 \le u \le Z^{1/3}$, then $\Sigma_4(Z,r,u)=0$.
\end{lem}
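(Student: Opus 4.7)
The plan is to derive all three statements from careful Möbius-type manipulations of the inner sums over divisors, exploiting the fact (from Lemma \ref{cubic-GS-lemma2} applied with $j=0$) that $\tilde g_\lambda(r, c)$ vanishes unless $c$ is squarefree and coprime to $r$. This lets us treat every $\Sigma_j$ as an effective sum over pairwise coprime triples with $abc$ squarefree.

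For the identification $\Sigma_0 = H(2Z, r, \lambda) - H(Z, r, \lambda)$, I would interchange the order of summation so that $n = abc$ is the outer variable. For each squarefree $n$ coprime to $r$ with $Z < \N(n) \le 2Z$, the inner weight is
\begin{equation*}
\sum_{\substack{abc = n \\ \N(bc) \le u}} \Lambda(a) \mu_K(b) = \sum_{a \mid n} \Lambda(a) \sum_{\substack{bc = n/a \\ \N(bc) \le u}} \mu_K(b),
\end{equation*}
and for fixed $a$ the inner $\mu_K$-sum equals $[n/a = 1]$ by the standard identity $\sum_{b \mid m} \mu_K(b) = [m = 1]$, with the constraint $\N(bc) \le u$ automatic when $n/a = 1$. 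Only $a = n$ survives, contributing $\Lambda(n)$, and summing over $n$ in $Z < \N(n) \le 2Z$ yields the claim.

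For the decomposition $\Sigma_0 = \Sigma_1 - \Sigma_{2'} - \Sigma_{2''} - \Sigma_3 + \Sigma_4$, a region-by-region comparison of the indicator conditions on the squarefree triples suffices. Since $\Sigma_{2'}$ and $\Sigma_{2''}$ together cover exactly $\{\N(a), \N(b) \le u\}$, the alternating sum $\Sigma_1 - \Sigma_{2'} - \Sigma_{2''} - \Sigma_3$ telescopes to the contribution from $\{\N(a) > u,\, \N(bc) \le u\}$ (using that $\N(bc) \le u$ automatically forces $\N(b) \le u$). Subtracting this from $\Sigma_0$ leaves a residue supported on $\{\N(a) \le u,\, \N(bc) \le u\}$, which differs from $\Sigma_4$, supported on $\{\N(a) < \N(bc) \le u\}$, only on the sub-region $\{\N(a) \ge \N(bc)\}$. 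A second application of the Möbius collapse from the previous step (reduce to $a = \pi$ prime via $\Lambda$, then apply $\sum_{b \mid m} \mu_K(b) = [m = 1]$) shows that this sub-region can contribute only when $abc$ itself is a prime of norm $\le u$, a situation excluded by $\N(abc) > Z$ together with $u \le Z$. Hence the residue coincides with $\Sigma_4$, establishing the identity.

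The vanishing $\Sigma_4 = 0$ under $u \le Z^{1/3}$ is a one-line size check: any triple contributing to $\Sigma_4$ satisfies
\begin{equation*}
\N(abc) = \N(a) \cdot \N(bc) < \N(bc)^2 \le u^2 \le Z^{2/3} < Z,
\end{equation*}
contradicting $\N(abc) > Z$. The main obstacle I expect is the bookkeeping in the second step, where one must verify that the residual region collapses precisely to $\Sigma_4$ (rather than to some shifted version with extra boundary terms); the cleanest route is the Möbius reduction above, which isolates the single borderline case, namely $n$ itself being a small prime, that the range $Z < \N(abc) \le 2Z$ forbids.
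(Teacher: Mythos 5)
The paper cites this lemma to Heath-Brown and provides no proof of its own, so your argument is a reconstruction rather than a comparison; it is essentially correct and proceeds by the natural route. The Möbius collapse giving $\Sigma_0 = H(2Z,r,\lambda) - H(Z,r,\lambda)$ is right: the key point, which you correctly use, is that $\tilde{g}_\lambda(r,\cdot)$ is supported on squarefree arguments coprime to $r$, so for each squarefree $n=abc$ the inner divisor sum collapses via $\sum_{b\mid m}\mu_K(b)=[m=1]$ to give $\Lambda(n)$, the constraint $\N(bc)\le u$ being vacuous once $bc=1$. The indicator-function telescoping $\Sigma_1-\Sigma_{2'}-\Sigma_{2''}-\Sigma_3 = [\N(a)>u][\N(bc)\le u]$ is also correct, and so is the resulting reduction to a residual sum supported on $\{\N(bc)\le\N(a)\le u\}$. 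Your one-line size check giving $\Sigma_4=0$ when $u\le Z^{1/3}$ is correct as well.

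One point worth flagging: after your second Möbius collapse the residual sum reduces to $\sum_\pi \Lambda(\pi)\tilde{g}_\lambda(r,\pi)$ over primes $\pi$ with $Z<\N(\pi)\le 2Z$ and $\N(\pi)\le u$. You dispose of this by invoking $u\le Z$, which is the right move, but you state that condition without saying where it comes from — and the lemma as printed only hypothesises $1\le u\le Z^{1/3}$ for the $\Sigma_4=0$ conclusion, asserting the decomposition identity with no constraint on $u$ at all. In fact the decomposition identity itself already requires $u\le Z$ (it visibly fails if, say, $u=2Z$, when the residual sum over primes in $(Z,2Z]$ is nonzero). This is an imprecision inherited from the lemma's statement rather than a defect in your argument; in all applications in the paper one takes $u\le Z^{1/3}$, so nothing downstream is affected, but your write-up should make the dependence of the decomposition identity on $u\le Z$ (or $u\le Z^{1/3}$) explicit rather than invoking it silently.
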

When using Lemma \ref{V-Identity}, the sums $\Sigma_j$ are divided into the so-called Type I sums ($\Sigma_{1}$ and $\Sigma_{2'}$) and Type 2 (bilinear) sums ($\Sigma_{2''}$ and $\Sigma_{3}$), and each type is bounded with a different technique. For the Type II sums, the proof of \cite{HB2000} goes through in the exact same way, replacing $\tilde{g}(1,e)$ by $\tilde{g}_\lambda(r,e)$ with the obvious modifications, see Section \ref{sub-type2}. For the Type I sums, we have to use a general version of the work of Patterson for the distribution of the generalized Gauss sums $\tilde{g}_\lambda(r,e)$, and keep the dependence on the parameter $r$, see Section \ref{sub-type1}.

\subsection{Type I Sums} \label{sub-type1}
The so-called Type I sums of Vaughan's formula are $\Sigma_{1}(Z,r,u)$ and $\Sigma_{2'}(Z,r,u)$ and they are bounded using the work of Patterson on the distribution of the generalized Gauss sums $\tilde{g}_\lambda(r, c)$. 

\begin{lem} %\label{lem:typeIsums} 
\label{type1}  
For $1 \le u \le Z^{1/3}$, 
\bs{|\Sigma_{1} (Z,r,u)| &\le 3 \log (2Z) \su{a \equiv 1 \mod 3\\(a,r)=1\\ \N(a) \le u} |\mu_K(a)| \sup_{Z\le z\le 2Z} |F_a(z,r,\lambda)| \\
|\Sigma_{2'} (Z,r,u)| &\le 2\log u \su{a\equiv 1 \mod 3\\(a,r)=1\\ \N(a) \le u} |\mu_K(a)|\sup_{Z\le z\le 2Z} |F_a(z,r,\lambda)|,}
where
\eqn{\label{eq:F}
F_a(z,r,\lambda) = \su{b \equiv 1 \mod 3\\ (r,b)=1, a \mid b\\ \N(b) \le z} \tilde{g}_\lambda(r,b).}
\end{lem}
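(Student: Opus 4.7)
My plan is to carry out a standard Vaughan-type rearrangement separately for each of the two sums, in each case fixing the short variable (the one constrained by $\N(\cdot) \le u$), collapsing the remaining two variables into one long variable, and then recognising the resulting inner sum as a difference of two values of $F_a(z,r,\lambda)$.

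For $\Sigma_{2'}$, where $\N(ab) \le u$, I would set $d = ab$ (square-free, with $\N(d) \le u$) and use the elementary identity
\eqs{\sum_{\substack{d = a'b' \\ a' \text{ prime},\, b' \text{ sqf}}} \Lambda(a') \mu_K(b') = -\mu_K(d) \log \N(d),}
valid for square-free $d \equiv 1 \mod 3$ (every prime divisor $a'$ of $d$ contributes $\log \N(a')$ times $\mu_K(d/a') = -\mu_K(d)$). The remaining sum over $c$ with $Z < \N(dc) \le 2Z$ and $(r,dc)=1$ is then precisely $F_d(2Z,r,\lambda) - F_d(Z,r,\lambda)$, whose absolute value is bounded by $2\sup_{Z \le z \le 2Z}|F_d(z,r,\lambda)|$. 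Combined with $|\log \N(d)| \le \log u$ and the triangle inequality over $d$, this delivers the claimed $2\log u$ factor.

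For $\Sigma_1$, where only $\N(b) \le u$ is imposed, I would instead fix $b$ and combine $m = abc$ as the long variable, so that
\eqs{\Sigma_1 = \sum_{\substack{b \equiv 1 \,(3),\, \N(b) \le u \\ (r,b)=1}} \mu_K(b) \sum_{\substack{m \text{ sqf},\, Z < \N(m) \le 2Z \\ b \mid m,\, (r,m)=1}} \tilde g_\lambda(r,m) \sum_{\substack{a \text{ prime} \\ a \mid m/b}} \Lambda(a).}
For square-free $m$, the innermost $a$-sum equals $\log \N(m/b)$. Splitting $\log \N(m/b) = \log \N(m) - \log \N(b)$ and performing a single partial summation in $m$ against the step function $F_b(\,\cdot\,,r,\lambda)$ reduces each resulting piece to $\sup_{Z \le z \le 2Z}|F_b(z,r,\lambda)|$ at the cost of one additional $\log(2Z)$ factor, yielding the stated $3\log(2Z)$ bound.

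One small bookkeeping point is that $F_a(z,r,\lambda)$ sums over $b$ with no explicit square-freeness restriction, whereas the $\Sigma_j$ are supported on square-free $m$. This is harmless: Lemma \ref{cubic-GS-lemma2} gives $g(r,\pi^k)=0$ whenever $(r,\pi)=1$ and $k \ge 2$, so $\tilde g_\lambda(r,b)$ vanishes automatically for non-square-free $b$ coprime to $r$, and the two inner sums coincide. Neither step of the argument is genuinely difficult---this lemma is a lossless reorganisation. The substantive content of the section, and the only real obstacle, will be the bound on $F_a(z,r,\lambda)$ itself, which is where the generalised Heath-Brown--Patterson analysis of cubic Gauss sums at prime arguments (with the shift $r$) must be deployed and tracked uniformly in $\N(r)$.
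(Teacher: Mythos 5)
Your proposal is correct and follows essentially the same route as the paper: for $\Sigma_{2'}$ you collapse $d=ab$ and use the convolution identity $\sum_{a\mid d}\Lambda(a)\mu_K(d/a)=-\mu_K(d)\log\N(d)$ on square-free $d$, while for $\Sigma_1$ you fix $b$, collapse $m=abc$, recognise the inner $a$-sum as $\log\N(m/b)$, and do a partial summation against $F_b$. The only deviation is stylistic (splitting $\log\N(m/b)=\log\N(m)-\log\N(b)$ before summing by parts, rather than summing by parts against $\log\N(m/b)$ directly); with $u\le Z^{1/3}$ this still lands within the stated constant $3\log(2Z)$, and your bookkeeping remark that $\tilde g_\lambda(r,b)$ vanishes for non-square-free $b$ coprime to $r$ (Lemma \ref{cubic-GS-lemma2}) is exactly the observation the paper relies on to identify the inner sums with differences of $F_a$.
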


\begin{proof}
We have 
\bs{\Sigma_1 (Z,r,u) &=  \su{N(abc) \sim Z \\ N(b) \le u\\(abc,r)=1}  \Lambda(a) \mu_K (b) \tilde{g}_\lambda(r,abc) = \su{N(bd) \sim Z \\N(b) \le u\\(bd,r)=1} \mu_K(b)  \tilde{g}_\lambda(r,bd) \sum_{a \mid d} \Lambda(a) 
 \\ &= \su{N(b) \le u\\(b,r)=1}\mu_K(b)  \su{N(d) \sim Z \\ (d,r)=1 \\b \mid d}  \tilde{g}_\lambda(r,d) \log N(d/b) ,
}
and using partial integration for the inner sum gives the first assertion. As for the second, we have
\bs{\Sigma_{2'}(Z,r,u) &=  \sum_{\substack{ N(abc) \sim Z \\ N(ab) \le u\\(abc,r)=1}}  \Lambda(a) \mu_K(b) \tilde{g}_\lambda(r,abc) \\
&= \su{ \N(d) \le u\\ (d,r)=1} \biggl(\, \su{ ab = d} \mu_K(b) \Lambda (a) \biggr) \su{c\equiv 1 \mod 3\\(c,r)=1, d\mid c\\ \N(c) \sim Z}  \tilde{g}_\lambda(r,c) \\
&\le \su{\N(d) \le u\\ (d,r)=1} |\mu_K(d) |  | F(2Z,a,r,\lambda) - F(Z,a,r,\lambda)| \log N(d),} where
on the second line we used the fact that the sum in the parenthesis is supported only on square-free $d$. This proves the second assertion. 
\end{proof}

\kommentar{
Using \eqref{eq:Bound4F} we derive the following bound
\mult{\Sigma_{1i} (Z,r,u) \ll N(r_1r_3^\ast )^\eps \Bigl( Z^{1/2+\eps} \N(r_1r_2^2)^{\frac 14} u^{\frac 34}  + Z^{3/4+\eps} \N(r_1r_2^2)^{1/8} u^{3/8} \\
+  Z^{1/2+\eps} \log Z + Z^{5/6+\eps} \N (r_1)^{-1/6} \Bigr) \log (2Z)
}
where $r =r_1r_2^2r_3^3 \ldots$ and the last term is needed only if $r_2=1$.
\ccom{Which $u$ did we use?}

Below we show that
\eqs{\Sigma_{2,3} (Z,r,u) \ll Z^{1/2+\eps} \( Z^{1/2}u^{-1/2} +Z^{1/3}\)
.}

\ccom{Did you put the reference of the lemma that you use to optimize on dropbox? Sorry I cannot find it.}
Assuming this result we have shown that for $u \le Z^{1/3}$,
\mult{H(Z,r,\lambda) \ll N(r_1r_3^\ast )^\eps \Bigl( Z^{1/2+\eps} \N(r_1r_2^2)^{\frac 14} u^{\frac 34}  +  Z^{1/2+\eps} \log Z + Z^{3/4+\eps} \N(r_1r_2^2)^{1/8} u^{3/8} \\
+ Z^{5/6+\eps} \N (r_1)^{-1/6} \Bigr)  + Z^{1/2+\eps} \( Z^{1/2}u^{-1/2} +Z^{1/3}\).
} 
For $u\in [1,Z^{1/3}]$, we can bound the terms containing $u$ by
\mult{Z^{1/2+\eps} \N(r)^{\frac 14+\eps} + Z^{3/4+\eps} \N(r)^{1/8+\eps} + Z^{5/6+\eps} +  Z^{4/5+\eps} \N(r)^{\frac 1{10}+2\eps/5}  + Z^{6/7+\eps} \N(r)^{\frac 1{14}+4\eps/7} .
}
Thus, we conclude that
\mult{H(Z,r,\lambda) \ll Z^{1/2+\eps}N(r)^\eps + Z^{5/6+\eps} \N (r_1)^{\eps-1/6} + Z^{5/6+\eps} + Z^{1/2+\eps} \N(r)^{\frac 14+\eps} \\+ Z^{3/4+\eps} \N(r)^{1/8+\eps} + Z^{5/6+\eps} +  Z^{4/5+\eps} \N(r)^{\frac 1{10}+2\eps/5}  + Z^{6/7+\eps} \N(r)^{\frac 1{14}+4\eps/7}  \\
\ll Z^{5/6+\eps} \N (r)^{\eps} + Z^{1/2+\eps} \N(r)^{\frac 14+\eps} \\+ Z^{3/4+\eps} \N(r)^{\frac 18+\eps} +  Z^{4/5+\eps} \N(r)^{\frac 1{10}+2\eps/5}  + Z^{6/7+\eps} \N(r)^{\frac 1{14}+4\eps/7} .
}}

\subsection{Type II (Bilinear) Sums} \label{sub-type2}

In this section, we bound the sums
\bs{\Sigma_{2''}(Z,r,u) &= \su{N(abc) \sim Z \\ (abc,r) = 1\\ \N(a), N(b) \le u < \N(ab)} 
\Lambda(a) \mu_K(b) \tilde{g}_\lambda(r, abc) \\
\Sigma_3 (Z,r,u) &= \su{N(abc)\sim Z \\(r,abc)=1\\ \N(a), \N(bc) >u \\ N(b) \le u} \Lambda(a) \mu_K(b) \tilde{g}_\lambda(r, abc).
}
Note that since $(abc,r)=1$, $g(r,abc) = \overline{\chi_{ab}(c)} g(r,ab) g(r,c)$ whenever $(ab,c) = 1$ and is zero otherwise, by Lemmas \ref{cubic-GS-lemma1} and \ref{cubic-GS-lemma2}. Therefore, in all cases, we have
\eqs{\tilde{g}_\lambda(r,abc) = \overline{\chi_{ab}(c)} \tilde{g}_\lambda(r,ab) \tilde{g}_\lambda(r,c),}
and we can write
\bs{\Sigma_{2''}(Z,r,u) &= \su{\N(vw) \sim Z
\\ \N(v), N(w) > u} A(v) B(w) \overline{\chi_v(w)} \\
\Sigma_{3} (Z,r,u) &= \su{\N(vw) \sim Z\\ \N(v), N(w) > u} C(v) D(w) \overline{\chi_v(w)}}
where we put
\bs{A(v) &=  \su{ab=v \\ \N(a), \N(b) \le u}  \Lambda(a) \mu_K(b) \tilde{g}_\lambda(r, ab), \quad B(w)	= \tilde{g}_\lambda(r, w)\\
C(v) &= \Lambda(v) \tilde{g}_\lambda(r, v) , \quad
D(w) = \sum_{\substack{ bc=w\\ N(b) \le u}} \mu_K(b) 	\tilde{g}_\lambda(r, bc) }
whenever $(r, vw)=1$ and zero otherwise. Notice that we have used the fact that  $u \le Z^{1/3}$  to write $N(w) > u$ in the sum for $\Sigma_{2''}$. 

Note also that 
\eqs{A(v), B(w), C(v), D(w) \le_\varepsilon X^\varepsilon,} 
for all relevant $v,w$ and that the functions are supported on square-free integers in $\Z[\omega]$ which are congruent to 1 modulo 3 by the hypothesis on $a,b,c$.
We can now use directly the proof of  \cite[Lemma 2]{HB2000} with the new functions $A,B,C,D$ as defined above, which differ 
only by fact that $\tilde{g}(v) = \tilde{g}(1,v)$ is replaced by $\tilde{g}_\lambda(r,v)$, which does not change the size.
This uses the large sieve \cite[Theorem 2]{HB2000} for cubic characters to catch the oscillation of the character $\chi_v(w)$ in the above equations for $\Sigma_{2''}$ and $\Sigma_{3}$, and we get the following.
\begin{lem} %\label{lemma-bound-typeII} 
\label{type2}
	For any $\eps>0$, and $1 \le u \le Z^{1/3}$, we have
\eqs{\Sigma_{2''} (Z,r,u),  \; \Sigma_{3} (Z,r,u)  \ll Z^{1/2+\eps} \bigl( Z^{1/2}u^{-1/2} +Z^{1/3}\bigr),
}
where the implied constant depends only on $\eps$. In particular, choosing $u=Z^{1/3}$, this gives
\eqs{\Sigma_2 (Z,r,Z^{1/3}), \; \Sigma_3(Z,r,Z^{1/3}) \ll Z^{5/6+\eps}.}
\end{lem}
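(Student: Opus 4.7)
The plan is to reduce the two Type~II sums to a single bilinear form in cubic characters, then invoke the cubic large sieve exactly as in \cite[Lemma~2]{HB2000}. Observe first that the factorization $\tilde g_\lambda(r, abc) = \overline{\chi_{ab}(c)}\,\tilde g_\lambda(r, ab)\,\tilde g_\lambda(r, c)$ (valid for $abc$ squarefree coprime to $r$) is already set up in the paragraph preceding the statement, writing both $\Sigma_{2''}$ and $\Sigma_3$ as $\sum_{N(vw)\sim Z,\; N(v),N(w)>u} X(v)Y(w)\overline{\chi_v(w)}$ for suitable coefficient sequences $X,Y$. The key remark is that on squarefree arguments $v\equiv 1\mod 3$ coprime to $r$, the bound $|g(r,v)|\le \sqrt{N(v)}$ gives $|\tilde g_\lambda(r,v)|\ll 1$ uniformly in $r$ and $\lambda$; hence the coefficients $A,B,C,D$ defined in the paper satisfy $|A|,|B|,|C|,|D|\ll_\eps Z^\eps$, the same divisor-type bound used by Heath-Brown. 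Thus the presence of the shift $r$ and of the bounded-conductor character $\lambda$ contributes only a factor $Z^\eps$, and the resulting bilinear forms are of exactly the shape treated in \cite{HB2000}.

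Next I would carry out the standard Cauchy--Schwarz plus large-sieve argument. Split the range of $N(v)$ dyadically into intervals $V\le N(v)<2V$ with $u<V\le Z/u$, so that $w$ is confined to $W\le N(w)<2W$ with $VW\asymp Z$. Cauchy--Schwarz in $v$ gives
\eqs{\Big|\sum_{v,w} X(v)Y(w)\overline{\chi_v(w)}\Big|^2 \le \Big(\sum_{v}|X(v)|^2\Big)\sum_{V\le N(v)<2V}\Big|\sum_{w}Y(w)\overline{\chi_v(w)}\Big|^2,}
and the first factor is $\ll V Z^\eps$. The cubic large sieve for squarefree moduli, \cite[Theorem~2]{HB2000}, applied in the form
\eqs{\sum_{V\le N(v)<2V}\Big|\sum_{w} a_w \chi_v(w)\Big|^2 \ll (V+W+(VW)^{2/3})^{1+\eps}\sum_{w}|a_w|^2,}
together with $\sum_w|Y(w)|^2\ll W Z^\eps$, then bounds the inner sum by $(V+W+Z^{2/3})^{1+\eps}\,W Z^\eps$.

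Combining these estimates and using $VW\asymp Z$ yields, after taking square roots,
\eqs{\sum_{V\le N(v)<2V}\Big|\sum_w X(v)Y(w)\overline{\chi_v(w)}\Big| \ll Z^{\eps}\bigl(V^{1/2}W\;+\;V\,W^{1/2}\;+\;V^{1/2}W^{1/2}Z^{1/3}\bigr)^{1/2}\cdot(VW)^{1/2},}
which after balancing reduces to a bound of the form $Z^{1/2+\eps}(Z^{1/2}V^{-1/2}+Z^{1/2}W^{-1/2}+Z^{1/3})$. Summing the $O(\log Z)$ dyadic contributions and using the constraint $V,W>u$ produces the claimed estimate
\eqs{\Sigma_{2''}(Z,r,u),\;\Sigma_3(Z,r,u)\ll Z^{1/2+\eps}\bigl(Z^{1/2}u^{-1/2}+Z^{1/3}\bigr).}

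The main obstacle, which Heath-Brown already resolved, is the large-sieve input itself: the $Z^{1/3}$ term (which becomes $Z^{5/6}$ after the prefactor $Z^{1/2+\eps}$) reflects the failure of the naive large sieve for cubic characters and requires the refined $(VW)^{2/3}$ term in \cite[Theorem~2]{HB2000}. In our setting the only verification needed is that replacing $\tilde g(v)$ by $\tilde g_\lambda(r,v)$ preserves both the pointwise bound and the decoupling $\tilde g_\lambda(r,abc)=\overline{\chi_{ab}(c)}\tilde g_\lambda(r,ab)\tilde g_\lambda(r,c)$ on squarefree arguments coprime to $r$; once these are in hand, Heath-Brown's proof applies verbatim and produces a bound independent of $r$.
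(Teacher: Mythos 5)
Your argument follows the same route as the paper: factor $\tilde g_\lambda(r,abc)$ to exhibit the bilinear form, note that the coefficients retain the $Z^\eps$ divisor-type bound since $|\tilde g_\lambda(r,v)|\ll 1$ on squarefree $v$ coprime to $r$, and then apply Heath-Brown's Cauchy--Schwarz plus cubic-large-sieve argument from \cite[Lemma~2, Theorem~2]{HB2000}, observing that the shift $r$ and the bounded-conductor twist $\lambda$ enter only through these coefficient bounds. The paper cites Heath-Brown's proof and the specific estimates (bottom of p.~104 and equation~(9) on p.~103 of \cite{HB2000}) to extract the $V,W\le Z/u$ constraint, while you carry out the Cauchy--Schwarz and large-sieve step explicitly; note one algebraic slip in your intermediate display, which should read $Z^\eps(VW)^{1/2}\bigl(V+W+Z^{2/3}\bigr)^{1/2}$, although your final bound $Z^{1+\eps}(V^{-1/2}+W^{-1/2})+Z^{5/6+\eps}$ is the correct one.
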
 
\begin{proof}
We remark that  \cite[Lemma 2]{HB2000} states only the second part of the above lemma. For the first part, one uses the bound on the bottom of page 104  which leads
$$
\Sigma_{2''} (Z,r,u),  \; \Sigma_{3} (Z,r,u)  \ll Z^{1/2+\eps} \bigl( V^{1/2} + W^{1/2} +Z^{1/3}\bigr)
$$
together with (9) of page 103 which implies that $V, W \leq Z/u.$ \end{proof}

\subsection{Putting things together} \label{section-PTT}
Using Lemmas \ref{V-Identity}, \ref{type1} and \ref{type2}, we have for $1 \le u \le Z^{1/3}$ that 
\bs{H(2Z,r,\lambda) - H(Z,r,\lambda) &\ll \log{Z} \su{a \equiv 1 \mod 3\\(a,r)=1\\ \N(a) \le u} \sup_{Z\le z\le 2Z} |F_a(z,r,\lambda)|  \\
& \qquad + Z^{1/2+\varepsilon} \left(Z^{1/2}u^{-1/2} +Z^{1/3}\right).}

Assuming Proposition \ref{Bound4F}, which gives an upper bound for $|F_a(z,r,\lambda)|$, we deduce that
\mult{H(2Z,r,\lambda) - H(Z,r,\lambda) \ll Z^{1/2+\varepsilon} \left(Z^{1/2}u^{-1/2} +Z^{1/3}\right) \\
+ Z^{5/6+\eps} \N (r_1)^{-1/6+\eps} \N(r_3^\ast)^\eps + u^{1/2} Z^{2/3+\eps} \N(r)^{1/6+\eps} + u^{3/4} Z^{1/2+\eps} \N(r)^{\frac 14+\eps}  
.}
Finally, using Lemma \ref{lemma-kolesnik} with $u \in [1, Z^{1/3}]$ to balance the terms containing $u$ proves Theorem \ref{average-GS}, where the term $Z^{5/6+\varepsilon} N(r)^{1/12+\varepsilon}$ which gives the final estimate corresponds to $u = Z^{1/3} N(r)^{-1/6}$. 

\section{Estimate of $F_a(z,r,\lambda)$}
\label{section6}
Recall from \eqref{eq:F} that
\eqs{F_a(z,r,\lambda) = \su{b \equiv 1 \mod 3\\ a \mid b, \;(b,r)=1\\ \N(b) \le z} \tilde{g}_\lambda(r,b),}
where $\tilde{g}_\lambda (r,b)$ was defined in \eqref{H(Z,r,Lambda)}. To evaluate $F_a(z,r,\lambda)$, we will use the non-normalized generating function defined by
\begin{equation*} %\label{def-ha} 
h_a (r,\lambda, s) = \su{b \equiv 1 \mod 3 \\ a\mid b, \; (b,r)=1}  g_\lambda(r,b) \N(b)^{-s},
\end{equation*}
where $g_\lambda (r,b) = \lambda(b) g(r,b) = \tilde{g}_\lambda (r,b) \N(b)^{1/2}$. The following lemma contains the analytic information on the generating function $h_a (r,\lambda, s)$.

\begin{lem}  \label{lem:sizeof_ha}
Write $r= r_1 r_2^2 r_3^3$, where $r_1, r_2$ are square-free and co-prime, and $r_i \equiv 1 \mod 3$. Let $a \equiv 1 \mod 3$ be square-free and $(a, r)=1$. Then, $h_a (r, \lambda, s+1/2)$ can be meromorphically continued to the whole complex plane; it is entire for $\Re(s) > 1/2$ except possibly for a simple pole at $s=5/6$ with residue 
\eqs{ \mathop{\res}_{s=5/6} h_a (r,\lambda,s+\textstyle{\frac12})  \ll N(a )^{-1} \N (r_1)^{-1/6} \log 2\N(ar_1) \log 2\N(r_3^\ast)}
when $r_2=1$ and $\lambda^3 \neq \lambda_0$, and zero otherwise.

Let $\eps>0$ and $\sigma_1 = 1 + \eps$. For $s =\sigma+it$ satisfying $\sigma_1 -1/2 \le \sigma \le \sigma_1$ and $|s-5/6|> 1/12$, 
\begin{equation} \label{bound-convexity}
{{h}}_a (r,\lambda, s+\textstyle{\frac12}) \ll \N(r_1r_2^2)^{\frac 12 (\sigma_1-\sigma)} N(a )^{\frac 12 (\sigma_1-\sigma)-\sigma} N(ar_1r_3^\ast )^\eps (1+t^2)^{\sigma_1-\sigma}. \end{equation}
Furthermore,
\eqn{\label{IntegralMeanValuebound4h_a}
\int_{-T}^{T} | h_a(r,\lambda, \sigma_1+it) |^2 dt \ll T^2 N(a)^{-1/2-\eps} N(r_1r_2^2r_3^\ast)^{1/2+2\eps}.}
\end{lem}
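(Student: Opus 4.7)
\smallskip

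\noindent\textbf{Proof plan for Lemma \ref{lem:sizeof_ha}.}

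The plan is to reduce $h_a(r,\lambda,s)$ to the Kubota--Patterson cubic theta Dirichlet series and then invoke its known analytic properties. The Dirichlet series $\psi(s,r) = \sum_{c \equiv 1 \,(3)} g(r,c)\, N(c)^{-s}$ (and its twists by characters of small conductor) is essentially the Mellin transform of the Kubota--Patterson cubic theta function; by Patterson's theorem it continues meromorphically to $\mathbb{C}$, is holomorphic except for a possible simple pole at $s = 4/3$, and satisfies a functional equation relating $s$ to $2-s$ with gamma factors and an explicit coefficient in $N(r_1 r_2^2)^{1/2}$. The pole at $4/3$ (which after the shift $s \mapsto s+\tfrac12$ becomes $s = 5/6$) is present precisely when $r_2 = 1$, and its residue has the ``arithmetic factor" of Patterson, essentially $N(r_1)^{-1/6}$ times a short finite Euler product over primes dividing $r_3^\ast$.

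First I would remove the coprimality and divisibility conditions. Writing $b = a b'$ with $(b',r)=1$, Lemmas \ref{cubic-GS-lemma1}--\ref{cubic-GS-lemma2} let us factor $g(r, a b') = \overline{\chi_a(b')}\, g(r,a)\, g(r, b')$ when $(a, b') = 1$, and the coprimality of $b'$ with the primes of $a$ and $r$ can be imposed via Möbius inversion or via the finite Euler product local factors at those primes. The result is that $h_a(r,\lambda, s)$ equals a finite linear combination of twists
\begin{equation*}
\frac{g(r,a)\,\lambda(a)}{N(a)^{s}}\; \psi\!\left(s,\; r';\; \lambda\overline{\chi_a}\right) \cdot E_{a r_3^\ast}(s),
\end{equation*}
where $r'$ ranges over a controlled set of divisors encoding the multiplicity structure of $r$, and $E_{a r_3^\ast}(s)$ is a finite Euler product supported on primes dividing $a r_3^\ast$, uniformly bounded by $N(a r_3^\ast)^\eps$ for $\re(s) \ge \sigma_1 - 1/2$.

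Next, analytic continuation and the pole follow from Patterson's theorem applied to each $\psi$ piece: the pole survives only when the appropriate $\psi$ factor has its pole, which forces $r_2 = 1$ and $\lambda^3$ nontrivial (otherwise a Dirichlet $L$-function in the Euler factor vanishes the residue). The residue bound $\ll N(a)^{-1} N(r_1)^{-1/6}\log(2N(ar_1))\log(2N(r_3^\ast))$ comes from Patterson's explicit residue formula combined with the extracted factor $g(r,a)\lambda(a) N(a)^{-s}|_{s=4/3}$, using $|g(r,a)| \le N(a)^{1/2}$ and trivial divisor bounds for the log factors. For the convexity bound \eqref{bound-convexity}, I would apply Phragm\'en--Lindel\"of on the strip $\sigma_1 - 1/2 \le \re(s) \le \sigma_1$: on the right edge $\sigma = \sigma_1$ absolute convergence (after removing the pole) gives $\ll N(a)^{-\sigma_1 + \eps} N(r)^{\eps}$, while on the left edge the functional equation for $\psi$, combined with the absolute convergence of the dual series and the Stirling estimate for the gamma factors, gives the $N(r_1 r_2^2)^{1/2}$ and $(1+|t|)$ growth.

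Finally, for the mean-value bound \eqref{IntegralMeanValuebound4h_a}, I would open the square and apply Parseval/Plancherel to pass to a short sum of $|g_\lambda(r,b)|^2 / N(b)^{2\sigma_1}$ weighted by the smoothing induced by the integral; alternatively, use Montgomery's mean-value theorem for Dirichlet series applied to the reduced form above, noting that the Dirichlet coefficients are supported on multiples of $a$ and on $b$ coprime to $r$, which produces the saving $N(a)^{-1/2-\eps}$ and $N(r_1 r_2^2 r_3^\ast)^{1/2 + 2\eps}$. The main obstacle will be bookkeeping the exact dependence on the factorization $r = r_1 r_2^2 r_3^3$ through the Patterson residue and functional equation, particularly verifying that the pole genuinely vanishes when $r_2 \neq 1$ and extracting the claimed $N(r_1)^{-1/6}$ shape from the residue (rather than, say, $N(r_1 r_2)^{-1/6}$); this requires a careful analysis of the local factors at primes dividing $r_2$ using Lemma \ref{cubic-GS-lemma2} with $k \neq j+1$.
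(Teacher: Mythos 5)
Your overall plan — decompose $h_a(r,\lambda,s)$ into a finite linear combination of shifted copies of the Patterson function $\psi$, then feed in Patterson's analytic continuation, pole structure, and convexity bound — matches the paper's proof (Lemma \ref{lem:ha(r,lambda,s)} plus Lemma \ref{lem:Patterson}). Your handling of the residue bound is in the right spirit. But there are two issues, one cosmetic and one genuine.

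The cosmetic issue: you justify the pole condition by saying ``a Dirichlet $L$-function in the Euler factor vanishes the residue.'' No Dirichlet $L$-function appears in the decomposition of $h_a$; the Euler factors are of the form $(1-\lambda(\pi)^3 \N(\pi)^{2-3s})^{-1}$ and are regular and nonzero at the point in question. The condition on $\lambda^3$ and on $r_2$ comes directly from Patterson's theorem for $\psi$ itself — the residue $\rho(r,\lambda)$ vanishes unless $r_2=1$ and $\lambda^3$ is principal — not from an $L$-factor you manufacture in the reduction.

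The genuine gap is in your proposal for the mean-value estimate \eqref{IntegralMeanValuebound4h_a}. You suggest opening the square with Parseval or invoking Montgomery's mean value theorem for Dirichlet series. But at $\sigma_1 = 1+\eps$ the series $h_a(r,\lambda,\sigma_1+it)$ (and $\psi(r,\lambda,\sigma_1+it)$ likewise) does \emph{not} converge absolutely: the coefficients $g(r,b)$ are of size $\N(b)^{1/2}$, so absolute convergence only begins at $\re(s) > 3/2$. The line $\sigma_1 = 1+\eps$ sits squarely inside the critical strip of $\psi$, and a naive Parseval computation or a textbook mean-value theorem for Dirichlet polynomials does not apply and does not produce the claimed $T^2 \N(r)^{1/2}$ shape. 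What is actually needed is the nontrivial mean square bound for $\psi$ on that line — in the paper this is Lemma \ref{lem:from-HB}, equation (20) of Heath-Brown's paper, which rests on the metaplectic spectral analysis of the cubic theta function rather than on any large sieve. Given that input, the correct step is to run Cauchy--Schwarz twice over the finite $d$- and $c$-sums of the decomposition from Lemma \ref{lem:ha(r,lambda,s)}, pulling the mean square inside so as to land on $\int_{-T}^T |\psi(dar_1r_2^2/c,\lambda,\sigma_1+it)|^2\,dt$, and only then apply the Heath-Brown bound to each of these. Without importing that specific mean-value input, the argument as you have sketched it would not close.
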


We assume Lemma \ref{lem:sizeof_ha} for now, and we prove the following proposition.
\begin{prop} Suppose $a \in \ring$ is square-free with $(a,r)=1$, and let $\lambda$ be a Dirichlet character on $\ring$. Write $r=r_1r_2^2 r_3^3$ with $r_i\equiv 1\mod 3,  \;r_1, r_2$ square-free and co-prime, and let $r_3^\ast$ be the product of the primes dividing $r_3$ but not $r_1r_2$. Then, for any $\eps>0$,  \label{Bound4F}
\bs{
F_a(z,r,\lambda)
	&\ll z^{5/6} N(a )^{-1+\eps} \N (r_1)^{-1/6+\eps} \N(r_3^\ast)^\eps + z^{2/3+\eps} N(a )^{-1/2} \N(r)^{1/6+\eps} \\
	&\qquad  + z^{1/2+\eps} N(a )^{-1/4} \N(r)^{1/4+\eps}.
}
where the first term appears only when $r_2 = 1$, and the implied constant depends on $\eps$ and the character $\lambda$.
\end{prop}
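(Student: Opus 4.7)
The plan is to invert a smoothed Mellin transform of $F_a(z,r,\lambda)$ and shift the contour past the pole at $s = 5/6$ furnished by Lemma \ref{lem:sizeof_ha}. For a free parameter $H \ge 1$, I would take a smooth cutoff $W_H$ approximating $\mathbf{1}_{[0,1]}$ through a $C^2$ transition of width $1/H$; its Mellin transform satisfies $\tilde W_H(s) \ll 1/|s|$ for $|s| \le H$ and $\tilde W_H(s) \ll H/|s|^2$ for $|s| > H$. Writing
\[
S_H = \sum_b \tilde g_\lambda(r,b) W_H(N(b)/z) = \frac{1}{2\pi i} \int_{(\sigma_1)} \tilde W_H(s)\, z^s\, h_a(r,\lambda, s+\tfrac{1}{2})\, ds, \qquad \sigma_1 = 1+\varepsilon,
\]
I would control the discrepancy $F_a - S_H$ by summing $|\tilde g_\lambda(r,b)| \le 1$ (a consequence of $|g(r,b)|^2 = N(b)$ from Lemmas \ref{cubic-GS-lemma1}--\ref{cubic-GS-lemma2}) over the lattice points $b$ with $a \mid b$ and $N(b) \in [z(1-1/H),\, z(1+1/H)]$, obtaining $|F_a - S_H| \ll z H^{-1} N(a)^{-1}$. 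The factor $N(a)^{-1}$, coming from the divisibility $a \mid b$, is crucial for the final result.

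Next I would shift the line of integration from $\Re s = \sigma_1$ down to $\Re s = 1/2 + \varepsilon$. By Lemma \ref{lem:sizeof_ha}, the only pole crossed is the simple pole at $s = 5/6$, whose residue gives
\[
\tilde W_H(\tfrac{5}{6}) \cdot \frac{z^{5/6}}{5/6} \cdot \res_{s=5/6} h_a(r,\lambda, s+\tfrac{1}{2}) \ll z^{5/6+\varepsilon} N(a)^{-1+\varepsilon} N(r_1)^{-1/6+\varepsilon} N(r_3^\ast)^{\varepsilon},
\]
the first term of the proposition. On the new contour, the convexity estimate \eqref{bound-convexity} specialized to $\sigma_1 - \sigma = 1/2$ yields $|h_a(r,\lambda, 1+\varepsilon + it)| \ll N(r_1 r_2^2)^{1/4} N(a)^{-1/4} N(ar_1 r_3^\ast)^\varepsilon (1+|t|)^{1/2}$; splitting the vertical integral at $|t| = H$ and using the two regimes of $|\tilde W_H|$ described above, a short computation bounds it by $z^{1/2+\varepsilon} N(r)^{1/4+\varepsilon} N(a)^{-1/4} H^{1/2}$. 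Rapid decay of $\tilde W_H$ makes the horizontal segments at $|t| \to \infty$ vanish, justifying the shift.

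Collecting the three pieces, for every $H \ge 1$ the resulting estimate has the form $F_a(z,r,\lambda) \ll (\text{first term}) + A H^{1/2} + B H^{-1}$ with $A = z^{1/2+\varepsilon} N(r)^{1/4+\varepsilon} N(a)^{-1/4}$ and $B = z N(a)^{-1}$. I would finish by applying Lemma \ref{lemma-kolesnik} with $a_1 = 1/2$, $b_1 = 1$, $H_1 = 1$, $H_2 = \infty$: the geometric-mean contribution $(A^{b_1} B^{a_1})^{1/(a_1+b_1)} = (A B^{1/2})^{2/3}$ works out to $z^{2/3+\varepsilon} N(r)^{1/6+\varepsilon} N(a)^{-1/2}$ (the second term of the proposition), and the endpoint $A H_1^{1/2} = A$ supplies the third. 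The main technical point, and the part I expect to be delicate, is preserving the $N(a)^{-1}$ gain in the smoothing error; without this saving the optimization would only yield $N(a)^{-1/6}$ in place of the desired $N(a)^{-1/2}$, which is in turn what makes this estimate powerful enough downstream to push the support in Theorem \ref{thm-main} past the trivial range.
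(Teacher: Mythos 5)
Your overall plan—Mellin inversion (sharp or smoothly truncated), contour shift to $\Re s = 1/2+\varepsilon$ picking up the pole at $s=5/6$, and optimization of the truncation parameter via Lemma \ref{lemma-kolesnik}—matches the paper's skeleton, and your handling of the truncation error (preserving the crucial $N(a)^{-1}$ from $a\mid b$) and of the residue are both correct. However, there is a genuine gap in the bound you give for the vertical integral.

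You assert that the convexity estimate \eqref{bound-convexity} with $\sigma_1-\sigma = 1/2$ yields $|h_a(r,\lambda, 1+\varepsilon+it)| \ll N(r_1r_2^2)^{1/4} N(a)^{-1/4} N(ar_1r_3^\ast)^{\varepsilon}(1+|t|)^{1/2}$. But Lemma \ref{lem:sizeof_ha} gives the $t$-dependence $(1+t^2)^{\sigma_1-\sigma}$, which at $\sigma_1-\sigma=1/2$ is $(1+t^2)^{1/2}\asymp 1+|t|$, not $(1+|t|)^{1/2}$. Dividing by $|s|\asymp 1+|t|$, the integrand is $O(1)$, so the vertical integral is $\ll H$ rather than $H^{1/2}$, and your balance $AH^{1/2}+BH^{-1}$ degrades to $AH+BH^{-1}$. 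Optimizing the corrected expression gives $(AB)^{1/2}$ with $z$-exponent $3/4$, strictly weaker than the $z^{2/3}$ in the statement; the endgame does not close on convexity alone. Indeed, for a $C^2$ cutoff with $\widetilde W_H(s)\ll H/|s|^2$ the tail $\int_{|t|>H}(1+|t|)\cdot H/|t|^2\,dt$ does not even converge, so the proposed cutoff is also too crude.

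The missing ingredient is the second-moment estimate \eqref{IntegralMeanValuebound4h_a} of Lemma \ref{lem:sizeof_ha}, namely $\int_{-T}^{T}|h_a(r,\lambda,\sigma_1+it)|^2\,dt \ll T^2 N(a)^{-1/2-\varepsilon} N(r_1r_2^2r_3^\ast)^{1/2+2\varepsilon}$, which saves a full power of $T$ over the squared convexity bound (this is precisely the gain flagged in the remark after Lemma \ref{lem:from-HB}). Applying Cauchy--Schwarz and then integrating by parts against $1/s$ gives $\int_{-T}^{T}|h_a/s|\,dt \ll T^{1/2}N(a)^{-1/4-\varepsilon/2}N(r_1r_2^2r_3^\ast)^{1/4+\varepsilon}$, which is what produces the $T^{1/2}$ (equivalently $H^{1/2}$) dependence you wrote down. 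In the paper, the convexity estimate \eqref{bound-convexity} is reserved for the two horizontal segments of the shifted contour, where the extra factor of $T$ is harmless. With the mean-value input inserted, the rest of your computation and the final optimization via Lemma \ref{lemma-kolesnik} go through correctly.
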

\begin{proof}[Proof of Proposition \ref{Bound4F}.]
It follows from Perron's Formula (cf. \cite[Ch.17~p.105]{Davenport}) that for $z-\frac12 \in \Z^+$ and $\sigma_1 = 1+ \eps$,
\bsc{\label{FwithPerron1}
F_a(z,r,\lambda) &- \frac{1}{2\pi i} \int_{\sigma_1 - i T}^{\sigma_1 + i T} {h}_a (r,\lambda, s+{\textstyle \frac12}) \frac{z^s}{s} ds \\
&\ll \su{b \equiv 1 \mod 3 \\ a\mid b, (b,r)=1} (z/\N(b))^{\sigma_1} \min  \bigl(1, T^{-1} |\log (z/\N(b))|^{-1}\bigr) \\
&\ll_\eps T^{-1} z^{1+\eps} N(a)^{-1}.
}
Shifting the contour to $\re s = \frac12+\eps$, we pick up the possible residue of $h_a(r,\lambda,s)$ at $s=4/3$ by Lemma \ref{lem:sizeof_ha} and therefore get
\mult{\frac{1}{2\pi i} \int_{\sigma_1 - i T}^{\sigma_1 + i T} {h}_a (r,\lambda, s+{\textstyle \frac12}) \frac{z^s}{s} ds  = 
\frac {6z^{5/6}}5 \mathop{\res}_{s=5/6} {h}_a (r,\lambda,{\textstyle \frac12}) \\
+ \frac{1}{2\pi i} \biggl(\int_{\sigma_1 - i T}^{\sigma_1-1/2 - i T} + \int_{\sigma_1-1/2 + i T}^{\sigma_1 + i T} + \int_{\sigma_1-1/2 - i T}^{\sigma_1-1/2 + i T} \biggr) {h}_a (r,\lambda, s+{\textstyle \frac12}) \frac{z^s}{s} ds.
}
Using the convexity bound \eqref{bound-convexity} of Lemma \ref{lem:sizeof_ha},
we see that 
\bsc{\label{FwithPerron2}
\biggl( &\int_{\sigma_1 - i T}^{\sigma_1-1/2 - i T} + \int_{\sigma_1-1/2 + i T}^{\sigma_1 + i T} \biggr) {h}_a (r,\lambda, s+{\textstyle \frac12}) \frac{z^s}{s} ds \\
& \ll N(ar_1r_3^\ast )^\eps\int_{\sigma_1-1/2}^{\sigma_1} T^{2(\sigma_1-\sigma)-1} \N(r_1r_2^2)^{\frac 12 (\sigma_1-\sigma)} N(a )^{\frac 12 (\sigma_1-\sigma)-\sigma}  z^\sigma d\sigma \\
& \ll N(ar_1r_3^\ast )^\eps \bigl(T^{-1} N(a )^{-1-\eps}  z^{1+\eps} + \N(r_1r_2^2)^{1/4} N(a )^{-1/4-\eps}  z^{1/2+\eps}\bigr). }
By the mean value estimate \eqref{IntegralMeanValuebound4h_a} of Lemma \ref{lem:sizeof_ha}
 and Cauchy-Schwarz inequality, we obtain 
\eqs{
\int_{-T}^{T} \left| h_a(r,\lambda, \sigma_1+it) \right| dt \ll T^{3/2} N(a)^{-1/4-\eps/2} N(r_1r_2^2r_3^\ast)^{1/4+\eps},} so that
\eqs{\int_{-T}^{T}  \Big| \frac{h_a(r,\lambda, \sigma_1+it) }{\sigma_1+it}\Big| dt \ll T^{1/2} N(a)^{-1/4-\eps/2} N(r_1r_2^2r_3^\ast)^{1/4+\eps} }
on integrating by parts. Thus, 
\eqn{\label{FwithPerron3}
\int_{\sigma_1-1/2 - i T}^{\sigma_1-1/2 + i T}  {h}_a (r,\lambda, s+{\textstyle \frac12}) \frac{z^s}{s} ds \ll T^{1/2} z^{1/2+\eps} N(a)^{-1/4-\eps/2} N(r_1r_2^2r_3^\ast)^{1/4+\eps}.}

Combining \eqref{FwithPerron1}, \eqref{FwithPerron2} and \eqref{FwithPerron3} together with the bound on the residue from Lemma \ref{lem:sizeof_ha}, we obtain
\mult{F_a(z,r,\lambda) \ll N(ar_1r_3^\ast )^\eps \bigl( T^{1/2} z^{1/2+\eps} \N(r_1r_2^2)^{1/4} N(a )^{- 1/4 -\eps}  
\\
+ T^{-1} N(a )^{-1-\eps}  z^{1+\eps} + z^{5/6} N(a )^{-1} \N (r_1)^{-1/6} \bigr) 
}
where the last term is needed only when $r_2 = 1$.

Using Lemma \ref{lemma-kolesnik} with $T \in [1,z^{1/2}]$ to bound the first two terms inside the parentheses yields the desired estimate. 
\end{proof}

\begin{rem}
Using Lemma \ref{lemma-kolesnik}, we obtain automatically a result independent of the various parameters. It would have been equivalent to choosing 
\eqs{T=z^{1/3} N(a)^{{-1/2}} N(r_1 r_2^2)^{-1/6},}
which gives the bound
\eqs{F_a(z,r,\lambda) \ll N(ar_1r_3^\ast )^\eps \Bigl( z^{2/3+\eps} {N(a)^{-1/2}} N(r_1 r_2^2)^{1/6} + z^{5/6} N(a )^{-1} \N (r_1)^{-1/6} \Bigr)}
assuming that $T \ge 1$ i.e. $N(a)^3 N(r_1 r_2^2) \le z^2$, which is true since we are taking $N(a) \le z^{1/3}$ in Section \ref{section-PTT}.\end{rem}

The rest of the section is devoted to the proof of Lemma \ref{lem:sizeof_ha}. We first state intermediate results in lemmas  \ref{lem:ha(r,lambda,s)}, \ref{lem:Patterson} and \ref{lem:from-HB}.

Our first goal is to write ${h}_a(r,\lambda,s)$ in terms of the generating function \begin{equation}  \label{def-psi}
\psi(r, \lambda,s) = \su{b \equiv 1\mod 3} \lambda(b) g(r,b) \N(b)^{-s}, \end{equation}
which appears in the work of  Patterson; namely, in \cite{Patterson-78} when $\lambda$ is trivial, and \cite{general-patterson} for the general case including the ray class character $\lambda$, following the work of Kazhdan and Patterson in \cite{K-P}.

 We also define, for any prime $\pi \equiv 1 \mod 3$,
\eqs{\psi_\pi(r, \lambda,s) = \su{b \equiv 1\mod 3\\ (b, \pi)=1} \lambda(b) g(r,b) \N(b)^{-s}.}

We now express $h_a(r,\lambda,s)$ in terms of the function $\psi(r, \lambda,s).$
Our lemma is similar to \cite[Lemma 3.6]{BaYo}, or \cite[Lemma 3.11]{DFL-1} for the function field case, where the authors of those papers are dealing with slightly different functions.
\begin{lem} \label{lem:ha(r,lambda,s)}
Suppose $a \in \ring$ is square-free with $(a,r)=1$. Write $r=r_1r_2^2 r_3^3$ with $r_i\equiv 1\mod 3, r_1, r_2$ square-free and co-prime Let $r_3^\ast$ be the product of the primes dividing $r_3$ but not $r_1r_2$. Then, 
\bs{
h_a (r,\lambda, s) &= g(r, a ) \lambda(a ) N(a )^{-s} \prod_{\pi \mid ar_1r_2} (1- \lambda(\pi)^3 \N(\pi)^{2-3s} )^{-1}  \\
	&\qquad \times \su{d\equiv 1\mod 3\\ d \mid  r_3^\ast} \frac{\mu_K(d) \lambda(d)g(ar_1r_2^2,d)}{N(d)^s} \prod_{\pi \mid d} (1- \lambda(\pi)^3 \N(\pi)^{2-3s} )^{-1}\\
	& \qquad \times \su{c \equiv 1 \mod 3\\ c\mid dar_1} \mu_K(c) \N(c)^{1-2s} \lambda(c)^2 \overline{g(dar_1r_2^2/c,c)} \psi(dar_1r_2^2/c, \lambda,s). }
\end{lem}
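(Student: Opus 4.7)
The plan is to unfold $h_a(r,\lambda,s)$ into the Patterson series $\psi$ of \eqref{def-psi} by a sequence of multiplicative manipulations and Möbius inversions, in the same spirit as \cite[Lem.~3.6]{BaYo} and \cite[Lem.~3.11]{DFL-1}.

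The first step extracts the factor $\lambda(a)\,g(r,a)\,\N(a)^{-s}$. Writing $b=am$ in the defining sum of $h_a$ and observing that for any prime $\pi\mid a$ one has $v_\pi(r)=0$, Lemma~\ref{cubic-GS-lemma2} (applied with $j=0$) yields $g(r,\pi^k)=0$ for $k\ge 2$. Combined with Lemma~\ref{cubic-GS-lemma1} this forces $(m,a)=1$, and for such $m$ the factorization $g(r,am)=g(ra,m)\,g(r,a)$ reduces the problem to evaluating
\[
\Psi(s):=\sum_{\substack{m\equiv 1\bmod 3\\(m,ar)=1}}\lambda(m)\,g(ra,m)\,\N(m)^{-s}.
\]

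The rest of the proof analyses $\Psi(s)$ by removing the coprimality $(m,ar)=1$ prime-by-prime, separating the primes of $ar$ into three classes according to the factorization $ar=(ar_1)r_2^2 r_3^3$. The primes of $ar_1 r_2$ (of $r$-valuations $1$ and $2$) are handled by evaluating the finite local sums provided by Lemma~\ref{cubic-GS-lemma2} and re-absorbing them into the geometric-type Euler factor $\prod_{\pi\mid ar_1 r_2}(1-\lambda(\pi)^3\N(\pi)^{2-3s})^{-1}$ that appears outside the $d$-sum. The primes of $r_3^\ast$ (cube-power valuations) instead require the Möbius inversion $\mathbf{1}_{(m,r_3^\ast)=1}=\sum_{d\mid(m,r_3^\ast)}\mu_K(d)$; writing $m=dn$ and applying multiplicativity (Lemma~\ref{cubic-GS-lemma1}) to $g(ra,dn)$ extracts the prefactor $\lambda(d)\,g(ar_1 r_2^2,d)\,\N(d)^{-s}$ together with the associated local Euler factor $\prod_{\pi\mid d}(1-\lambda(\pi)^3\N(\pi)^{2-3s})^{-1}$, exactly as displayed in the lemma.

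At this stage the remaining unrestricted series is close to $\psi(ar_1 r_2^2 d,\lambda,s)$ but with the wrong first argument inside $g$. A final Möbius inversion over $c\mid dar_1$, combined with the identity $|g(R,n)|^2=\N(n)$ valid for squarefree $n$ coprime to $R$ (which lets one swap $g$ for $\overline{g}$ on one side while simultaneously shifting its first argument), produces the coefficient $\mu_K(c)\,\lambda(c)^2\,\N(c)^{1-2s}\,\overline{g(dar_1 r_2^2/c,\,c)}$ and identifies the inner sum as $\psi(dar_1 r_2^2/c,\lambda,s)$. The main obstacle is the combinatorial bookkeeping: the non-multiplicative twists $\overline{\chi_{n_2}}(n_1)$ from Lemma~\ref{cubic-GS-lemma1} must interact correctly with the vanishing pattern of $g(\cdot,\pi^k)$ from Lemma~\ref{cubic-GS-lemma2} across the three prime classes, and the two Möbius inversions (over $d\mid r_3^\ast$ and over $c\mid dar_1$) must commute cleanly so that the final first argument of $\psi$ collapses to precisely $dar_1 r_2^2/c$.
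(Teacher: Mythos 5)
Your high-level plan — extract $g(r,a)\lambda(a)\N(a)^{-s}$ via $b=ab'$ and Lemma~\ref{cubic-GS-lemma1}, strip the $r_3^3$ part since $\overline{\chi_b}(r_3^3)=1$, M\"obius-invert the coprimality to $r_3^*$ via $d\mid(b,r_3^*)$, and then reduce to a Patterson-type series — is the same as the paper's. But your account of the final step, where the Euler factors and the $c$-sum emerge, has several problems that amount to a gap.

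First, the ordering you propose does not work: you claim to ``re-absorb'' the primes of $ar_1r_2$ into the Euler factor $\prod_{\pi\mid ar_1r_2}(1-\lambda(\pi)^3\N(\pi)^{2-3s})^{-1}$ \emph{before} the $d$-M\"obius and before the $c$-M\"obius, and then treat the $c$-sum as a separate later step. In fact the Euler factor over $\pi\mid ar_1$ and the $c$-sum over $c\mid dar_1$ are inseparable pieces of a single identity: after the $d$-M\"obius the remaining series is $\sum_{(b,adr_1r_2^2)=1}\lambda(b)g(adr_1r_2^2,b)\N(b)^{-s}$, and the paper applies \cite[Lemma~3.6]{BaYo} (with their $r_1$ replaced by $adr_1$) to produce $\prod_{\pi\mid r_2}$, $\prod_{\pi\mid adr_1}$, and the M\"obius sum over $c\mid adr_1$ \emph{simultaneously}; the lemma's statement then merely regroups $\prod_{\pi\mid adr_1r_2}$ as $\prod_{\pi\mid ar_1r_2}\cdot\prod_{\pi\mid d}$. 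Relatedly, the $d$-M\"obius inversion alone does not ``extract the associated local Euler factor $\prod_{\pi\mid d}$'' as you assert; M\"obius detection produces no Euler factor, and that factor also comes from the Baier--Young step since $d$ is part of $adr_1$. Also, the primes of $a$ have $r$-valuation $0$, not $1$ or $2$, so your parenthetical classification of the primes of $ar_1r_2$ is off.

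Second, the identity $|g(R,n)|^2=\N(n)$ is not what produces $\overline{g(dar_1r_2^2/c,c)}$. The conjugate arises directly from Lemma~\ref{cubic-GS-lemma2}: for $\pi\mid c\mid adr_1$ (so $v_\pi(\text{shift})=1$) the only nonzero contribution with $\pi\mid b$ has $\pi^2\|b$, and $g(r'\pi,\pi^2)=\N(\pi)\,\overline{g(r',\pi)}$ because $k=2\equiv 2\bmod 3$. That conjugation, pushed through Lemma~\ref{cubic-GS-lemma1} and the $c$-M\"obius (with $b=c^2b'$), is what yields the factor $\mu_K(c)\lambda(c)^2\N(c)^{1-2s}\overline{g(dar_1r_2^2/c,c)}$. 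Since you cite \cite[Lemma~3.6]{BaYo} as the model but then attempt to re-derive it with the wrong mechanism and in an order that does not decouple, the proposal as written has a genuine gap at precisely the step that carries the combinatorial content; the cleanest repair is to do the $d$-M\"obius first and then invoke \cite[Lemma~3.6]{BaYo} with $r_1\to adr_1$ verbatim, as the paper does.
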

\begin{proof}
Recall that when $(r, b)=1$, $g(r,b)=0$ when $b$ is not square-free by Lemmata \ref{cubic-GS-lemma1} and \ref{cubic-GS-lemma2}. Then, rewriting  $b$ in the sum ${h}_a(r,\lambda,s)$ as $b=ab'$ with $b'\equiv 1 \mod 3$ and co-prime to $a$, and using Lemma \ref{cubic-GS-lemma1}, this yields
\bs{{{h}}_a (r,\lambda, s) &=
 \su{b \equiv 1 \mod 3 \\ a\mid b, \; (b, r)=1} \lambda(b) g(r,b) N(b)^{-s} \\
&= g(r, a ) \lambda(a ) N(a )^{-s}  \su{b \equiv 1 \mod 3 \\ (b, ar)=1} \lambda(b) g(ar, b) N(b)^{-s}.}
Since $(b,r)=1$ in the above sum, $g(ar, b) = \overline{\chi_b (r_3^3)} g(ar_1r_2^2,b) = g(ar_1r_2^2,b)$. Using also $(a,r)=1$, it follows that
\bs{\su{b \equiv 1 \mod 3 \\ (b, ar)=1} & \lambda(b) g(ar, b) N(b)^{-s} \\
&= \su{b \equiv 1 \mod 3 \\ (b, ar_1r_2^2)=1} \lambda(b) g(ar_1r_2^2,b) N(b)^{-s} \su{d\equiv 1\mod 3\\d \mid (b,r_3^\ast)} \mu_K(d)\\
&= \su{d\equiv 1\mod 3\\ d \mid  r_3^\ast} \frac{\mu_K(d) \lambda(d)}{N(d)^s} \su{b \equiv 1 \mod 3 \\ (bd, ar_1r_2^2)=1} \lambda(b) g(ar_1r_2^2,bd) N(b)^{-s} \\
&= \su{d\equiv 1\mod 3\\ d \mid  r_3^\ast} \frac{\mu_K(d) \lambda(d)}{N(d)^s} \su{b \equiv 1 \mod 3 \\ (b, adr_1r_2^2)=1} \lambda(b) g(ar_1r_2^2,bd) N(b)^{-s} \\
&= \su{d\equiv 1\mod 3\\ d \mid  r_3^\ast} \frac{\mu_K(d) \lambda(d)g(ar_1r_2^2,d)}{N(d)^s} \su{b \equiv 1 \mod 3 \\ (b, adr_1r_2^2)=1} \lambda(b) g(adr_1r_2^2,b) N(b)^{-s},
}
using again lemmas \ref{cubic-GS-lemma1} and \ref{cubic-GS-lemma2}.
Since $adr_1r_2$ is square-free (recall that $d \mid r_3^*$),  it follows from \cite[Lemma 3.6]{BaYo} with $r_1$ replaced by $adr_1$ that 
\bs{
\su{b \equiv 1 \mod 3 \\ (b, adr_1r_2^2)=1} & \lambda(b) g(adr_1r_2^2,b) N(b)^{-s} \\
&= \prod_{\pi \mid r_2} (1- \lambda(\pi)^3 \N(\pi)^{2-3s} )^{-1} \su{b \equiv 1 \mod 3\\ (b,adr_1)=1} \lambda(b) g(adr_1r_2^2,b) \N(b)^{-s} \\
&= \prod_{\pi \mid r_2} (1- \lambda(\pi)^3 \N(\pi)^{2-3s} )^{-1} \prod_{\pi \mid adr_1} (1- \lambda(\pi)^3 \N(\pi)^{2-3s} )^{-1} \\
&\qquad \times \su{c \equiv 1 \mod 3\\ c\mid adr_1} \mu_K((c)) \N(c)^{1-2s} \lambda(c)^2 \overline{g\left(\frac{adr_1r_2^2}{c},c\right)} \psi\left(\frac{adr_1r_2^2}{c}, \lambda,s\right).}
Combining all of the above, we arrive at the desired result.
\end{proof}

Next, we need to understand $h_a (r,\lambda,s+\frac12)$ in the strip $1/2+\eps \le \sigma \le 1+\eps$. 

\begin{lem}[Lemma p. 200 \cite{general-patterson}] \label{lem:Patterson} 
Let $r \in \Z[\omega]$. Then, ${\psi}(r,\lambda,s)$ can be meromorphically continued to the whole complex plane; it is entire for $\Re(s) > 1$, except possibly for a simple pole at $s=4/3$ with residue $\rho(r, \lambda)$ (which can occur only when $\lambda^3$ is principal). Write $r= r_1 r_2^2 r_3^3$, where $r_1, r_2$ are square-free and co-prime, and $r_i \equiv 1 \mod 3$. Then $\rho(r, \lambda)=0$ if $r_2 \neq 1$, and 
\eqs{\rho(r, \lambda) \ll N(r_1)^{-1/6},}
when $r_2 = 1$.

Let $\varepsilon > 0$, and $\sigma_1 = 3/2 + \varepsilon$. If $s =\sigma+it$,  $\sigma_1 -1/2 < \sigma < \sigma_1$, and $|s-4/3|> 1/6$, then
\eqs{{\psi}(r,\lambda,s) \ll  N(r)^{\frac 12(\sigma_1-\sigma)} (1+t^2)^{\sigma_1 - \sigma},}
where both bounds above are dependent on the conductor of the character $\lambda$.
\end{lem}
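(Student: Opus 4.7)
The statement is attributed to the Kubota--Patterson--Kazhdan--Patterson theory of the cubic metaplectic Eisenstein series, so my proof plan is essentially to reconstruct how the lemma is extracted from that theory rather than to prove it from scratch. The key identification is that $\psi(r,\lambda,s)$, up to archimedean and elementary factors, is the $r$-th Whittaker--Fourier coefficient of a cubic metaplectic Eisenstein series $E_\lambda(g,s)$ on the threefold cover of $GL_2$ over $K=\Q(\omega)$. Indeed, unfolding the Eisenstein series against the additive character $\e(\tr(r\cdot))$ produces, after a standard computation, a Dirichlet series whose $b$-th coefficient is $\lambda(b) g(r,b)$; this is precisely $\psi(r,\lambda,s)$.

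From this identification, meromorphic continuation and the functional equation $s \leftrightarrow 2-s$ for $\psi(r,\lambda,s)$ follow from the Langlands--Selberg theory of Eisenstein series, which in the cubic metaplectic setting was carried out by Kubota and in the generality with the character $\lambda$ by Patterson in \cite{general-patterson}. In the half-plane $\Re(s)>1$, the only residual spectrum that can contribute is the Patterson--Kubota cubic theta function, whose pole in the present normalization sits at $s=4/3$. Hence $\psi(r,\lambda,s)$ is holomorphic for $\Re(s)>1$ except for a possible simple pole at $s=4/3$, and this residue vanishes unless $\lambda^3$ is principal, since otherwise the central character on the residual piece does not match.

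The residue $\rho(r,\lambda)$ is, up to an explicit constant, the $r$-th Fourier coefficient of the cubic theta function twisted by $\lambda$. Two facts from Patterson's structural analysis are then invoked: first, these Fourier coefficients are supported on $r$ whose cube-free part is squarefree, i.e.\ on $r$ with $r_2=1$ in the decomposition $r=r_1r_2^2r_3^3$, giving the vanishing statement; second, when $r_2=1$ the Heath-Brown--Patterson bound \cite{HBP1979} on cubic theta coefficients at squarefree arguments yields $\rho(r,\lambda)\ll N(r_1)^{-1/6}$. This is the deepest input to the lemma.

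For the convexity bound, I would work as follows. On the line $\Re(s)=\sigma_1=3/2+\varepsilon$, the Weil bound $|g(r,b)|\le N(b)^{1/2}$ for squarefree $b$ coprime to $r$ yields absolute convergence and $\psi(r,\lambda,s)\ll 1$. On the dual line $\Re(s)=\sigma_1-1/2+\varepsilon'$, the functional equation, together with Stirling for the archimedean gamma factors and the trivial dual bound, gives $\psi(r,\lambda,s)\ll N(r)^{1/2}(1+|t|)^{1}$. Applying Phragmén--Lindel\"of in the strip $\sigma_1-1/2\le\Re(s)\le\sigma_1$, after cutting out a small disc around the pole $s=4/3$ (the constraint $|s-4/3|>1/6$), yields the linear interpolation $\psi(r,\lambda,s)\ll N(r)^{(\sigma_1-\sigma)/2}(1+t^2)^{\sigma_1-\sigma}$.

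The hard step is neither the continuation nor the convexity interpolation but the residue bound at $s=4/3$: this is the Heath-Brown--Patterson theorem for the size of cubic theta Fourier coefficients, and tracking the dependence on the squarefree decomposition of $r$ through their argument is the only non-routine point. Everything else is a standard exercise in metaplectic Eisenstein series once the identification in the first paragraph is made.
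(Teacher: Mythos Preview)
The paper does not give its own proof of this lemma: it is quoted verbatim as ``Lemma p.~200 \cite{general-patterson}'' and used as a black box, with the remark that the inclusion of the ray-class character $\lambda$ is handled by the generalisations of \cite{K-P,general-patterson}. Your sketch is a faithful outline of the argument behind that citation (identify $\psi(r,\lambda,s)$ as a Whittaker coefficient of the cubic metaplectic Eisenstein series, inherit continuation and the functional equation, locate the unique pole at $s=4/3$ on the residual cubic theta function, read off the support and size of the residue from Patterson's explicit computation of its Fourier coefficients, and then interpolate by Phragm\'en--Lindel\"of), so there is no discrepancy in approach to report.

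Two small points are worth tightening. First, the residue bound $\rho(r,\lambda)\ll N(r_1)^{-1/6}$ is already an explicit identity in \cite{HBP1979,general-patterson} (the $r$-th coefficient of the cubic theta function is computed in closed form and carries a factor $N(r_1)^{-1/6}$); it is not an application of a separate Heath-Brown--Patterson \emph{estimate}, so calling it ``the deepest input'' slightly overstates what is needed here. Second, your Phragm\'en--Lindel\"of endpoints are off: the functional equation is $s\leftrightarrow 2-s$, so the line dual to $\sigma_1=3/2+\varepsilon$ is $\Re(s)=1/2-\varepsilon$, not $\sigma_1-1/2$; one interpolates across the full strip and then restricts to $\sigma_1-1/2<\sigma<\sigma_1$. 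Neither point affects the correctness of your plan.
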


The convexity bound of the above lemma can be used to bound the integrands involved in estimating $F_a(z,r,\lambda)$, as was used in \cite{HBP1979}. Again, we are adapting the further work of \cite{HB2000} to get better bounds, by replacing pointwise bounds on the integrands by mean value bounds. Our starting point is the following lemma, which corresponds to equation (20) of \cite{HB2000} with the difference that we are considering the function $\psi(r,\lambda,s)$ defined in \eqref{def-psi}, and Heath-Brown considers only the case where $\lambda$ is trivial. The proof of the general case is identical, using the generalisations of \cite{K-P, general-patterson}.

\begin{lem}[{\cite[Equation 20]{HB2000}}] \label{lem:from-HB}
\eqs{
\int_{-T}^{T} \left\vert \psi(r,\lambda,1+\eps+it) \right\vert^2 dt \ll T^2 N(r)^{1/2}.}
\end{lem}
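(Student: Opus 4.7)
The plan is to adapt the mean-square argument of \cite[(20)]{HB2000}, which proves this estimate verbatim in the untwisted case $\lambda = 1$. All of the ingredients generalize: the metaplectic formalism of Kazhdan-Patterson \cite{K-P}, extended to twists by a ray-class character in \cite{general-patterson}, yields for $\psi(r,\lambda,s)$ a functional equation structurally identical to the one used in the untwisted case, with the conductor of $\lambda$ entering only through factors of bounded size -- acceptable since the implied constant in the lemma is allowed to depend on $\lambda$.

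Starting from that functional equation and Stirling's formula, I would first derive an approximate functional equation of the form
\begin{equation*}
\psi(r,\lambda,1+\eps+it) = \sum_{b \equiv 1 \mod 3} \frac{\lambda(b) g(r,b)}{\N(b)^{1+\eps+it}}\, V\!\left(\frac{\N(b)}{Y}\right) + (\text{dual sum}) + O_{r,\lambda,\eps}\bigl(T^{-A}\bigr),
\end{equation*}
with the cutoff $Y \asymp T\,\N(r)^{1/2}$ chosen to balance the two pieces, reflecting the analytic conductor of $\psi(r,\lambda,s)$ being polynomial in $\N(r)$ and $1+|t|$. I would then square and integrate against $t \in [-T,T]$ and invoke the Montgomery-Vaughan mean-value theorem for Dirichlet polynomials over $\ring$,
\begin{equation*}
\int_{-T}^{T} \Bigl|\sum_{\N(b) \le Y} a_b\, \N(b)^{-it}\Bigr|^2 dt \ll (T+Y) \sum_{\N(b) \le Y} |a_b|^2.
\end{equation*}
Applying this with $a_b = \lambda(b) g(r,b)/\N(b)^{1+\eps}$ and factoring each $b$ into a piece coprime to $r$ and a piece supported on the primes dividing $r$ via Lemmas \ref{cubic-GS-lemma1} and \ref{cubic-GS-lemma2}, the squared coefficient sum is controlled essentially independently of $\N(r)$, up to an acceptable $\N(r)^\eps$ loss. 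Combining with $Y \asymp T\,\N(r)^{1/2}$ produces the bound $T^2 \N(r)^{1/2}$ once $\eps$ is absorbed through a standard smoothing of the AFE cutoff; the dual sum is treated symmetrically using the convexity bound of Lemma \ref{lem:Patterson} on the scattering factor.

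The hardest step is the quantitative derivation of the approximate functional equation, particularly the explicit dependence of the scattering factor on $\N(r)$ when $r$ is not square-free; this requires a careful unpacking of the metaplectic machinery of \cite{K-P,general-patterson} and constitutes the bulk of the technical work carried out in \cite{HB2000}. Once this AFE is in hand, the remaining mean-value computation runs mechanically and mirrors the untwisted case step by step.
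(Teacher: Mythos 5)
The paper gives no proof of this lemma at all: it cites \cite[Equation (20)]{HB2000} directly and notes that the extension to a nontrivial ray-class twist $\lambda$ ``is identical, using the generalisations of \cite{K-P,general-patterson}.'' So your proposal is not an alternative route but an attempt to reconstruct Heath-Brown's underlying argument, which is fine in principle; the issue is that the reconstruction does not hold together numerically.

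The cutoff is wrong, and your own arithmetic contradicts it. The convexity bound of Lemma \ref{lem:Patterson} at $\sigma = 1+\eps$ reads $\psi(r,\lambda,1+\eps+it) \ll N(r)^{1/4}(1+|t|)$, which forces the analytic conductor near $\sigma=1$ to be $Q \asymp N(r)(1+|t|)^{4}$ (this is the expected degree $2\times 2$ in the $t$-aspect for a metaplectic $\mathrm{GL}_2$ object over the quadratic field $\Q(\omega)$). Any approximate functional equation therefore balances at length $Y \asymp Q^{1/2} \asymp T^{2} N(r)^{1/2}$, not $T\,N(r)^{1/2}$ as you write. With your shorter $Y$ the AFE does not close, and, even setting that aside, the Montgomery--Vaughan bound you invoke gives
\begin{equation*}
\int_{-T}^{T}\Bigl|\sum_{N(b)\le Y} a_b\, N(b)^{-it}\Bigr|^{2}dt \ll (T+Y)\sum_{N(b)\le Y}|a_b|^{2} \ll \bigl(T + T\,N(r)^{1/2}\bigr) N(r)^{\eps} \ll T\,N(r)^{1/2+\eps},
\end{equation*}
which is \emph{not} the $T^{2}N(r)^{1/2}$ you assert follows; the claim that ``Combining with $Y\asymp T N(r)^{1/2}$ produces the bound $T^{2}N(r)^{1/2}$'' is a non sequitur. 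The consistent chain is $Y\asymp T^{2}N(r)^{1/2}$, $\sum_b|a_b|^{2}\ll N(r)^{\eps}$ (using $|g(r,b)|\asymp N(b)^{1/2}$ for square-free $b$ coprime to $r$ and factoring the part of $b$ supported on $r$ via Lemmas \ref{cubic-GS-lemma1}--\ref{cubic-GS-lemma2}), and then MV yields $(T + T^{2}N(r)^{1/2})N(r)^{\eps}\ll T^{2}N(r)^{1/2+\eps}$. Both the cutoff and the subsequent product need to be corrected before the sketch closes; as written it neither establishes the AFE nor derives the stated mean-square bound from it.
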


We remark that using the convexity bound of Lemma \ref{lem:Patterson} would lead to the weaker bound
$$
\int_{-T}^{T} \left\vert \psi(r,\lambda,1+\eps+it) \right\vert^2 dt \ll \int_{-T}^{T} \left\vert N(r)^{1/4}   t \right\vert^{2} \; dt \ll T^{3} N(r)^{1/2}.
$$

Combining the previous three lemmas we arrive at the following result for the function
$h_a(r, \lambda, s)$. 

\begin{proof}[Proof of Lemma \ref{lem:sizeof_ha}]
By Lemma \ref{lem:ha(r,lambda,s)}
\mult{
h_a (r,\lambda, s+\textstyle{\frac12}) = g(r, a ) \lambda(a ) N(a )^{-s-1/2} \prod_{\pi \mid ar_1r_2} (1- \lambda(\pi)^3 \N(\pi)^{1/2-3s} )^{-1}  \\
\times  \su{d\equiv 1\mod 3\\ d \mid  r_3^\ast} \frac{\mu_K((d)) \lambda(d)g(ar_1r_2^2,d)}{N(d)^{s+1/2}} \prod_{\pi \mid d} (1- \lambda(\pi)^3 \N(\pi)^{1/2-3s} )^{-1}\\
\times \su{c \equiv 1 \mod 3\\ c\mid dar_1} \mu_K((c)) \N(c)^{-2s} \lambda(c)^2 \overline{g(dar_1r_2^2/c,c)} \psi(dar_1r_2^2/c, \lambda,s+\frac12). }
Hence,
\mult{
\mathop{\res}_{s=5/6} h_a (r,\lambda,s+\textstyle{\frac12}) = g(r, a ) \lambda(a ) N(a )^{-4/3} \prod_{\pi \mid ar_1} (1- \lambda(\pi)^3 \N(\pi)^{-2} )^{-1}  \\
\times \su{d\equiv 1\mod 3\\ d \mid  r_3^\ast} \frac{\mu_K(d) \lambda(d)g(ar_1,d)}{N(d)^{4/3}} \prod_{\pi \mid d} (1- \lambda(\pi)^3 \N(\pi)^{-2} )^{-1}\\
\times \su{c \equiv 1 \mod 3\\ c\mid dar_1} \mu_K(c) \N(c)^{-5/3} \lambda(c)^2 \overline{g(dar_1/c,c)} \rho (dar_1/c, \lambda),}
which gives 
\bs{\mathop{\res}_{s=5/6} h_a (r,\lambda,s+\textstyle{\frac12}) &  \ll 
N(a )^{-1} \N (r_1)^{-1/6}\su{d\equiv 1\mod 3\\ d \mid  r_3^\ast} N(d)^{-1} \su{c \equiv 1 \mod 3\\ c\mid dar_1} \N(c)^{-1} \\
&\ll N(a )^{-1} \N (r_1)^{-1/6} \log 2\N(ar_1) \log 2\N(r_3^\ast).}
Using again Lemma \ref{lem:ha(r,lambda,s)}, we have for $s = \sigma+it$ as in the hypotheses,
\bs{
h_a (r,\lambda, s+\textstyle{\frac12}) &\ll \N(r_1r_2^2)^{\frac 12 (\sigma_1-\sigma)} N(a )^{\frac 12 (\sigma_1-\sigma)-\sigma} (1+t^2)^{\sigma_1-\sigma}\\
	& \qquad \times \su{d\equiv 1\mod 3\\ d \mid  r_3^\ast} N(d)^{\frac 12 (\sigma_1-\sigma)-\sigma} \su{c \equiv 1 \mod 3\\ c\mid dar_1} \N(c)^{\frac 12-2\sigma-\frac 12 (\sigma_1-\sigma)} \\
&\ll \N(r_1r_2^2)^{\frac 12 (\sigma_1-\sigma)} N(a )^{\frac 12 (\sigma_1-\sigma)-\sigma} N(ar_1r_3^\ast )^\eps (1+t^2)^{\sigma_1-\sigma},}
which proves \eqref{bound-convexity}. 
We now proceed to prove \eqref{IntegralMeanValuebound4h_a}. Again,  by Lemma \ref{lem:ha(r,lambda,s)},
\bs{
	h_a (r,\lambda, 1+\eps+it) &\ll_\eps N(a)^{-1/2-\eps} \su{d\equiv 1\mod 3\\ d \mid  r_3^\ast} \frac{|\mu_K(d)|}{N(d)^{1/2+\eps}} \\
	& \qquad \times \su{c \equiv 1 \mod 3\\ c\mid dar_1} |\mu_K(c)| \, \N(c)^{-1/2-2\eps}  \, \left|\psi(dar_1r_2^2/c, \lambda,\sigma_1+it) \right|. }
Using Cauchy-Schwarz twice, we bound $|h_a (r,\lambda, 1+\eps+it)|^2$  by
\bs{&\ll_\eps N(a)^{-1-2\eps} 
	\su{d\equiv 1\mod 3\\ d \mid  r_3^\ast} \frac{|\mu_K(d)|^2}{N(d)^{1+2\eps}} 
	 \su{d\equiv 1\mod 3\\ d \mid  r_3^\ast} \bigg| \su{c \equiv 1 \mod 3\\ c\mid dar_1} \frac{|\mu_K(c)|}{\N(c)^{1/2+2\eps}} |\psi(dar_1r_2^2/c, \lambda,\sigma_1+it)| \bigg|^2 \\
	&\ll_\eps N(a)^{-1-2\eps} 
	\su{d\equiv 1\mod 3\\ d \mid  r_3^\ast} \frac{|\mu_K(d)|^2}{N(d)^{1+2\eps}} \\
& \qquad \times \su{d\equiv 1\mod 3\\ d \mid  r_3^\ast} \biggl( \su{c \equiv 1 \mod 3\\ c\mid dar_1} 
	\frac{|\mu_K(c)|^2}{\N(c)^{1+4\eps}}  \su{c \equiv 1 \mod 3\\ c\mid dar_1} |\psi(dar_1r_2^2/c, \lambda,\sigma_1+it)|^2\biggr) \\
	&\ll_\eps N(a)^{-1-2\eps} \su{d\equiv 1\mod 3\\ d \mid  r_3^\ast} \su{c \equiv 1 \mod 3\\ c\mid dar_1} |\psi(dar_1r_2^2/c, \lambda,\sigma_1+it)|^2. }
Using Lemma \ref{lem:from-HB}, this gives
\bs{\int_{-T}^{T} | h_a(r,\lambda, \sigma_1+it) |^2 dt 
&\ll T^2 N(a)^{-1-2\eps} N(ar_1r_2^2)^{1/2} \su{d\equiv 1\mod 3\\ d \mid  r_3^\ast} N(d)^{1/2} \su{c \equiv 1 \mod 3\\ c\mid dar_1}  N(c)^{-1/2} \\
&\ll T^2 N(a)^{-1/2-\eps} N(r_1r_2^2r_3^\ast)^{1/2+2\eps},}
which proves \eqref{IntegralMeanValuebound4h_a}.
\end{proof}

\section{A positive proportion of non-vanishing} \label{section-non-vanishing}
To prove Corollary \ref{coro-non-vanishing}, we choose 
$
\phi(x) = \phi_v (x) = \displaystyle \Bigl( \frac{\sin(\pi v x)}{\pi v x} \Bigr)^2.
$
Then, 
\eqs{
\phat_v(t) =  \left\{ \begin{array}{l@{\quad}l} \displaystyle \frac{v-|t|}{v^2} & \text{if $|t| \leq v$} \\    \\ 0 & \text{otherwise} \end{array} \right.
}
is supported on $(-v, v)$. 

For $m \in \Z$, $m \ge 0$, let 
\begin{align*}
p_m(X) &= \frac{1}{\sA_{\sF_3'} (X) } \sum_{\chi \in {\sF_3'}} w\Bigl( \frac{\N(\cond(\chi))}{X} \Bigr) \delta(\chi; m) \\
\delta(\chi; m) &= \begin{cases} 1 &  \mbox{if ord}_{s=\frac12} L(s, \chi) = m \\ 0 & \mbox{otherwise.} \end{cases}
\end{align*} 
Since  $\phi_v(0)=1$ and $\phi_v(x) \geq 0$ for all $x \in \R$ and the zeros are counted with multiplicity, we have (under GRH)
\begin{align*} \frac{1}{\sA_{\sF_3'} (X) }  &\sum_{\chi \in \sF_3'} w\Bigl( \frac{\N(\cond(\chi))}{X} \Bigr)  \sum_{m=1}^\infty %m 
p_m(X) \\
&\le \frac{1}{\sA_{\sF_3'} (X) }  \sum_{\chi \in \sF_3'} w\Bigl( \frac{\N(\cond(\chi))}{X} \Bigr)  \sum_{m=1}^\infty m p_m(X) \\
&\le \frac{1}{\sA_{\sF_3'} (X) } \sum_{\chi \in \sF_3'} w\Bigl( \frac{\N(\cond(\chi))}{X} \Bigr) 
\sum_{\substack{\rho=\frac12 + i \gamma\\ L(\rho, \chi)=0}}  \phi_v \Bigl( \frac{\gamma \log{X}}{2 \pi} \Bigr) = \mathcal{D}(X; \phi_v, \sF_3').
\end{align*}
Since $\sum_{m\ge 0} p_m (X) = 1$,  this yields 
\eqs{p_0(X) = \frac{1}{\sA_{\sF_3'} (X) } \sum_{\substack{\chi \in \sF_3' \\ L(\frac12, \chi) \neq 0}} w\Bigl( \frac{\N(\cond(\chi))}{X} \Bigr) 
 \ge 1 -  \mathcal{D}(X; \phi_v, \sF_3') \ge 1 - \phat_v(0) +  o_X (1),}
 where the last inequality follows from Theorem  \ref{thm-main}.
 This proves a weighted version of Corollary \ref{coro-non-vanishing}. We can easily re-state this as a counting version by choosing $w$ as follows.
{Assume $X\in \Z$, and let
\eqs{w(t)= \begin{cases}
1 & 0 \le t \le 1\\ \exp\bigl(1-1/(1-X^2(x-1)^2)\bigr) & 1 < x < 1+1/X\\
0 & t \ge 1+1/X.
\end{cases}
}
Then, $w$ is smooth on $[0,\infty)$ and $\sA_{\sF_3'} (X)$ counts exactly the characters $\chi \in \sF_3'$ with $\N(\cond(\chi)) \le X$. Hence we conclude that 
\bs{\frac{ \# \{ \chi \in \sA_{\sF_3'} : \N(\cond(\chi)) \le X, L(1/2,\chi) \neq 0 \}}{\# \{ \chi \in \sA_{\sF_3'} : \N(\cond(\chi)) \le X \}} &= \frac{1}{\sA_{\sF_3'} (X) } \su{\chi \in \sF_3' \\ L(\frac12, \chi) \neq 0} w\Bigl( \frac{\N(\cond(\chi))}{X} \Bigr)    \\
&\ge 1 - \phat_v(0) +  o_X (1),}
and letting $X \to \infty$ over the integers,}
we have 
\bs{\frac{ \# \{ \chi \in \sA_{\sF_3'} : \N(\cond(\chi)) \le X, L(1/2,\chi) \neq 0 \}}{\# \{ \chi \in \sA_{\sF_3'} : \N(\cond(\chi)) \le X \}} 
 &\ge 1 - \phat_v(0) + {o}_X(1) \\
&\to 1 - \frac{11}{13} = \frac 2{13}.}

%\nocite{*}

\bibliographystyle{amsplain}

\end{document}